\documentclass[11pt, reqno]{amsart}
\usepackage{amssymb, amsthm, amsmath, amsfonts,mathtools}
\usepackage{array, epsfig}
\usepackage{bbm}
\usepackage{hyperref}
\usepackage{url}
\usepackage[numbers,square]{natbib}
\usepackage{color}
\allowdisplaybreaks

\numberwithin{equation}{section}
  \evensidemargin
\oddsidemargin
\DeclareMathOperator{\Row}{Row}

\newcommand{\rank}{\mathop{\mathrm{rank}}\nolimits}

\newcommand{\bfA}{\mathbf{A}}
\newcommand{\bfB}{\mathbf{B}}
\newcommand{\bfG}{\mathbf{G}}
\newcommand{\bfH}{\mathbf{H}}

\newcommand{\bS}{\mathbb{S}}

\newcommand{\cW}{\mathcal{W}}

\newcommand{\cP}{\mathcal{P}}

\newcommand{\cS}{\mathcal{S}}

\DeclareMathOperator*{\Ker}{Ker}

\def\dint{\textup{d}}

\newcommand{\E}{\mathbb E}

\newcommand{\R}{\mathbb{R}}
\newcommand{\N}{\mathbb{N}}

\newcommand{\C}{\mathbb{C}}

\renewcommand{\P}{\mathbb{P}}

\renewcommand{\Im}{\operatorname{Im}}

\newcommand{\Int}{\mathop{\mathrm{Int}}\nolimits}
\newcommand{\Var}{\mathop{\mathrm{Var}}\nolimits}

\newcommand{\pos}{\mathop{\mathrm{pos}}\nolimits}
\newcommand{\lin}{\mathop{\mathrm{lin}}\nolimits}

\newcommand{\eps}{\varepsilon}
\newcommand{\eqdistr}{\stackrel{d}{=}}

\newcommand{\todistr}{\overset{w}{\underset{n\to\infty}\longrightarrow}}

\newcommand{\bsl}{\backslash}
\newcommand{\ind}{\mathbbm{1}}

\newcommand{\dd}{{\rm d}}
\newcommand{\eee}{{\rm e}}

\theoremstyle{plain}
\newtheorem{theorem}{Theorem}[section]
\newtheorem{lemma}[theorem]{Lemma}
\newtheorem{corollary}[theorem]{Corollary}
\newtheorem{proposition}[theorem]{Proposition}
\newtheorem{conjecture}[theorem]{Conjecture}

\theoremstyle{definition}
\newtheorem{definition}[theorem]{Definition}
\newtheorem{example}[theorem]{Example}

\theoremstyle{remark}
\newtheorem{remark}[theorem]{Remark}

\makeatletter
\@namedef{subjclassname@2020}{\textup{2020} Mathematics Subject Classification}
\makeatother

\begin{document}

\author{Zakhar Kabluchko}
\address{Zakhar Kabluchko: Institut f\"ur Mathematische Stochastik,
Universit\"at M\"unster,
Orl\'eans-Ring~10,
48149 M\"unster, Germany}
\email{zakhar.kabluchko@uni-muenster.de}

%\title[Stochastic Gale duality]{Stochastic Gale duality: Moments of random angles and random polytopes with $d+2$ vertices}
\title[Random Polyhedral Cones]{Random Polyhedral Cones I: \\ Distributional Results via Gale Duality}

\keywords{Stochastic geometry, geometric probability, random polyhedral cones, random spherical polytopes, solid angles, Gale duality,  spherical Sylvester problem, $U$-statistics, Funk--Hecke formula, spherical harmonics}

\subjclass[2020]{Primary: 60D05, 52A22; Secondary: 52A55, 52B11, 52B35, 52B05, 33C55, 60F05}  %I checked this

\begin{abstract}
Let $U_1,\ldots,U_n$ be independent random vectors uniformly distributed on the unit sphere
$\mathbb S^{d-1}\subseteq\mathbb R^d$, where $n\ge d$, and consider the random polyhedral cone
\[
\mathcal W_{n,d}:=\mathop{\mathrm{pos}} (U_1,\ldots,U_n) = \{\lambda_1 U_1+ \ldots + \lambda_n U_n: \lambda_1\geq 0, \ldots, \lambda_n \geq 0\}.
\]
We establish several distributional results for $\mathcal W_{n,d}$ and the associated spherical
polytope $\mathcal W_{n,d}\cap\mathbb S^{d-1}$. Our main contributions include:
\begin{itemize}
\item[(i)] Let $\alpha_d$ denote the solid angle of $\mathcal W_{d,d}$ and write $m(d,k):=\E[\alpha_d^k]$
for its $k$-th moment. We prove the symmetry $m(d,k)=m(k,d)$. As an application, we compute
$\Var[\alpha_d]=2^{-d}(d+1)^{-1}-4^{-d}$ and derive a closed formula for the third moment.
\item[(ii)] For $n=d+1,d+2,d+3$ we determine the probability that $\mathcal W_{n,d}\cap\mathbb S^{d-1}$ is a spherical simplex,
a spherical analogue of the classical Sylvester problem. In the case $n=d+2$ we also determine the distribution of the number of
vertices of $\mathcal W_{d+2,d}\cap\mathbb S^{d-1}$.
\item[(iii)] Let $f_\ell(\mathcal W_{n,d})$ denote the number of $\ell$-dimensional faces of $\mathcal W_{n,d}$.
We prove a distributional limit theorem for $f_\ell(\mathcal W_{n,d})$ in the regime $n=d+k$ and $\ell=d-q$,
where $k,q\in\mathbb N$ are fixed and $d\to\infty$. The limit law is a weighted sum of independent chi squared variables,
with weights given by explicit eigenvalues of a convolution operator on the sphere.
\item[(iv)] Let $I_1,I_2 \subseteq [n]$ be such that $I_1\cup I_2 = [n]$  and $\max(\#I_1, \#I_2) \leq d-1$. We prove that  the ``face events'' $\{\,\mathop{\mathrm{pos}} (U_i: i\in I_r) \text{ is a face of } \mathcal W_{n,d}\,\}$, \  $r=1,2$,  are independent.
\end{itemize}
A unifying ingredient is an explicit coupling producing i.i.d.\ uniform vectors
$U_1,\ldots,U_n\in\mathbb S^{d-1}$ together with i.i.d.\ uniform vectors $V_1,\ldots,V_n\in\mathbb S^{n-d-1}$ whose associated oriented matroids are Gale dual;
faces of $\pos(U_1,\ldots,U_n)$ correspond to complementary subconfigurations of $V_1,\ldots,V_n$ that positively span
$\mathbb R^{n-d}$. This correspondence yields, in particular, a $U$-statistic representation of $f_\ell(\mathcal W_{n,d})$.
\end{abstract}

\maketitle
\tableofcontents

\section{Introduction}

\subsection{Random polyhedral cones}
Let \(U_1,\ldots, U_n\) be independent random vectors, each distributed uniformly on the unit sphere $\bS^{d-1}$ in \(\R^d\).
The main object of interest in the present paper is the random polyhedral cone given by the positive hull of these vectors,
\[
\cW_{n,d}:= \pos (U_1,\ldots, U_n)
:= \{\lambda_1 U_1+ \cdots + \lambda_n U_n : \lambda_1\geq 0,\ldots,\lambda_n\geq 0\}\subseteq \R^d .
\]
The polar (convex dual) cone of \(\cW_{n,d}\) is the intersection of the half-spaces with outer unit normal vectors
\(U_1,\ldots, U_n\), namely
\[
\cW_{n,d}^\circ
:= \{y\in \R^d : \langle x, y \rangle\leq 0 \text{ for all } x\in \cW_{n,d}\}
= \bigcap_{i=1}^n  \{y\in \R^d : \langle y, U_i \rangle \leq 0\}.
\]
Thus, \(\cW_{n,d}^\circ\) is the feasible set of a system of \(n\) random linear inequalities, making it a natural and fundamental object.
The random cones \(\cW_{n,d}\), \(\cW_{n,d}^\circ\), and related models have been studied by
\citet{CoverEfron}, \citet{donoho_tanner_orthants}, and \citet{hug_schneider_conical_tessellations};
see also
\cite{godland_kabluchko_thaele_cones_high_dim_part_I, HugSchneiderThresholdPhenomenaPart2,HugSchneiderThresholdPhenomena,kabluchko_thaele_faces_hypersphere_tess,Schneider2021RandomGale}
and Chapters~5--6 of the book by \citet{schneider_book_convex_cones_probab_geom}.
One important result is Wendel's formula~\cite{wendel} (see also~\cite[Theorem~8.2.1]{schneider_weil_book}), which states that
\begin{equation}\label{eq:wendel_formula}
p(n,d) := \P[\cW_{n,d} = \R^d]
=  \frac 1  {2^{n-1}} \sum_{\ell=d}^{n-1} \binom{n-1}{\ell}.
\end{equation}

Beyond this, explicit formulas are known for expectations of several geometric functionals of \(\cW_{n,d}\) and \(\cW_{n,d}^\circ\)
(as well as of their conditioned versions).
As one example, for \(\ell \in \{0,\ldots, d-1\}\), let \(f_{\ell} (\cW_{n,d})\) denote the number of \(\ell\)-dimensional faces of \(\cW_{n,d}\).
Then \citet[Theorem~1.6]{donoho_tanner_orthants} show that
\[
\E f_\ell (\cW_{n,d})
= \binom{n}{\ell}\, p(n-\ell, n-d)
= \frac{1}{2^{\,n-\ell-1}}\binom{n}{\ell}\sum_{p=n-d}^{n-\ell-1}\binom{n-\ell-1}{p}.
\]
Other geometric functionals for which explicit expectation formulas are available include the conic intrinsic volumes
(with the solid angle \(\alpha(\cW_{n,d})\) as a special case), as well as sums of conic intrinsic volumes over all faces of a given dimension
and sums over the tangent cones at these faces.
There also exist sporadic results on second moments of certain geometric functionals~\cite[Theorem~8.1]{hug_schneider_conical_tessellations}, but almost nothing seems to be known about the full
distribution of \(f_\ell(\cW_{n,d})\), \(\alpha(\cW_{n,d})\), and related quantities.

\subsection{Main results}
In the present paper we establish several distributional results for \(\cW_{n,d}\) that go beyond expectations and second moments.

\subsubsection*{Moments of the solid angle}
In Section~\ref{sec:moments_angles} we study the solid angle of the cone \(\cW_{d,d}\) (i.e., \(n=d\)).
For a full-dimensional polyhedral cone \(C\subseteq \R^d\), its solid angle is defined by
\(\alpha(C):=\P[U\in C]\), where \(U\) is uniformly distributed on the unit sphere in \(\R^d\).
Writing \(m(d,k):=\E[\alpha(\cW_{d,d})^k]\) for the \(k\)-th moment, we prove the symmetry relation
\(m(d,k)=m(k,d)\).
As an application, we compute
\[
\Var(\alpha(\cW_{d,d}))=2^{-d}(d+1)^{-1}-4^{-d}.
\]
We also derive a closed-form expression for the third moment and propose a conjecture for the asymptotic behavior of \(m(d,k)\) as \(d\to\infty\) (with fixed $k$).

\subsubsection*{Spherical Sylvester problem}
In Section~\ref{sec:spherical_sylvester} we address a spherical analogue of Sylvester-type questions.
For \(n=d+1,d+2,d+3\) we compute the probability that \(\cW_{n,d}\cap \mathbb S^{d-1}\) is a spherical simplex.
Moreover, for \(n=d+2\) we determine the distribution of the number of vertices of
\(\cW_{d+2,d}\cap \mathbb S^{d-1}\).

\subsubsection*{High-dimensional limit theorems for face counts}
In Section~\ref{sec:limit_theorems_random_cones_high_dim} we prove a distributional limit theorem for face counts in a high-dimensional regime.
Recall that  \(f_{\ell}(\cW_{n,d})\) denotes the number of \(\ell\)-dimensional faces of \(\cW_{n,d}\).
We consider \(n=d+k\) and \(\ell=d-q\), where \(k\in\N\) and \(q\in\N\) are fixed, and let \(d\to\infty\).
We show that the centered and normalized quantity \(f_{d-q}(\cW_{d+k,d})\) converges in distribution to a non-Gaussian limit; more precisely,
\[
d\cdot \left(\frac{f_{d-q}(\cW_{d+k,d})}{\binom {d+k}{d-q}} - p(k+q, k)\right)
\;{\overset{w}{\underset{d\to\infty}\longrightarrow}}\;
\frac {\binom{k+q}{2}\binom {k+q-2}{k-1}} {2^{k+q-1}}  \, (1 -Q_k),
\]
where \(Q_k>0\) is an infinitely divisible random variable with \(\E Q_k = 1\) that admits the representation
\[
Q_k \eqdistr \sum_{r=1,3,5,\ldots}
\left(\frac{\Gamma\!\left(\frac{k}{2}\right)\Gamma\!\left(\frac{r}{2}\right)}
{\pi\,\Gamma\!\left(\frac{r+k}{2}\right)}\right)^2
\operatorname{Gamma} \left(\frac {d_{r,k}}2, \frac 12\right),
\qquad k\ge 2,
\]
and \(Q_1 \eqdistr \operatorname{Gamma}(\tfrac 12, \tfrac 12)\).
Here \(\operatorname{Gamma}(\alpha,\lambda)\) denotes a Gamma random variable with shape parameter \(\alpha>0\) and rate parameter \(\lambda>0\),
and all such variables are independent.
Moreover,
\[
d_{r,k} := \binom{r+k-1}{r}-\binom{r+k-3}{r-2}
\]
is the dimension of the space of degree-\(r\) spherical harmonics on \(\mathbb S^{k-1}\subseteq \R^k\) for \(k\ge 2\).
A more detailed statement is given in Theorem~\ref{theo:distributional_limit_face_count_random_cone}.

\subsubsection*{Independence of face events}
For a subset \(I\subseteq \{1,\ldots,n\}\), define the cone \(F_I := \pos(U_i: i\in I)\), which is a candidate face of \(\cW_{n,d}\),
and the event
\[
A_I := \{F_I \text{ is a face of } \cW_{n,d}\}.
\]
Somewhat surprisingly, many of the ``face events'' \(A_I\) are independent.
In Section~\ref{subsec:face_events_independence} we provide a sufficient condition guaranteeing such independence.

\subsection{Common technique: Gale duality}
Although the results above may appear unrelated at first sight, their proofs share a common ingredient: passing to the (linear) Gale transform.
The necessary background on Gale transforms is collected in Section~\ref{subsec:gale_transforms}.

The present work was inspired by \citet{frick_newman_pegden}. Their key observation is that the affine Gale transform of
\(n\) i.i.d.\ standard Gaussian points in \(\R^d\) may be realized as \(n\) independent standard Gaussian points in \(\R^{\,n-d-1}\),
translated so that their barycenter is at the origin. Using this, they established an equivalence between the following two problems:
\begin{itemize}
\item[(a)] determining the distribution of the Radon type of \(d+2\) independent Gaussian points in \(\R^d\), and
\item[(b)] determining the location of the sample mean among the order statistics of a one-dimensional i.i.d.\ Gaussian sample of size \(d+2\).
\end{itemize}
For further results on these two problems, see
\cite{Barysheva2025RandomConvexHulls,chan_kalai_etal_unimodality_radon_partitions,kabluchko_panzo,kuchelmeister_youdens_demon,white_radon_partitions_gauss_polys}.

In contrast, the Gale duality input used in the present paper is linear rather than affine.
The corresponding results will be stated in Proposition~\ref{prop:gauss_proj_gauss_embed_coupled} and
Theorem~\ref{theo:gale_duality_stoch_orthant}.

%Fix some  $d,k\in \N$.  On a suitable probability space, it is possible to construct random elements $U_1,\ldots, U_{d+k}, V_1,\ldots, V_{d+k}$ such that
%\begin{itemize}
%\item[(a)]  $U_1,\ldots, U_{d+k}$ are independent and uniform on the unit sphere in $\R^{d}$;
%\item[(b)]  $V_1,\ldots, V_{d+k}$ are independent and uniform on the unit sphere $\R^{k}$;
%\item[(c)]  For every set $I\subseteq [d+k]$ with $\# I \leq d-1$ we have
%\begin{equation}\label{eq:gale_duality_special_case_orthant}
%\pos (U_i: i\in I) \text{ is a face of } \pos (U_1,\ldots, U_{d+k})
%\quad \text{ if and only if } \quad
%\pos (V_i: i\in I^c) = \R^{k}.
%\end{equation}
%Here $I^c := [d+k] \bsl I$ denotes the complement of the index set $I$.
%\end{itemize}

\subsection{Notation}
Throughout the paper we use the following notation.
For a finite set \(A\), let \(\#A\) denote its cardinality. For \(n\in\N\), write \([n]\coloneqq \{1,\ldots,n\}\).

Let \(\R^d\) be \(d\)-dimensional Euclidean space with the standard inner product \(\langle\cdot,\cdot\rangle\) and norm \(\|\cdot\|\).
Let \(\bS^{d-1}:=\{x\in\R^d:\|x\|=1\}\) be the unit sphere, and let \(e_1,\ldots,e_d\) denote the standard orthonormal basis of \(\R^d\).

For \(X\subseteq \R^d\), we write \(\pos(X)\) for the positive hull of \(X\) and \(\lin(X)\) for the linear span of \(X\), that is,
\begin{align*}
\pos(X)
&:=
\left\{\sum_{i=1}^m \lambda_i x_i \,:\, m\in\mathbb N_0,\ x_1,\ldots,x_m\in X,\ \lambda_1,\ldots,\lambda_m\ge 0\right\},
\\
\lin(X)
&:=
\left\{\sum_{i=1}^m \lambda_i x_i \,:\, m\in\mathbb N_0,\ x_1,\ldots,x_m\in X,\ \lambda_1,\ldots,\lambda_m\in\R\right\}.
\end{align*}
If \(X=\{x_1,\ldots,x_n\}\) is finite, we also write \(\pos(x_1,\ldots,x_n)\) and \(\lin(x_1,\ldots,x_n)\) instead of \(\pos(X)\) and \(\lin(X)\).
By convention, \(\pos(\varnothing)=\lin(\varnothing)=\{0\}\).

A \emph{polyhedral cone} is the positive hull of finitely many vectors in \(\R^d\).
Equivalently, it is an intersection of finitely many half-spaces of the form
\(\{x\in\R^d:\langle x,u\rangle\le 0\}\), where \(u\in\R^d\setminus\{0\}\).

For sequences \((a_n)_{n\in\N}\) and \((b_n)_{n\in\N}\), we write \(a_n\sim b_n\) if \(a_n/b_n\to 1\) as \(n\to\infty\).
For random variables (or random elements) \(X\) and \(Y\), we write \(X\eqdistr Y\) to denote equality in distribution.
Weak convergence of random variables or elements is denoted by \(\overset{w}{\longrightarrow}\).

%\begin{definition}
%We say that $v_1,\dots,v_m\in\R^d$ are in \emph{general linear position} if for every
%$J\subseteq[m]$ with $|J|\le d$, the family $(v_j)_{j\in J}$ is linearly independent.
%If $m\ge d$, this is further equivalent to requiring that every $d$-element subset of
%$\{v_1,\dots,v_m\}$ is linearly independent.
%\end{definition}

\section{Linear Gale duality for random vector configurations}
\subsection{Linear Gale duality}\label{subsec:gale_transforms}
Gale duality (or the Gale transform) is a widely used tool in convex and discrete geometry. For detailed accounts of Gale duality and its applications, we refer to the books of~\citet[Section~5.4]{gruenbaum_book}, \citet[Chapter~6]{ziegler_book_lectures_polytopes}, ~\citet[Section~5.6]{matousek_book_lect_discrete_geometry}, \citet[Chapter~3]{mcmullen_shephard_book}, \citet[Section~3.4]{bjoerner_etal_book_oriented_matroids}, \citet[Section~2.14]{PinedaVillavicencio_book_2024PolytopesGraphs}, \citet[Sections~6B, 6C]{McMullen_book_2026ConvexPolytopesPolyhedra} and to the papers of~\citet{mcmullen_transforms}, \citet{Shephard1971DiagramsPositiveBases},  and~\citet{grunbaum_shephard_convex_polytopes_1969}.
There are two versions of Gale duality: affine and linear. The version most commonly treated in the literature is the affine one; in the present work, however, we require linear Gale duality studied in~\cite{mcmullen_transforms,Shephard1971DiagramsPositiveBases}, which we now define.

%The \emph{linear Gale transform} is defined as follows.
%\begin{definition}[Linear Gale duality]
%Fix some dimensions $d',d''\in \N$  and put $n := d' + d''$. Let $a_1,\ldots, a_{n} \in \R^{d'}$ be a collection of $n$ vectors whose linear span is $\R^{d'}$, and let $b_1,\ldots, b_{n}\in \R^{d''}$ be a collection of vectors whose linear span is $\R^{d''}$.  Consider the $d'\times n$-matrix $A$ whose columns are $a_1,\ldots, a_{n}$ and the $d''\times n$-matrix $B$ whose columns are $b_1,\ldots, b_{n}$. The vectors  $a_1,\ldots, a_{n}$ are said to be in linear Gale duality with the vectors $b_1,\ldots, b_{n}$ if every row of $A$ is orthogonal to every row of $B$. Equivalently, in matrix notation, $A B^{\top} = 0_{d' \times d''}$.
%\end{definition}

\begin{definition}[Linear Gale duality]\label{def:gale_duality_linear}
Let $d',d'' \in \mathbb{N}$ and put $n := d' + d''$.
Two vector configurations
$a_1,\dots,a_n \in \mathbb{R}^{d'}$ and
$b_1,\dots,b_n \in \mathbb{R}^{d''}$
are said to be in \emph{linear Gale duality} if the following conditions hold:
  \begin{enumerate}
    \item[(i)] the linear span of $a_1,\dots,a_n$ is $\R^{d'}$ and the linear span of $b_1,\dots,b_n$  is $\R^{d''}$;
    \item[(ii)] if $A \in \mathbb{R}^{d' \times n}$ is the matrix with columns $a_1,\ldots, a_n$ and $B \in \mathbb{R}^{d'' \times n}$ is the matrix with columns $b_1,\ldots, b_n$, then every row of $A$ is orthogonal to every row of $B$. In matrix notation,
          \[
            A B^{\top} \;=\; 0_{d' \times d''}.
          \]
  \end{enumerate}
\end{definition}

Condition~(i) states that $\rank A = d'$ and $\rank B = d''$. The row spaces of $A$ and $B$, denoted by $\Row(A)$ and $\Row(B)$, are defined as the linear subspaces in $\R^n$ spanned by the rows of $A$ and $B$, respectively. Condition~(ii) states that $\Row(A)$ and $\Row(B)$ are orthogonal to each other. In fact, by the full rank assumption, they even form complementary orthogonal subspaces of $\mathbb{R}^n$.  Thus conditions (i) and (ii) can be stated as
\begin{equation}\label{eq:Gale_duality_row_spaces_orthogonal_intro}
\dim\Row(A) = d', \qquad \dim\Row(B)=d'',\qquad  \Row(A)=\Row(B)^\perp.
\end{equation}

%Let $a_1,\ldots, a_n \in \R^d$ be a collection of $n$ vectors whose linear span is $\R^d$.  In particular, $n\geq d$.  Consider a linear operator $A: \R^n\to \R^d$ uniquely defined by $A e_1 = v_1, \ldots, A e_n = v_n$, where $e_1,\ldots, e_n$ is the standard orthonormal basis of $\R^n$. Then, $\Ker A$ is a linear subspace of $\R^n$ of dimension $n-d$. Let $B: \R^{n-d} \to \R^{n}$ be any linear operator such that $\Im B = \Ker A$ and let $B^\top$ be its adjoint. Consider the vectors  $\bar v_1:= B^\top e_1,\ldots, \bar v_n:= B^\top e_n$. Then, the tuple $(\bar v_1,\ldots, \bar v_n)$ is called a \emph{linear Gale transform} of $(v_1,\ldots, v_n)$. It is defined up to a non-singular linear transformation of $\R^{n-d}$. Indeed, the operator $B$ can be replaced by $BC$ with any nonsingular $C: \R^{n-d} \to \R^{n-d}$, which corresponds to replacing $\bar v_i$ by $C^\top B e_i = C^\top \bar v_i$.

There is a pronounced correspondence between the properties of linearly Gale-dual configurations
$a_1,\dots,a_n$ and $b_1,\dots,b_n$.   The following Lemma~\ref{lem:faces-positive-spanning-intro-version} records a concrete
instance of this correspondence: faces of the cone generated by the $a_i$ correspond to complements of
positive spanning subsets of the $b_j$, and conversely. While this fact is standard and can be found in some or other form in the references cited above, we
include a complete elementary proof in Appendix~\ref{sec:gale_duality_basic_facts},  Lemma~\ref{lem:faces-positive-spanning}.

%in order to avoid its terminology and formalism.

\begin{lemma}[Faces vs.\ positive spanning subsets under general linear position]\label{lem:faces-positive-spanning-intro-version}
Let $d',d''\in\mathbb N$ and put $n:=d'+d''$. Let $a_1,\dots,a_n\in\mathbb R^{d'}$ and
$b_1,\dots,b_n\in\mathbb R^{d''}$ be in linear Gale duality, and assume that both
configurations are in general linear position. Let $I\subseteq[n]$ satisfy $\# I\le d'-1$.
Then the following are equivalent:
\begin{enumerate}
  \item[(i)] $\pos(a_i:i\in I)$ is a face of the polyhedral cone $\pos(a_1,\dots,a_n)$.
  \item[(ii)] $\pos(b_j:j\in I^c)=\mathbb R^{d''}$. Here $I^c := [n] \bsl I$ is the complement of $I$.
\end{enumerate}
\end{lemma}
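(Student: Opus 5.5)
The plan is to translate both conditions into statements about the two complementary subspaces $L:=\Row(A)$ and $L^\perp=\Row(B)$ of $\R^n$, as in~\eqref{eq:Gale_duality_row_spaces_orthogonal_intro}. The basic dictionary is: a vector $\lambda\in\R^n$ is a linear dependence of the $a_i$, i.e.\ $\sum_i\lambda_i a_i=0$, if and only if $\lambda\in\Ker A=\Row(A)^\perp=\Row(B)$. Equivalently, $\lambda=(\langle y,b_1\rangle,\dots,\langle y,b_n\rangle)$ for some $y\in\R^{d''}$. Symmetrically, the linear functionals on the $a_i$, namely the vectors $(\langle x,a_i\rangle)_i$ with $x\in\R^{d'}$, are exactly the elements of $\Row(A)$, which is the space of linear dependences of the $b_j$.

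For (i)$\Rightarrow$(ii): let $F_I=\pos(a_i:i\in I)$ be a face of $C=\pos(a_1,\dots,a_n)$. Since $\#I\le d'-1$ and the $a_i$ are in general position, $F_I$ is a proper face. Hence there is $x\in\R^{d'}$ with $\langle x,a_i\rangle=0$ for $i\in I$ and $\langle x,a_j\rangle<0$ for every $j\notin I$. Here I use general position again: any $a_j$ lying in the face $F_I$ must lie in $\lin(a_i:i\in I)$, and this is impossible for $j\notin I$ when $\#I\le d'-1$. The vector $\mu:=(-\langle x,a_j\rangle)_j$ lies in $\Row(A)$. Therefore $\sum_j \mu_j b_j=0$, where $\mu_j=0$ on $I$ and $\mu_j>0$ on $I^c$, so we have a strictly positive dependence among the $b_j$, $j\in I^c$. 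The vectors $b_j$, $j\in I^c$, span $\R^{d''}$, since $\#I^c\ge d''+1$ and the $b_j$ are in general position. A spanning family with a strictly positive linear dependence positively spans. To see this, write any $v=\sum_j c_j b_j$ and add a large multiple of $\sum_j\mu_j b_j=0$ to make all coefficients nonnegative. This gives (ii).

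For (ii)$\Rightarrow$(i), reverse the argument. If $\pos(b_j:j\in I^c)=\R^{d''}$, then $-\sum_{j\in I^c} b_j$ is a nonnegative combination of the $b_j$, $j\in I^c$, which yields a dependence $\sum_{j\in I^c}\mu_j b_j=0$ with all $\mu_j\ge 1>0$. Extend $\mu$ by zeros on $I$. Then $\mu$ is a dependence of the $b$'s, so $\mu\in\Row(A)$, and hence $\mu=(\langle x,a_i\rangle)_i$ for some $x$. Replacing $x$ by $-x$, we get a linear functional that vanishes on $a_i$ for $i\in I$ and is strictly negative on $a_j$ for $j\in I^c$. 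Thus $H=\{z:\langle x,z\rangle=0\}$ is a supporting hyperplane of $C$ with $C\cap H=\pos(a_i:i\in I)$, and so $F_I$ is a face. This equality holds because any point of $C$ with $\langle x,z\rangle=0$ must have zero coefficients on $I^c$ in every representation.

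The step needing most care is the existence of the strictly separating functional in (i)$\Rightarrow$(ii). Faces of $C$ are exposed, so some $x$ with $C\cap x^\perp=F_I$ exists. The point to verify is that no generator $a_j$ with $j\notin I$ lies in $F_I$. This is where general position of the $a_i$ and $\#I\le d'-1$ enter: such an $a_j$ would give $\#I+1\le d'$ linearly dependent vectors. General position of the $b_j$ enters in the converse direction and in the spanning claim.
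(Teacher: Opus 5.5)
Your proof is correct and follows essentially the paper's argument: both directions rest on a vector in $\Row(A)=\Row(B)^\perp$ that vanishes on $I$ and is strictly positive on $I^c$. General position enters exactly where the paper uses it, namely that no $a_j$ with $j\notin I$ lies in $\pos(a_i:i\in I)$, and that the $b_j$, $j\in I^c$, span $\R^{d''}$. The differences are only organizational: you inline Lemmas~\ref{lem:pos_eq_lin_hull} and~\ref{lem:faces-gale-dual} and treat $I=\varnothing$ together with the other cases instead of separately; also, contrary to your closing remark, your direction (ii)$\Rightarrow$(i) uses no general position at all.
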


\begin{remark}
Pairwise distinct vectors $v_1,\dots,v_m\in\mathbb R^d$ are said to be in \emph{general linear position}
if every subset of $\{v_1,\dots,v_m\}$ of size at most $d$ is linearly independent.
If $m\ge d$, this is equivalent to requiring that every $d$-element subset of $\{v_1,\dots,v_m\}$
is linearly independent.
\end{remark}

\begin{example}\label{ex:I-empty}
Let $I=\varnothing$. By convention, $\pos(\varnothing)=\{0\}$, so condition~(i) says that $\{0\}$ is a face of
$C_A:=\pos(a_1,\ldots,a_n)$, i.e.\ that the cone $C_A$ is pointed. If the vectors
$a_1,\ldots,a_n$ are in general linear position and linearly span $\R^{d'}$, then one checks
that $C_A$ is either pointed or equal to $\R^{d'}$. Hence  Lemma~\ref{lem:faces-positive-spanning-intro-version} with $I=\varnothing$ takes the form
\[
\pos(a_1,\ldots,a_n)\neq \R^{d'} \quad\Longleftrightarrow\quad \pos(b_1,\ldots,b_n)=\R^{d''}.
\]
\end{example}

\subsection{Gale coupling between two Gaussian projections}
%Let $\bfG$ be a \emph{Gaussian $d_1\times d_2$-matrix}, that is a matrix whose entries are i.i.d.\ standard Gaussian random variables.  We identify  matrices with linear operators and call $\bfG:\R^{d_2} \to \R^{d_1}$  a \emph{Gaussian linear operator}. Note that the adjoint operator $\bfG^\top: \R^{d_1} \to \R^{d_2}$ is also a Gaussian linear operator.
%Observe also that if $O_1: \R^{d_1}\to \R^{d_1}$ and $O_2: \R^{d_2}\to \R^{d_2}$ are orthogonal transformations, then $O_1 \bfG O_2$ has the same distribution as $\bfG$.  Thus the notion of Gaussian linear operator does not depend on the choice of the orthonormal bases in $\R^{d_2}$ and $\R^{d_1}$. If $d_2\geq d_1$ we also call $\bfG$ a \emph{Gaussian projection}.
Let $\bfG$ be a \emph{Gaussian $d_1\times d_2$ matrix}, i.e.\ a random matrix whose
entries are i.i.d.\ standard Gaussian random variables. We view $\bfG$ as a random linear operator
$
\bfG:\mathbb R^{d_2}\to \mathbb R^{d_1},
$
and refer to it as a \emph{Gaussian linear operator}. Its transpose $\bfG^{\top}:\mathbb R^{d_1}\to
\mathbb R^{d_2}$ is again a Gaussian linear operator.
A key property is rotational invariance: for any orthogonal transformations
$O_1\in O(d_1)$ and $O_2\in O(d_2)$, the random matrix $O_1\,\bfG\,O_2$ has the same
distribution as $\bfG$. In particular, the notion of a Gaussian linear operator does not
depend on the choice of orthonormal bases in $\mathbb R^{d_1}$ and $\mathbb R^{d_2}$.
If $d_2\ge d_1$, we also refer to $\bfG$ as a \emph{Gaussian projection}; note, however, that
$\bfG$ is not a projection in the literal sense.

Fix $d',d''\in\mathbb N$ and set $n:=d'+d''$. Let $\bfG:\mathbb R^{n}\to\mathbb R^{d'}$ and
$\bfH:\mathbb R^{n}\to\mathbb R^{d''}$ be Gaussian projections. With probability one, both
$\Ker(\bfG)$ and $\Ker(\bfH)^\perp$ are $d''$-dimensional linear subspaces of $\mathbb R^{n}$
(since $\bfG$ has rank $d'$ and $\bfH$ has rank $d''$ almost surely). Let $G(n,d'')$ denote the
Grassmannian of $d''$-dimensional linear subspaces of $\mathbb R^{n}$, endowed with its natural
Borel $\sigma$-algebra. We view $\Ker(\bfG)$ and $\Ker(\bfH)^\perp$ as random elements of
$G(n,d'')$.
By rotational invariance of Gaussian matrices, these random subspaces are $O(n)$-invariant: for
every orthogonal transformation $O\in O(n)$,
\[
O\,\Ker(\bfG)\ \eqdistr\ \Ker(\bfG)
\qquad\text{and}\qquad
O\,\Ker(\bfH)^\perp\ \eqdistr\ \Ker(\bfH)^\perp.
\]
Since there is a unique $O(n)$-invariant probability measure on $G(n,d'')$ (the Haar, or
uniform, measure), it follows that both $\Ker(\bfG)$ and $\Ker(\bfH)^\perp$ are distributed
according to this measure. So
\begin{equation}\label{eq:gauss_proj_embed_distr_equality}
\Ker(\bfG)\ \eqdistr\ \Ker(\bfH)^\perp,
\qquad\text{and this common distribution is uniform on } G(n,d'').
\end{equation}

The next proposition shows that one can couple two Gaussian projections $\bfG$ and $\bfH$
on a common probability space in such a way that the distributional identity
\eqref{eq:gauss_proj_embed_distr_equality} is realized by an almost sure equality of subspaces. ($\bfG$ and $\bfH$ may become dependent in this coupling.)

\begin{proposition}[Coupling of Gaussian projections]\label{prop:gauss_proj_gauss_embed_coupled}
Let $d',d''\in\mathbb N$ and set $n:=d'+d''$. There exists a probability space on which one can
define two Gaussian projections $\bfG:\mathbb R^{n}\to\mathbb R^{d'}$ and
$\bfH:\mathbb R^{n}\to\mathbb R^{d''}$ such that
\begin{equation}\label{eq:Ker_bfG_ker_bfH}
\Ker \bfG \;=\; (\Ker \bfH)^\bot \quad \text{a.s.}
%\qquad\text{(equivalently, $\bfG \bfH^\top = 0$ a.s.)}.
\end{equation}
Moreover, if $e_1,\ldots,e_n$ denotes the standard orthonormal basis of $\mathbb R^{n}$, then
for almost every realization of $(\bfG,\bfH)$ the vector configurations
\[
\bfG e_1,\dots,\bfG e_n \in \mathbb R^{d'}
\qquad\text{and}\qquad
\bfH e_1,\dots,\bfH e_n \in \mathbb R^{d''}
\]
are in linear Gale duality.
\end{proposition}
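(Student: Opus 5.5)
We construct the coupling explicitly from a single Gaussian matrix plus an independent Haar-distributed subspace, or more simply via polar decompositions. The cleanest route is as follows.

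Start from a Gaussian projection $\bfG:\R^n\to\R^{d'}$. Almost surely $\bfG$ has rank $d'$, so $L:=\Ker\bfG$ is a $d''$-dimensional subspace, uniformly distributed on $G(n,d'')$ by \eqref{eq:gauss_proj_embed_distr_equality}. Independently of $\bfG$, take a Gaussian projection $\bfH_0:\R^n\to\R^{d''}$. Write the polar decomposition $\bfH_0=P_0\,\Pi_{M_0}$-style, or more concretely $\bfH_0 = T_0\, Q_0$. Here $Q_0:\R^n\to\R^{d''}$ is a partial isometry with $Q_0Q_0^\top=I_{d''}$, namely $Q_0=(\bfH_0\bfH_0^\top)^{-1/2}\bfH_0$, whose row space is $M_0:=(\Ker\bfH_0)^\perp$, and $T_0:=(\bfH_0\bfH_0^\top)^{1/2}$. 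The key distributional fact is that $T_0$ and the subspace $M_0$ are independent, and that conditionally on $T_0$ the isometry $Q_0$ is Haar on the Stiefel manifold. This follows from the right $O(n)$-invariance $\bfH_0O\eqdistr\bfH_0$: the Gram matrix $\bfH_0\bfH_0^\top=T_0^2$ is unchanged under $\bfH_0\mapsto\bfH_0O$, while $Q_0\mapsto Q_0O$.

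Now choose, measurably in $(L,M_0)$, an orthogonal $O\in O(n)$ mapping $M_0$ onto $L$. Rather than choose measurably, the simplest implementation is this: let $R$ be an independent Haar orthogonal matrix. By invariance we may instead build the coupling by generating $(L, Q)$ where $Q$ is a partial isometry with row space $L$, uniform given $L$. To do so, set $Q:=W\,B_L$, where $B_L$ is the orthogonal projection onto $L$ expressed in some fixed measurable orthonormal frame of $L$ (a $d''\times n$ matrix with orthonormal rows spanning $L$, obtained e.g.\ by Gram--Schmidt applied to the projections of $e_1,\dots,e_n$ onto $L$), and $W$ is an independent Haar element of $O(d'')$. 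Finally, define $\bfH:=T\,Q$, where $T$ is an independent copy of $T_0$.

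We then verify that $\bfH$ is a Gaussian projection. The pair $(T,Q)$ has $T\eqdistr T_0$ and $T$ independent of $Q$. Moreover $Q$ is Haar on the Stiefel manifold: its row space $L$ is uniform on $G(n,d'')$, and given $L$ the frame is uniformized by $W$. A Haar partial isometry is exactly a uniform row space together with a uniform orthonormal frame within it, so $(T,Q)\eqdistr(T_0,Q_0)$ and hence $\bfH\eqdistr\bfH_0$. By construction $(\Ker\bfH)^\perp=\Row(\bfH)=\Row(Q)=L=\Ker\bfG$ almost surely, which proves \eqref{eq:Ker_bfG_ker_bfH}.

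For the Gale duality claim, the columns of $\bfG$ and $\bfH$ are $\bfG e_i$ and $\bfH e_i$. Condition (i) of Definition~\ref{def:gale_duality_linear} holds because $\rank\bfG=d'$ and $\rank\bfH=d''$ almost surely. For condition (ii), $\Row(\bfG)=(\Ker\bfG)^\perp=L^\perp$ and $\Row(\bfH)=L$, so $\bfG\bfH^\top=0$.

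The main obstacle is the distributional step: making rigorous that $T_0$ is independent of $Q_0$ with $Q_0$ Haar, and that the Haar measure on the Stiefel manifold disintegrates as uniform subspace times uniform frame. The first part follows by conditioning with an independent Haar $R$: $\bfH_0\eqdistr\bfH_0R$, and $(T_0,Q_0R)$ has $Q_0R$ Haar independent of $T_0$. The second part follows from uniqueness of the invariant measure on the homogeneous Stiefel manifold, since our $Q$ is $O(n)$-invariant in law. The latter holds because $L$ is invariant and the random $W$ absorbs the frame change. Measurability of the frame choice is routine.
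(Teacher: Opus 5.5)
Your proof is correct, and it takes a different route from both of the paper's proofs.

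The paper's first proof is non-constructive. It glues $\bfG$ and $\bfH$ along the common Grassmannian-valued statistic using regular conditional distributions (Lemma~\ref{lem:coupling_general}). This buys generality: any full-rank random matrices with matching kernel laws can be coupled, as in the remark following that proof. It says nothing, however, about the conditional law of $\bfH$ given $\bfG$.

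The paper's second proof is explicit, like yours. Given $L=\Ker\bfG$, it samples the rows of $\bfH$ inside $L$ with density proportional to $\eee^{-\frac12\sum_i\|z_i\|^2}\nabla_{d''}(z_1,\ldots,z_{d''})^{d'}$. The linear Blaschke--Petkantschin formula is what certifies that these rows are i.i.d.\ standard Gaussian in $\R^n$.

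You replace that integral-geometric input by invariance arguments alone:
\begin{enumerate}
\item[(a)] The polar factors $T_0$ and $Q_0$ of $\bfH_0$ are independent, with $Q_0$ Haar on the Stiefel manifold. You get this via an independent Haar $R$.
\item[(b)] Haar measure on the Stiefel manifold splits as a uniform row space together with a uniform frame inside it. This holds because $B_LO=V\,B_{O^\top L}$ for some $V\in O(d'')$, and $W$ absorbs $V$.
\end{enumerate}
Your route needs no normalizing constants. It produces $\bfH=T\,W\,B_L$ from the independent ingredients $\bfG$, $W$, $T$, which is transparent and easy to simulate.

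The two explicit couplings in fact have the same joint law. In the coordinates given by $B_L$, the $d''\times d''$ matrix $TW$ has density proportional to $\eee^{-\frac12\operatorname{tr}(MM^\top)}|\det M|^{d'}$, which is exactly the Blaschke--Petkantschin density $f_L$.

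One point you use tacitly should be stated: $T$ is a.s.\ invertible because $\bfH_0$ has full rank a.s. This is what gives $\Row(TQ)=\Row(Q)=L$.
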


We shall give two proofs of Proposition~\ref{prop:gauss_proj_gauss_embed_coupled}. The first proof
is based on the following general coupling principle.

\begin{lemma}\label{lem:coupling_general}
Let $X\in\mathbb R^{p}$ and $Y\in\mathbb R^{q}$ be random vectors, and let
$\varphi:\mathbb R^{p}\to T$ and $\psi:\mathbb R^{q}\to T$ be Borel-measurable maps into a Polish space
$T$ (equipped with its Borel $\sigma$-algebra). Assume that $\varphi(X)$ and $\psi(Y)$ have the same
distribution on $T$. Then there exists a probability space carrying random vectors $X'$ and $Y'$
such that
\[
X'\eqdistr X,\qquad Y'\eqdistr Y,\qquad \text{and}\qquad \varphi(X')=\psi(Y') \ \text{a.s.}
\]
\end{lemma}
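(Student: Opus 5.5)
The plan is a gluing argument: disintegrate the laws of $X$ and $Y$ with respect to their images in $T$, and glue the two conditional laws along a common draw of the image.

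Let $\mu$ be the common law of $\varphi(X)$ and $\psi(Y)$ on $T$. Since $\mathbb R^p$ is Polish and $T$ is Polish, there is a regular conditional distribution of $X$ given $\varphi(X)$. Concretely, this is a probability kernel $K_1:T\times\mathcal B(\mathbb R^p)\to[0,1]$ such that
\[
\P[X\in A,\ \varphi(X)\in B]=\int_B K_1(t,A)\,\mu(\dd t)
\]
for all Borel sets $A\subseteq\mathbb R^p$ and $B\subseteq T$. In the same way we obtain a kernel $K_2$ on $\mathbb R^q$ for $Y$ given $\psi(Y)$.

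Now build the probability space $\Omega:=T\times\mathbb R^p\times\mathbb R^q$ and equip it with the measure
\[
\P'(\dd t,\dd x,\dd y):=\mu(\dd t)\,K_1(t,\dd x)\,K_2(t,\dd y).
\]
Define $X'(t,x,y):=x$ and $Y'(t,x,y):=y$. Integrating out $y$ and taking $B=T$ in the defining identity of $K_1$ gives $X'\eqdistr X$; the same computation gives $Y'\eqdistr Y$.

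The remaining step, and the one needing care, is to show $\varphi(X')=\psi(Y')$ a.s. For this it suffices to prove that
\[
K_1(t,\varphi^{-1}(\{t\}))=1 \quad\text{and}\quad K_2(t,\psi^{-1}(\{t\}))=1 \qquad\text{for $\mu$-a.e.\ $t$.}
\]
Once this holds, $\P'$-a.s.\ we have $\varphi(x)=t=\psi(y)$.

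To prove the claim for $K_1$, note that the set $\{(x,t):\varphi(x)=t\}$ is Borel in $\mathbb R^p\times T$, because the graph of a Borel map into a Polish (hence separable metrizable) space is Borel. Let $\nu$ be the measure $\mu(\dd t)K_1(t,\dd x)$. By the kernel identity, $\nu$ agrees with the joint law of $(\varphi(X),X)$ on rectangles, so by the $\pi$--$\lambda$ theorem it agrees on all Borel sets. The joint law of $(\varphi(X),X)$ gives full mass to the graph, so $\nu$ does too. Fubini then yields
\[
\int K_1\bigl(t,\varphi^{-1}(\{t\})\bigr)\,\mu(\dd t)=1,
\]
which forces $K_1(t,\varphi^{-1}(\{t\}))=1$ for $\mu$-a.e.\ $t$. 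The same argument applies to $K_2$.

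Two alternative routes are worth noting. One could instead invoke the standard gluing lemma, or use the $\pi$--$\lambda$ identification above to avoid separability issues altogether. If measurability of the graph were doubtful, one could argue with a countable separating family $(B_m)$ of Borel sets of $T$, which exists because $T$ is Polish. For each $m$ one checks that $\ind_{B_m}(\varphi(X'))=\ind_{B_m}(t)$ a.s., using $K_1(t,\varphi^{-1}(B_m))=\ind_{B_m}(t)$ for $\mu$-a.e.\ $t$. The main obstacle is this almost-sure concentration of the conditional kernels on the fibres; everything else is routine construction.
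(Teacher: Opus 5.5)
Your proof is correct and is essentially the paper's argument: take regular conditional distributions of $X$ given $\varphi(X)$ and of $Y$ given $\psi(Y)$, draw $t$ from the common law $\mu$, and sample $X'$, $Y'$ conditionally independently from the two kernels. Your graph-measurability and Fubini step shows $K_1(t,\varphi^{-1}(\{t\}))=1$ for $\mu$-a.e.\ $t$, which is all that is needed; this is slightly more careful than the paper, which asserts the support property for every $t\in T$ (impossible when $t\notin\varphi(\mathbb R^p)$).
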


\begin{proof}
Let $(\pi_t^X)_{t\in T}$ be a regular conditional distribution of $X$ given $\varphi(X)$, and let
$(\pi_t^Y)_{t\in T}$ be a regular conditional distribution of $Y$ given $\psi(Y)$. Existence of these
kernels follows since $T$ is Polish; see, e.g.,~\cite[Theorem~10.2.2]{dudley_book_real_analysis_probab}.
By construction, for every $t\in T$,
\begin{equation}\label{eq:lemma_coupling_proof_supposrt_cond_law}
\pi_t^X\bigl(\varphi^{-1}(\{t\})\bigr)=1
\qquad\text{and}\qquad
\pi_t^Y\bigl(\psi^{-1}(\{t\})\bigr)=1.
\end{equation}

Now construct a random element $Z$ with values in $T$ such that
$Z\eqdistr \varphi(X)$ (equivalently, $Z\eqdistr \psi(Y)$). Conditionally on $Z=t$, sample $X'$
according to $\pi_t^X$ and $Y'$ according to $\pi_t^Y$ (for instance, take $X'$ and $Y'$ conditionally
independent given $Z$). Then $X'\eqdistr X$ and $Y'\eqdistr Y$ by the defining property of regular
conditional distributions. Moreover, the support properties~\eqref{eq:lemma_coupling_proof_supposrt_cond_law} imply $\varphi(X')=Z=\psi(Y')$ almost surely.
\end{proof}

\begin{proof}[First proof of Proposition~\ref{prop:gauss_proj_gauss_embed_coupled}]
We apply Lemma~\ref{lem:coupling_general} with $T:=G(n,d'')$,
$X \eqdistr \bfG$ and $Y \eqdistr \bfH$, viewed as random elements of
$\mathbb R^{d'\times n}$ and $\mathbb R^{d''\times n}$, respectively. Define Borel maps
\[
\varphi:\mathbb R^{d'\times n}\to T,\quad \varphi(M):=\Ker(M)
\quad \text{ and }\quad
\psi:\mathbb R^{d''\times n}\to T,\quad \psi(M):=\Ker(M)^\perp.
\]
By~\eqref{eq:gauss_proj_embed_distr_equality}, the random subspaces $\varphi(\bfG)=\Ker(\bfG)$ and
$\psi(\bfH)=\Ker(\bfH)^\perp$ have the same distribution on $T$. Hence Lemma~\ref{lem:coupling_general}
yields a coupling of $\bfG$ and $\bfH$ on a common probability space such that
\begin{equation}\label{eq:coupled-kernels}
\Ker(\bfG)=\Ker(\bfH)^\perp \quad\text{a.s.}
\end{equation}

The matrices with columns $\bfG e_1,\dots,\bfG e_n$ and $\bfH e_1,\dots,\bfH e_n$ are precisely $\bfG$ and $\bfH$. Since $\bfG$ and $\bfH$ are standard Gaussian matrices, they have full row rank almost surely,
i.e.\ $\rank(\bfG)=d'$ and $\rank(\bfH)=d''$ a.s.  Together with~\eqref{eq:coupled-kernels} this shows that  Conditions~\eqref{eq:Gale_duality_row_spaces_orthogonal_intro} are satisfied, hence the configurations
$\bfG e_1,\dots,\bfG e_n$ and $\bfH e_1,\dots,\bfH e_n$ are in linear Gale duality.
\end{proof}

%????We now show that~\eqref{eq:coupled-kernels} implies $\bfG\bfH^\top=0$ a.s.
%First, since $\bfG$ and $\bfH$ are standard Gaussian matrices, they have full row rank almost surely,
%i.e.\ $\rank(\bfG)=d'$ and $\rank(\bfH)=d''$ a.s. In particular,
%\[
%\dim\Ker(\bfG)=n-d'=d'' \quad\text{and}\quad \dim\Ker(\bfH)^\perp=d'' \qquad \text{a.s.}
%\]
%By~\eqref{eq:coupled-kernels} we have $\Ker(\bfH)^\perp\subseteq \Ker(\bfG)$ a.s., hence for every
%$y\in\mathbb R^{d''}$ the vector $\bfH^\top y$ lies in $\Ker(\bfG)$ a.s. Therefore,
%\[
%\bfG\bfH^\top y=0 \quad\text{for all }y\in\mathbb R^{d''} \quad\text{a.s.},
%\]
%which proves $\bfG\bfH^\top=0$ a.s. (Conversely, $\bfG\bfH^\top=0$ implies
%$\Ker(\bfH)^\perp=\Im(\bfH^\top)\subseteq\Ker(\bfG)$, and since both subspaces have dimension $d''$
%a.s., this inclusion is an equality a.s.)
%
%Finally, recall that $e_1,\dots,e_n$ denotes the standard orthonormal basis of $\mathbb R^{n}$.
%Since $\rank(\bfG)=d'$ and $\rank(\bfH)=d''$ a.s., these column configurations span $\mathbb R^{d'}$
%and $\mathbb R^{d''}$, respectively. Moreover, $\bfG\bfH^\top=0$ a.s.\ means that every row of $\bfG$
%is orthogonal to every row of $\bfH$ a.s. Hence, for almost every realization, .

\begin{remark}
The same argument proves the following more general claim. Consider random matrices $\bfA: \R^{d'+d''} \to \R^{d'}$ and $\bfB: \R^{d'+d''} \to \R^{d''}$ such that, both, $\bfA$ and $\bfB$ have full rank a.s.\ and $\Ker \bfA$  has the same distribution as $(\Ker \bfB)^\bot$ on the Grassmannian $G(d'+d'', d'')$.  Then, there is a coupling of $\bfA$ and $\bfB$ such that $\Ker \bfA = (\Ker \bfB)^\bot$ a.s. The proof follows from Lemma~\ref{lem:coupling_general}.
\end{remark}

\begin{proof}[Second proof of Proposition~\ref{prop:gauss_proj_gauss_embed_coupled}]
In this proof, we give an explicit construction of the coupling $(\bfG, \bfH)$.
Let $X_1,\ldots,X_{d''}$ be i.i.d.\ standard Gaussian points in $\R^{n} = \R^{d'+d''}$.  The linear Blaschke--Petkantschin formula~\cite[Theorem~7.2.1]{schneider_weil_book} states that for every nonnegative Borel function $h:(\R^{n})^{d''} \to [0,\infty)$,
\begin{multline*}
\E h(X_1,\ldots,  X_{d''}) = \int\displaylimits_{(\R^{n})^{d''}}  \frac{\eee^{-\frac 12 (\|z_1\|^2+ \ldots + \|z_{d''}\|^2)}}{(2\pi)^{n d''/2}} \cdot h(z_1,\ldots, z_{d''}) \; \prod_{i=1}^{d''} \dint z_i
\\
= B \int\displaylimits_{G(n, d'')}  \int\displaylimits_L   \frac{\eee^{-\frac 12 (\|z_1\|^2+ \ldots + \|z_{d''}\|^2)}}{(2\pi)^{nd''/2}} \cdot  (\nabla_{d''}(z_1,\ldots, z_{d''}))^{d'} \cdot h(z_1,\ldots, z_{d''}) \; \prod_{i=1}^{d''} \lambda_L(\dint z_i)\; \nu_{n,d''}(\dint L),
\end{multline*}
where  $\nu_{n,d''}$ is the uniform probability distribution on $G(n, d'')$, $\lambda_L$ is the Lebesgue measure on $L\in G(n, d'')$, $\nabla_{d''}(z_1,\ldots, z_{d''})$ denotes the $d''$-dimensional volume of the parallelepiped spanned by the vectors $z_1,\ldots, z_{d''}$, and $B$ is a normalizing constant  whose value can be found in~\cite[Eqn.~(7.8), p.~271]{schneider_weil_book}.

To construct the coupling $(\bfG,\bfH)$, we consider the following two-stage random experiment. In a first stage, construct a random Gaussian matrix $\bfG: \R^{n} \to \R^{d'}$. In a second stage, conditionally on $\Ker \bfG = L$, where $L\subseteq \R^{n}$ is a $d''$-dimensional linear subspace, we construct random points $(Z_1,\ldots, Z_{d''})$ on $L$ whose joint probability density with respect to $(\lambda_L)^{\otimes d''}$ is given by
$$
f_L(z_1,\ldots, z_{d''})
:=
B \cdot  \frac{\eee^{-\frac 12 (\|z_1\|^2+ \ldots + \|z_{d''}\|^2)}}{(2\pi)^{n d''/2}} \cdot  (\nabla_{d''}(z_1,\ldots, z_{d''}))^{d'},
\qquad  z_1,\ldots, z_{d''}\in L.
$$
(The fact that $f_L$ integrates to $1$ for every fixed $L\in G(n, d'')$ follows from the Blaschke--Petkantschin formula with $h\equiv 1$ together with the fact that the integrand in $\int_{G(n, d'')} \ldots \nu_{n,d''}(\dint L)$ does not depend on $L$.) With probability $1$, the linear span of $Z_1,\ldots, Z_{d''}$ is $L$. Now, for every nonnegative Borel function $h:(\R^{n})^{d''} \to [0,\infty)$, the definition of $Z_1,\ldots, Z_{d''}$ yields
\begin{multline*}
\E h(Z_1,\ldots,  Z_{d''})
\\
= \int\displaylimits_{G(n, d'')}  \int\displaylimits_L   B \cdot \frac{\eee^{-\frac 12 (\|z_1\|^2+ \ldots + \|z_{d''}\|^2)}}{(2\pi)^{n d''/2}} \cdot (\nabla_{d''}(z_1,\ldots, z_{d''}))^{d'}\cdot  h(z_1,\ldots, z_{d''}) \; \prod_{i=1}^{d''} \lambda_L(\dint z_i)\; \nu_{n,d''}(\dint L).
\end{multline*}
So $\E h(Z_1,\ldots,  Z_{d''}) = \E h(X_1,\ldots, X_{d''})$ for every $h$  and it follows that $(Z_1,\ldots, Z_{d''})$ have the same full joint distribution as $(X_1,\ldots, X_{d''})$, i.e.\ they are i.i.d.\ standard Gaussian points in $\R^{n}$. We can now construct $\bfH: \R^{n} \to \R^{d''}$ by declaring that $\bfH^\top$ maps  the standard orthonormal basis of $\R^{d''}$ to $Z_1,\ldots, Z_{d''}$.  Then, $\bfH$ is a Gaussian matrix and  $(\Ker \bfH)^\bot = \Im (\bfH^\top) = L = \Ker \bfG$.
\end{proof}

\subsection{Duality coupling for i.i.d.\ random unit vectors}
The next result will be repeatedly used  in the subsequent sections.

\begin{theorem}[Main duality coupling]\label{theo:gale_duality_stoch_orthant}
Fix some  $d,k\in \N$.  On a suitable probability space one can  construct random vectors $U_1,\ldots, U_{d+k}, V_1,\ldots, V_{d+k}$ such that
\begin{itemize}
\item[(a)]  $U_1,\ldots, U_{d+k}$ are independent and uniform on the unit sphere $\bS^{d-1}$ in $\R^{d}$;
\item[(b)]  $V_1,\ldots, V_{d+k}$ are independent and uniform on the unit sphere $\bS^{k-1}$ in $\R^{k}$;
\item[(c)]  For every outcome in the underlying probability space and every set $I\subseteq [d+k]$ with $\# I \leq d-1$ we have
\begin{equation}\label{eq:gale_duality_special_case_orthant}
\pos (U_i: i\in I) \text{ is a face of } \pos (U_1,\ldots, U_{d+k})
\quad \Longleftrightarrow  \quad
\pos (V_i: i\in I^c) = \R^{k}.
\end{equation}
Here $I^c := [d+k] \bsl I$ is the complement of the index set $I$.
\end{itemize}
\end{theorem}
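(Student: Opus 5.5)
The plan is to apply Proposition~\ref{prop:gauss_proj_gauss_embed_coupled} with $d':=d$, $d'':=k$, $n=d+k$, and then normalize columns. This gives Gaussian matrices $\bfG:\R^{d+k}\to\R^d$ and $\bfH:\R^{d+k}\to\R^k$ on a common probability space with $\Ker\bfG=(\Ker\bfH)^\perp$ a.s. Consequently, the column configurations $g_i:=\bfG e_i$ and $h_i:=\bfH e_i$, $i\in[d+k]$, are in linear Gale duality almost surely.

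The columns of a Gaussian matrix are i.i.d.\ standard Gaussian vectors, so $g_1,\ldots,g_{d+k}$ are i.i.d.\ $N(0,I_d)$ and $h_1,\ldots,h_{d+k}$ are i.i.d.\ $N(0,I_k)$. These two families may depend on each other, but within each family independence holds. We set $U_i:=g_i/\|g_i\|$ and $V_i:=h_i/\|h_i\|$, which is well defined a.s. By rotational invariance of the standard Gaussian law, the $U_i$ are i.i.d.\ uniform on $\bS^{d-1}$ and the $V_i$ are i.i.d.\ uniform on $\bS^{k-1}$. This gives (a) and (b).

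For (c), we use the fact that positive rescaling of individual vectors changes neither positive hulls nor face structure. Indeed, $\pos(U_i:i\in I)=\pos(g_i:i\in I)$ and $\pos(U_1,\ldots,U_{d+k})=\pos(g_1,\ldots,g_{d+k})$, and likewise $\pos(V_j:j\in I^c)=\pos(h_j:j\in I^c)$. So it suffices to verify \eqref{eq:gale_duality_special_case_orthant} for the Gaussian columns. We also need both configurations to be in general linear position. This holds a.s.: any $d$ of the $g_i$ (respectively any $k$ of the $h_i$) are linearly independent a.s., since the corresponding determinant is a nonzero polynomial in Gaussian entries, and distinctness also holds a.s. On the resulting full-measure event, Lemma~\ref{lem:faces-positive-spanning-intro-version} applies directly for all $I$ with $\#I\le d-1$ at once, because there are finitely many such $I$. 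This yields (c) almost surely.

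The one remaining point, and the only real obstacle, is that (c) is claimed for \emph{every} outcome rather than almost surely. I would handle this by modifying the construction on the exceptional null set $N$. The set $N$ collects the outcomes where Gale duality or general position fails, or where some column vanishes. On $N$, redefine $(U_i,V_i)$ to be a fixed deterministic configuration that satisfies \eqref{eq:gale_duality_special_case_orthant}. For instance, take any fixed Gale-dual pair in general position (one exists, since a.s.\ realizations exist) and normalize it. Changing random vectors on a null set does not alter their joint distribution, so (a) and (b) still hold, and (c) now holds pointwise everywhere.
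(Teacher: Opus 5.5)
Your proposal is correct and follows the paper's proof essentially step by step. You couple $\bfG,\bfH$ via Proposition~\ref{prop:gauss_proj_gauss_embed_coupled}, use almost-sure general linear position, apply Lemma~\ref{lem:faces-positive-spanning-intro-version}, and normalize the columns, which leaves all positive hulls unchanged. The only cosmetic difference is the exceptional null set: the paper restricts to a full-measure event, whereas you redefine the vectors on the null set by a fixed good configuration, and both devices are equally valid.
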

\begin{proof}
Let $e_1,\ldots, e_{d+k}$ be the standard orthonormal basis of $\R^{d+k}$. By Proposition~\ref{prop:gauss_proj_gauss_embed_coupled}, we may couple Gaussian projections $\bfG: \R^{d+k} \to \R^d$  and $\bfH: \R^{d+k} \to \R^k$ such that, for almost every realization of $(\bfG, \bfH)$,   the vectors $\bfG e_1,\ldots, \bfG e_{d+k}\in \R^d$ are in linear Gale duality with the vectors $\bfH e_1,\ldots, \bfH e_{d+k}\in \R^k$. Moreover, with probability $1$, both configurations are in general linear position.  After restricting to a full-measure event we may assume that these properties hold for \emph{all} outcomes in the probability space.

We can therefore apply Lemma~\ref{lem:faces-positive-spanning-intro-version} to the Gale-dual
configurations $(\bfG e_i)_{i\in[d+k]}$ and $(\bfH e_i)_{i\in[d+k]}$. It yields that for every set $I\subseteq [d+k]$ with $\# I \leq d-1$,
\begin{equation}\label{eq:duality_G_h_tech_123}
\pos(\bfG e_i: i\in I) \text{ is a face of } \pos(\bfG e_i: i\in [d+k])
\quad \Longleftrightarrow \quad
\pos (\bfH e_i: i\in I^c) = \R^{k}.
\end{equation}
Now we define $U_{i} := \bfG e_i / \|\bfG e_i\|$ and $V_i := \bfH e_i/ \|\bfH e_i\|$ for all $i=1,\ldots, d+k$. (Note that $\bfG e_i\neq 0$ and $\bfH e_i \neq 0$ by the linear general position property.) Then  (a) and (b) are satisfied. Also, for every $M\subseteq [d+k]$ we have
$$
\pos(\bfG e_i: i\in M)  = \pos (U_i: i\in M),
\qquad
\pos (\bfH e_i: i \in M) = \pos (V_i: i\in M).
$$
Substituting these identities into
\eqref{eq:duality_G_h_tech_123} yields \eqref{eq:gale_duality_special_case_orthant}, completing the
proof.
\end{proof}
\begin{remark}\label{rem:oriented_matroid_duality}
The coupling in Theorem~\ref{theo:gale_duality_stoch_orthant} is naturally interpreted in the language of oriented matroids~\cite{bjoerner_etal_book_oriented_matroids}.
Indeed, for each outcome of the underlying probability space, the vector configurations
\((U_1,\ldots,U_{d+k})\subseteq\R^d\) and \((V_1,\ldots,V_{d+k})\subseteq\R^k\) determine oriented matroids on the common ground set \([d+k]\),
and the construction in the proof yields that these oriented matroids are dual to one another (in the sense of Gale duality).
In the present paper, however, we do not make systematic use of oriented matroid theory: the only consequence of this duality
that will be needed is the correspondence stated in part~(c) of Theorem~\ref{theo:gale_duality_stoch_orthant}.
\end{remark}

\section{Two examples}
Theorem~\ref{theo:gale_duality_stoch_orthant} will serve as a key tool throughout the remainder of the paper.
We start by illustrating its use with two simple applications.

\subsection{Independence of face events}\label{subsec:face_events_independence}
Consider the random cone  $\cW_{n,d} = \pos (U_1,\ldots, U_n)$, where $U_1,\ldots, U_n$ are independent and uniformly distributed on the unit sphere in $\R^d$.
%For $\ell \in \{0,\ldots, d-1\}$, let $f_{\ell} (\cW_{n,d})$ be the number of $\ell$-dimensional faces of $\cW_{n,d}$. %A formula for $\E f_\ell(\cW_{n,d})$ is known; see  Proposition~\ref{prop:expected_face_number_donoho_tanner_cone}.
%In this section, we shall study the covariance matrix  of the random vector $(f_\ell(\cW_{n,d}))_{\ell = 0}^{d-1}$.
For a subset $I\subseteq [n]$ define the cone $F_I = \pos (U_i: i\in I)$ and the random event
$$
A_I:= \{F_I \text{ is a face of } \cW_{n,d}\}.
$$
With probability $1$, the vectors $U_1,\ldots, U_n$ are in general linear position (assuming $d\geq 2$). Thus, for $\ell \in \{0,\ldots, d-1\}$, every $\ell$-dimensional face of $\cW_{n,d}$ has the form $F_I$ for some $I\subseteq[n]$ with $\# I = \ell$, but not every $F_I$ is necessarily a face of $\cW_{n,d}$.  In particular, the number of $\ell$-dimensional faces of $\cW_{n,d}$ can be represented as
$$
f_{\ell} (\cW_{n,d}) = \sum_{I\subseteq [n], \#I = \ell} \ind_{A_I}.
$$
Somewhat unexpectedly, many of the face events $A_I$ turn out to be independent; the next theorem
makes this precise.
\begin{theorem}[Independence of face events]
Take $d\geq 3$ and $n\geq d+1$.
Let $I_1,\ldots, I_r\subseteq [n]$ be such that $I_s\cup I_t = [n]$ for all $1\leq s\neq t\leq r$ and $\#I_s \leq d-1$ for all $s\in [r]$. Then, the face events $A_{I_1},\ldots, A_{I_r}$ are independent.
\end{theorem}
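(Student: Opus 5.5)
We pass to the Gale dual side via Theorem~\ref{theo:gale_duality_stoch_orthant}, taking $k:=n-d\geq 1$. On the coupling space, the vectors $V_1,\ldots,V_n$ are i.i.d.\ uniform on $\bS^{k-1}$. Since $\#I_s\le d-1$, part~(c) of that theorem gives, for every $s\in[r]$ and every outcome,
\[
A_{I_s} = \{\pos(V_j: j\in I_s^c)=\R^k\}.
\]
The joint law of the indicators $(\ind_{A_{I_1}},\ldots,\ind_{A_{I_r}})$ depends only on the law of $(U_1,\ldots,U_n)$. We may therefore compute it on the coupling space, where it is a function of the $V_j$ alone.

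The key combinatorial observation is that $I_s\cup I_t=[n]$ for $s\neq t$ is equivalent to $I_s^c\cap I_t^c=\varnothing$. Hence the complements $I_1^c,\ldots,I_r^c$ are pairwise disjoint subsets of $[n]$. The event $\{\pos(V_j:j\in I_s^c)=\R^k\}$ is measurable with respect to $\sigma(V_j: j\in I_s^c)$. Since the $V_j$ are independent, the $\sigma$-algebras generated by disjoint blocks of them are independent (grouping lemma). The events $A_{I_1},\ldots,A_{I_r}$ are therefore independent on the coupling space, and hence in general, because independence is a property of the joint law.

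The only points to check are these:
\begin{itemize}
\item[(1)] Theorem~\ref{theo:gale_duality_stoch_orthant} applies with $k=n-d\ge1$, which holds since $n\geq d+1$.
\item[(2)] The equality of events holds on the full probability space, not just almost surely. The theorem already provides this, and a.s.\ equality would suffice anyway.
\item[(3)] Degenerate cases, such as $I_s^c=\varnothing$, cause no trouble. The event is then a deterministic (trivial) event, which is independent of everything.
\end{itemize}
The assumption $d\ge3$ is presumably needed only so that nontrivial families exist; general position of the $U_i$ holds a.s.\ for $d\ge 2$. The main, and really only, substantive step is the translation through Gale duality, which is supplied by the earlier theorem. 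I therefore expect no real obstacle beyond stating the disjointness of the complements carefully.
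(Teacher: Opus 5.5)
Your proof is correct and follows the paper's argument exactly. You couple via Theorem~\ref{theo:gale_duality_stoch_orthant} with $k=n-d$, rewrite each $A_{I_s}$ as $\{\pos(V_j:j\in I_s^c)=\R^{n-d}\}$, and use that the complements $I_1^c,\ldots,I_r^c$ are pairwise disjoint. Your degenerate case (3) never actually arises, since $\#I_s^c\ge n-d+1\ge 2$.
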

%%% d\geq 3 since d=1 and d=2 give empty statements. For d=2, the size of each I_s is at most 1 and the union $I_s\cup I_t$ is never $[n]$.
\begin{proof}
We use the coupling given in Theorem~\ref{theo:gale_duality_stoch_orthant} with $k=n-d$. Recall that the random vectors $V_1,\ldots, V_n$ appearing in that theorem are independent and uniformly distributed on the unit sphere in $\R^{n-d}$. For $J\subseteq [n]$ consider the cone $D_J = \pos (V_j: j\in J)\subseteq \R^{n-d}$.  By Theorem~\ref{theo:gale_duality_stoch_orthant},
\begin{equation}\label{eq:face_event_gale_dual}
A_I= \{F_I \text{ is a face of } \cW_{n,d}\} = \{D_{I^c}= \R^{n-d}\} \qquad  \text{ for all } I \subseteq [n] \text{ with } \#I \leq d-1.
\end{equation}
%Recall from Theorem~\ref{theo:gale_duality_stoch_orthant} that $A_I = \{D_{I^c} = \R^{n-d}\}$ for every $I\subseteq [n]$ with $\#I\leq d-1$.
Under our assumptions, the index sets $I_1^c,\ldots, I_r^c$ are disjoint. Hence the random events $\{D_{I_1^c} = \R^{n-d}\},\ldots, \{D_{I_r^c} = \R^{n-d}\}$ are independent.
\end{proof}

\subsection{Expected face counts}
As a simple consequence of  Theorem~\ref{theo:gale_duality_stoch_orthant}, we recover the following  result of \citet[Theorem~1.6]{donoho_tanner_orthants}. Recall that Wendel's formula~\cite{wendel} (see also~\cite[Theorem~8.2.1]{schneider_weil_book})  states that
\begin{equation}\label{eq:wendel_formula_repeat}
p(n,d) = \P[\cW_{n,d} = \R^d] =  \frac 1  {2^{n-1}} \sum_{\ell=d}^{n-1} \binom{n-1}{\ell}.
\end{equation}

\begin{proposition}[Expected face numbers]\label{prop:expected_face_number_donoho_tanner_cone}
For every  $d\geq 2$, $n\geq d+1$ and $\ell \in \{0,\ldots, d-1\}$, the expected number of $\ell$-dimensional faces of the random cone $\cW_{n,d}$ is
$$
\E f_\ell (\cW_{n,d}) = \binom{n}{\ell} p(n-\ell, n-d) = \frac{1}{2^{n-\ell-1}}\binom{n}{\ell}\sum_{r=n-d}^{n-\ell-1}\binom{n-\ell-1}{r}.
$$
Also, for every $I\subseteq [n]$ with $\# I= \ell \in \{0,\ldots, d-1\}$, we have $\P[A_I] = p(n-\ell, n-d)$.
\end{proposition}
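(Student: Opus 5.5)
We work in the coupling of Theorem~\ref{theo:gale_duality_stoch_orthant} with $k:=n-d\ge 1$. In this coupling $U_1,\ldots,U_n$ are i.i.d.\ uniform on $\bS^{d-1}$, and $V_1,\ldots,V_n$ are i.i.d.\ uniform on $\bS^{n-d-1}$. Fix $I\subseteq[n]$ with $\#I=\ell\le d-1$. Part~(c) of that theorem gives the identity of events
\[
A_I=\{\pos(V_j:j\in I^c)=\R^{n-d}\},
\]
which is exactly \eqref{eq:face_event_gale_dual}. The family $(V_j)_{j\in I^c}$ consists of $n-\ell$ i.i.d.\ uniform vectors on the unit sphere of $\R^{n-d}$. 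Hence, by the definition of $p(\cdot,\cdot)$ in \eqref{eq:wendel_formula_repeat},
\[
\P[A_I]=\P\bigl[\pos(V_j:j\in I^c)=\R^{n-d}\bigr]=p(n-\ell,n-d).
\]
This proves the second claim.

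For the expectation we use the representation $f_\ell(\cW_{n,d})=\sum_{\#I=\ell}\ind_{A_I}$, which was noted above and is valid almost surely by general linear position (using $d\ge2$). Taking expectations and applying linearity, each of the $\binom n\ell$ summands contributes $p(n-\ell,n-d)$. Substituting Wendel's formula \eqref{eq:wendel_formula_repeat} with $n$ replaced by $n-\ell$ and $d$ replaced by $n-d$ then gives the explicit sum $2^{-(n-\ell-1)}\sum_{r=n-d}^{n-\ell-1}\binom{n-\ell-1}{r}$.

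Two points need care.

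\emph{Degenerate case $k=1$.} Here the $V_j$ are uniform on $\bS^0=\{\pm1\}$. Wendel's formula should still hold in this case: $\pos$ of $m$ signs equals $\R$ unless all signs agree, which has probability $1-2^{1-m}$, and this matches the formula. We will check this explicitly.

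\emph{Distributional bookkeeping.} The law of $U_1,\ldots,U_n$ in the coupling is the required one, so probabilities computed there transfer to the original model; this uses that $A_I$ is a measurable function of the $U_i$. With the coupling theorem available, there is no genuine obstacle beyond these checks.
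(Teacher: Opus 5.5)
Your proposal is correct and follows the paper's proof essentially verbatim. Like the paper, you identify $A_I$ with $\{\pos(V_j:j\in I^c)=\R^{n-d}\}$ via the coupling of Theorem~\ref{theo:gale_duality_stoch_orthant}, apply Wendel's formula to the $n-\ell$ i.i.d.\ vectors $V_j$ in $\R^{n-d}$, and sum over the $\binom{n}{\ell}$ index sets $I$. Your extra checks are valid and only make explicit what the paper leaves implicit: that Wendel's formula also covers $k=1$, where it gives $1-2^{1-m}$, and that probabilities transfer from the coupling because the $U_i$ there have the correct law.
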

\begin{proof}
For every $I$ as above, Formula~\eqref{eq:face_event_gale_dual} gives
$$
\P[A_I] = \P[\pos (V_i: i\in I^c) = \R^{n-d}] = \P[\cW_{n-\ell,n-d} = \R^{n-d}] =  p(n-\ell, n-d),
$$
where the last step follows from Wendel's formula~\eqref{eq:wendel_formula_repeat}.
Taking the sum over all subsets $I \subseteq [n]$ with $\#I = \ell$    gives the stated formula for $\E f_\ell (\cW_{n,d})$.
\end{proof}

\begin{remark}
Taking up a suggestion of Gale, \citet{Schneider2021RandomGale} proposed to study random polytopes $P$ obtained by choosing their (affine) Gale transform at random. (Throughout this remark, we use the term ``polytope'' to refer to its \emph{combinatorial type}; only the combinatorial type is canonically determined by the Gale transform.)  The model of \citet[p.~643]{Schneider2021RandomGale} depends on an even probability measure $\phi$; in the present remark we take $\phi$ to be the uniform distribution on the unit sphere in $\R^k$. We put $n= d+k$.
Using the notation of Theorem~\ref{theo:gale_duality_stoch_orthant}, \citet{Schneider2021RandomGale} considers a $(d-1)$-dimensional random polytope $P$ with $n$ vertices whose (affine) Gale transform is given by $\lambda_1 V_1,\ldots,\lambda_n V_{n}\in \R^k$ (with suitable $\lambda_j >0$) conditioned on the event $\pos(V_1,\ldots,V_{n}) = \R^{k}$.
By Example~\ref{ex:I-empty}, this is the same conditioning event as $\pos(U_1,\ldots,U_n)\neq \R^d$.
It then follows from Theorem~\ref{theo:gale_duality_stoch_orthant} that the random cone $\cW_{n,d}=\pos(U_1,\ldots,U_n)$ conditioned on the event $\{\cW_{n,d}\neq \R^d\}$, can be identified with the cone over $P$. Conversely, $P$ can be recovered as a bounded slice (cross-section) of this conditioned random cone. Such conditioned cones are called \emph{Cover--Efron cones} in~\cite{hug_schneider_conical_tessellations}.
Consequently, the random polytopes studied in~\citet{Schneider2021RandomGale} have the same distribution as the bounded cross-sections of Cover--Efron cones considered in~\cite{hug_schneider_conical_tessellations}. (As noted in~\cite[Remark on p.~648]{Schneider2021RandomGale}, these two objects in particular have the same expected face numbers.)
\end{remark}

\section{Moments of random angles}\label{sec:moments_angles}

\subsection{Duality for moments of random angles}
The \emph{solid angle} of a polyhedral cone with non-empty interior $C\subseteq \R^d$  is defined as
$
\alpha(C) = \nu_d(C\cap \bS^{d-1}) \in [0,1],
$ where $\nu_d$ is the uniform probability distribution on $\bS^{d-1}$.

Let $U_1,\ldots, U_d$ be i.i.d.\ random unit vectors uniformly distributed on the unit sphere in $\R^d$, $d\in \N$. The solid angle of the cone $\cW_{d,d} =\pos (U_1,\ldots, U_d)$  is a random variable $\alpha_d := \alpha (\pos (U_1,\ldots, U_d))$ taking values in the interval $[0, 1/2]$. %Note that the values ``close'' to $1/2$ are attained if the cone is ``close'' to a half-space.
We are interested in the moments of $\alpha_d$, which we denote by
$$
m(d,k):= \E [\alpha_d^k], \qquad d,k\in \N.
$$
Clearly,  $m(d,1) = \E \alpha_d = 2^{-d}$.  (Indeed, the cones $\pos (\eps_1 U_1,\ldots, \eps_d U_d)$, with $\eps_j\in \{\pm 1\}$, have disjoint interiors, cover $\R^d$ and have the same expected angle.)
%We are interested in the higher moments.

\begin{theorem}[Duality for random solid angles]\label{theo:duality_moments_angles}
For all $d,k\in \N$ we have $m(d,k) = m(k,d)$.
\end{theorem}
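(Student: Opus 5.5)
The plan is to express $m(d,k)$ as the probability of a face event and then apply the Gale coupling. Let $W_1,\ldots,W_k$ be i.i.d.\ uniform on $\bS^{d-1}$, independent of $U_1,\ldots,U_d$. Conditionally on the $U$'s, each $W_j$ lies in $\pos(U_1,\ldots,U_d)$ with probability $\alpha_d$, so
\[
m(d,k)=\P\bigl[W_1,\ldots,W_k\in\pos(U_1,\ldots,U_d)\bigr].
\]
With $n=d+k$ vectors in $\R^d$, I will rephrase this event combinatorially. Almost surely everything is in general linear position. Then $W_j\in\pos(U_1,\ldots,U_d)$ for all $j$ holds if and only if $\pos(U_1,\ldots,U_d,W_1,\ldots,W_k)=\pos(U_1,\ldots,U_d)$. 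This is a simplicial cone whose facets are exactly the $\pos(U_i:i\in[d]\setminus\{m\})$. So the event is equivalent to: for every $m\in[d]$, the set $\pos(U_i:i\in[d]\setminus\{m\})$ is a $(d-1)$-face of the full cone $\cW_{d+k,d}$.

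For the converse, suppose all these $d$ sets are facets. Then every $W_j$ lies on the inner side of each of the $d$ supporting hyperplanes, hence in the simplicial cone $\pos(U_1,\ldots,U_d)$. I need to make sure the facet's supporting hyperplane has the whole cone on one side, including the other $U$'s. That is automatic, since the face property gives exactly that the cone lies on one side.

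Now I apply Theorem~\ref{theo:gale_duality_stoch_orthant} with $n=d+k$, treating the $n$ vectors as $U_1,\ldots,U_{d+k}$ (the last $k$ being the $W$'s). Each index set $I=[d]\setminus\{m\}$ has size $d-1$, so condition (c) applies. Its complement is $I^c=\{m\}\cup\{d+1,\ldots,d+k\}$. Hence, with $V_1,\ldots,V_{d+k}$ i.i.d.\ uniform on $\bS^{k-1}$,
\[
m(d,k)=\P\bigl[\pos(V_m,V_{d+1},\ldots,V_{d+k})=\R^k \text{ for all } m\in[d]\bigr].
\]
Put $C=\pos(V_{d+1},\ldots,V_{d+k})$, a simplicial cone in $\R^k$ (a.s.). The set $\pos(C\cup\{V_m\})$ equals $\R^k$ if and only if $-V_m$ lies in the interior of $C$. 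To see this, note that $C$ has $k$ generators. Adding one vector positively spans $\R^k$ exactly when some positive combination of it with the generators vanishes with all coefficients positive, i.e.\ when $-V_m\in\Int C$; the $k+1$ vectors in general position are then a positive basis. Since $-V_m$ is again uniform and the $V_m$ are independent given $C$, conditioning on $V_{d+1},\ldots,V_{d+k}$ gives probability $\alpha(C)^d$. Since $\alpha(C)\eqdistr\alpha_k$, this yields $m(d,k)=\E[\alpha_k^d]=m(k,d)$.

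The main obstacle is the careful equivalence in the second paragraph: both the ``$\Leftarrow$'' direction and the exclusion of degenerate cases, such as $\cW_{d+k,d}=\R^d$ or $W_j$ on the boundary, which are null events. Also needed is the elementary positive-spanning criterion for $k+1$ vectors in $\R^k$. If $d=1$ or $k=1$, the index-set condition $\#I\le d-1$ still holds ($I=\varnothing$ when $d=1$), so no special case should arise. Still, I would check $d=1$ directly: $m(1,k)=2^{-k}$, and $m(k,1)=2^{-k}$ by the formula $\E\alpha_k=2^{-k}$.
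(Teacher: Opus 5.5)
Your proposal is correct and follows essentially the same route as the paper. You read $m(d,k)$ as the probability that $k$ extra uniform vectors fall into $\pos(U_1,\ldots,U_d)$, and rewrite this as the event that every facet $\pos(U_i:i\in[d]\setminus\{m\})$ remains a facet of $\cW_{d+k,d}$. Applying Theorem~\ref{theo:gale_duality_stoch_orthant} turns it into $\pos(V_m,V_{d+1},\ldots,V_{d+k})=\R^k$ for all $m\in[d]$, which you identify with $-V_1,\ldots,-V_d\in\Int\pos(V_{d+1},\ldots,V_{d+k})$, giving $m(k,d)$. Your added care on the converse of the facet equivalence and on the cases $d=1$, $k=1$ fills in details the paper leaves under ``ignoring null events.''
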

\begin{proof}
Let $U_1,\ldots, U_d, U_{d+1}, \ldots, U_{d+k}$ be i.i.d.\ random unit vectors with the uniform distribution on $\bS^{d-1}$.
We shall compute the probability of the event
$U_{d+1},\ldots, U_{d+k} \in \pos (U_1,\ldots, U_d)$
in two different ways.

\vspace*{2mm}
\noindent
\textit{Primal method}.
Conditioning on $U_1,\ldots,  U_d$, we have, by definition of the solid angle,
$$
\P[U_{d+1},\ldots, U_{d+k} \in \pos (U_1,\ldots, U_d) \,|\, U_1,\ldots,  U_d] =  \left(\alpha (\pos (U_1,\ldots, U_d))\right)^k.
$$
Integrating over $U_1,\ldots, U_d$ we arrive at
\begin{align*}
\P[U_{d+1},\ldots, U_{d+k} \in \pos (U_1,\ldots, U_d)]
=
\E [(\alpha (\pos(U_1,\ldots, U_d)))^k]
=
m(d,k).
\end{align*}

\vspace*{2mm}
\noindent
\textit{Dual method.}
%Let  $U_1,\ldots, U_{d+k}$ be independent and uniform on the unit sphere in $\R^d$ and $V_1, \ldots, V_{d+k}$ independent and uniform on the unit sphere in $\R^k$, coupled on a common probability space as in Theorem~\ref{theo:gale_duality_stoch_orthant}.
%Observe that $U_{d+1},\ldots, U_{d+k} \in \pos (U_1,\ldots, U_d)$ if and only if $\pos (U_1,\ldots, U_{d+k})\cap \bS^{d-1}$ is a spherical simplex with vertices $U_1,\ldots, U_d$, ignoring null events.
By Theorem~\ref{theo:gale_duality_stoch_orthant}, we may realize $(U_1,\ldots,U_{d+k})$ together
with $(V_1,\ldots,V_{d+k})$ on a common probability space such that the duality
\eqref{eq:gale_duality_special_case_orthant} holds. In the following we agree to ignore null events.
On the event that $\pos(U_1,\ldots,U_d)\cap \bS^{d-1}$ is a spherical simplex (which holds a.s.), the
condition $U_{d+1},\ldots,U_{d+k}\in\pos(U_1,\ldots,U_d)$ is equivalent to requiring that all
facets of $\pos(U_1,\ldots,U_d)$ remain facets of $\pos(U_1,\ldots,U_{d+k})$. Hence,
\begin{align*}
\lefteqn{\P\Bigl[U_{d+1},\ldots, U_{d+k} \in \pos (U_1,\ldots, U_d)\Bigr]}
\\
&=
\P\Bigl[\forall \ell\in[d]:\ \pos(U_i:i\in[d]\setminus\{\ell\}) \text{ is a facet of }
\pos(U_1,\ldots, U_{d+k})\Bigr]\\
&=
\P\Bigl[\forall \ell\in[d]:\ \pos(V_i:i\in([d]\setminus\{\ell\})^{c})=\R^{k}\Bigr],
\end{align*}
where we used~\eqref{eq:gale_duality_special_case_orthant}. Since
$([d]\setminus\{\ell\})^{c}=\{\ell,d+1,\ldots,d+k\}$, this becomes
\[
\P[U_{d+1},\ldots, U_{d+k} \in \pos (U_1,\ldots, U_d)]
=
\P\Bigl[\forall \ell\in[d]:\ \pos(V_\ell,V_{d+1},\ldots,V_{d+k})=\R^{k}\Bigr].
\]
Now, we observe that $\pos (V_\ell, V_{d+1},\ldots, V_{d+k}) = \R^k$ if and only if $V_\ell \in -\Int \pos ( V_{d+1},\ldots, V_{d+k})$,  ignoring null events.  Thus,
\begin{align*}
\P[U_{d+1},\ldots, U_{d+k} \in \pos (U_1,\ldots, U_d)]
&=
\P[V_1,\ldots, V_d \in -\pos ( V_{d+1},\ldots, V_{d+k})]
\\
&=
\E [(\alpha (\pos(V_{d+1},\ldots, V_{d+k})))^d]
\\
&=
m(k,d).
\end{align*}
Comparing the results obtained by both methods, we conclude that $m(d,k) = m(k,d)$.
\end{proof}

\subsection{Moments of small order}
Theorem~\ref{theo:duality_moments_angles} allows us to find explicit formulas for the first three moments of $\alpha_d$.
\begin{theorem}[Moments of random angles]\label{theo:random_angle_three_moments}
For every $d\in \N$, the first three moments of the random variable $\alpha_d := \alpha (\pos (U_1,\ldots, U_d))$ are given by
\begin{align}
&\E [\alpha_d] = m(d,1)  = m(1,d) =  2^{-d},
\\
&\E [\alpha_d^2]  = m(d,2) =  m(2,d) =  \frac{1}{2^d (d+1)}, \label{eq:moments_angle_2}
\qquad
\Var [\alpha_d]  = \frac{1}{2^d (d+1)} - \frac 1 {4^d},
\\
& \E [\alpha^3_d ] = m(d,3) = m(3,d) = \frac 1 {(4\pi)^d}\E [S^d] =  \frac 1 {(4\pi)^d} \int_0^{2\pi}  x^d f_S(x)\dd x, \label{eq:moments_angle_3}
\end{align}
where $S$ is a random variable with probability density
\begin{align*}
f_S(x)
=
-\frac{(x^2 - 4\pi x + 3\pi^2 - 6)\cos x
       - 6(x - 2\pi)\sin x
       - 2(x^2 - 4\pi x + 3\pi^2 + 3)}
      {16\pi\,\cos^4(x/2)},
\qquad
0 \leq  x \leq  2\pi.
\end{align*}
\end{theorem}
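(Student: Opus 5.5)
The plan is to use the symmetry $m(d,k)=m(k,d)$ from Theorem~\ref{theo:duality_moments_angles} to swap the roles of $d$ and $k$. The quantity $m(k,d)$ is the $d$-th moment of the solid angle of a random cone in the low dimension $k\in\{1,2,3\}$. In these dimensions the solid angle is an explicit functional of the generating vectors.

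\textbf{The case $k=1$.} Here $\pos(U_1)$ for a single $U_1\in\bS^0=\{\pm1\}$ is a half-line, whose solid angle in $\R^1$ is $1/2$ deterministically. Hence $m(1,d)=2^{-d}$, which matches the direct argument.

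\textbf{The case $k=2$.} Take two independent uniform unit vectors $V_1,V_2\in\bS^1$. The cone $\pos(V_1,V_2)$ is a planar sector with opening angle $\theta\in[0,\pi]$. Its normalized solid angle is $\theta/(2\pi)$. The angle between two independent uniform directions is uniform on $[0,\pi]$, so $\theta/(2\pi)$ is uniform on $[0,1/2]$. Therefore
\[
m(2,d)=\int_0^{1/2}2x^d\,\dd x=\frac{2\cdot 2^{-(d+1)}}{d+1}=\frac{1}{2^d(d+1)}.
\]
The variance then follows by subtracting $4^{-d}$.

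\textbf{The case $k=3$.} This is the substantial step. Take three independent uniform points $V_1,V_2,V_3\in\bS^2$. The spherical triangle they span has area given by Girard's theorem, $A=\beta_1+\beta_2+\beta_3-\pi$, where the $\beta_i$ are its interior angles. The normalized solid angle is $A/(4\pi)$. Setting $S:=A$ gives
\[
m(3,d)=\E\bigl[(S/(4\pi))^d\bigr],
\]
with $S\in[0,2\pi]$, since the triangle lies in a hemisphere almost surely. It then remains to identify the density $f_S$ of the area of a random spherical triangle.

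For this I would use a classical approach. Condition on $V_3$ being the north pole, so that by rotation invariance $V_1,V_2$ are independent uniform points. Parametrize $V_1,V_2$ by their polar distances and their azimuth difference. Then use the explicit formula
\[
\tan(A/2)=\frac{|\det(V_1,V_2,V_3)|}{1+\langle V_1,V_2\rangle+\langle V_2,V_3\rangle+\langle V_3,V_1\rangle}
\]
(Van Oosterom--Strackee) to compute the distribution function $\P[S\le x]$. Equivalently, one can compute the characteristic function or Laplace transform of $S$ via an integral over these coordinates.

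The main obstacle is this explicit integration yielding the stated trigonometric expression with the $\cos^4(x/2)$ denominator. I would first change variables to the half-angle $t=\tan(A/2)$, since the formula above is rational in the coordinates. Then I would integrate out the azimuth using the uniformity of $\cos$ of polar distances on $[-1,1]$ (Archimedes). Finally, I would check the resulting candidate density by verifying that it integrates to $1$, has mean $\E S=4\pi\cdot 2^{-3}\cdot\ldots$ consistent with $m(3,1)=1/8$ (so $\E S=\pi/2$), and has second moment consistent with $m(3,2)=m(2,3)=1/32$ (so $\E S^2=\pi^2/2$). These checks serve as sanity tests before differentiating $\P[S\le x]$ to obtain $f_S$.
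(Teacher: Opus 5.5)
Your treatment of $k=1$ and $k=2$ is exactly the paper's proof. So is the reduction $m(3,d)=\E\bigl[(S/(4\pi))^d\bigr]$, with $S$ the area of the spherical triangle $\pos(V_1,V_2,V_3)\cap\bS^2$.

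The gap is the last step: the stated density $f_S$ is never actually established. You outline a route: put $V_3$ at the north pole, use the Van Oosterom--Strackee formula, and integrate out the coordinates of $V_1,V_2$. But you do not carry out the integration that produces the expression with the $\cos^4(x/2)$ denominator. The checks you propose cannot stand in for it. Your target values are correct: total mass $1$, $\E S=\pi/2$ and $\E S^2=\pi^2/2$, which follow from $m(3,1)=1/8$ and $m(3,2)=m(2,3)=1/32$. However, infinitely many densities on $[0,2\pi]$ share these three moments. So the checks can only rule out a wrong candidate; they cannot certify the right one.

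The paper does not derive $f_S$ either. It invokes the classical result of Crofton and Exhumatus \cite{CroftonExhumatus1867} on the area of a random spherical triangle, with proofs in \cite{philip_random_spherical_triangle_2014} and \cite{finch_jones_random_spherical_triangles_2010}. To close the argument, either cite that result, or carry the computation through in full.

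If you compute it yourself, note one technical point. Since $S/2\in(0,\pi)$ and the denominator $1+\langle V_1,V_2\rangle+\langle V_2,V_3\rangle+\langle V_3,V_1\rangle$ can be negative, the map $S\mapsto\tan(S/2)$ is not monotone on the range of $S$; it jumps at $S=\pi$. Hence $\{S\le x\}$ is not $\{\tan(S/2)\le\tan(x/2)\}$ for $x>\pi$. It is cleaner to use
\[
\cot(S/2)=\bigl(1+\langle V_1,V_2\rangle+\langle V_2,V_3\rangle+\langle V_3,V_1\rangle\bigr)/|\det(V_1,V_2,V_3)|,
\]
which is strictly decreasing in $S$. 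Then $\P[S\le x]=\P[\cot(S/2)\ge\cot(x/2)]$, and you can differentiate to obtain $f_S$.
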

\begin{proof}
%To show that $\E[\alpha_d] =2^{-d}$, observe that the cones $\pos (\eps_1 U_1,\ldots, \eps_d U_d)$, with $\eps_j\in \{\pm 1\}$, have disjoint interiors, cover $\R^d$ and have the same expected angle. Alternatively, observe that $m(1,d)= 2^{-d}$  since  $V_1$ is uniform on $\{\pm 1\}$ and hence $\pos (V_1)$ is a ray with angle $1/2$.
To give an alternative proof that $\E[\alpha_d] =2^{-d}$, observe that $m(1,d)= 2^{-d}$  since  $V_1$ is uniform on $\{\pm 1\}$ and hence $\pos (V_1)$ is a ray with angle $1/2$.

To prove~\eqref{eq:moments_angle_2},  it suffices to compute $m(2,d)$.  If $V_1, V_2$ are independent and uniformly distributed on the unit circle in $\R^2$, then the random angle $\alpha (\pos(V_1,V_2))$ is uniformly distributed on $[0,1/2]$. The $d$-th moment of this random variable is $2^{-d}\cdot 1/(d+1)$, and the claim follows.

Finally, the formula for $m(3,d)$ follows from the following non-trivial fact due to Crofton and Exhumatus~\cite{CroftonExhumatus1867}: The spherical area $S$ of the random spherical triangle with vertices $V_1,V_2,V_3$ that are independent and uniform on $\bS^2$ has probability density $f_S$ given as above. Proofs can be found in~\cite{philip_random_spherical_triangle_2014} and~\cite{finch_jones_random_spherical_triangles_2010}; see also~\cite{SemiclassicalRavenclawPrefect2021} and~\cite{miles_random_points_sphere}. Then $\alpha(\pos (V_1,V_2,V_3)) = S/(4\pi)$ and $m(3,d) = \E [S^d/(4\pi)^d]$.
\end{proof}
%\begin{example}[First moment]
%Let us verify directly that $m(d,1) = m(1,d) = 2^{-d}$.
%The expected angle of $\pos (U_1,\ldots, U_d)$ is given by
%$$
%m(d,1) = \E [\alpha (\pos(U_1,\ldots, U_d))] = 2^{-d}
%$$
%since the cones $\pos (\eps_1 U_1,\ldots, \eps_d U_d)$ have disjoint interiors, cover $\R^d$ and have the same expected angle. On the other hand, $m(1,d)= 2^{-d}$  since if $V_1$ is uniform on $\{\pm 1\}$, then $\pos (V_1)$ is a ray with angle $1/2$.
%\end{example}

%\begin{example}[Second moment]\label{exam:random_angle_second_moment}
%We claim that
%$$
%m(d,2) = m(2,d) =  \frac{1}{2^d (d+1)}, \qquad d\in \N.
%$$
%It suffices to compute $m(2,d)$.  Since $V_1, V_2$ are independent and uniformly distributed on the unit circle in $\R^2$, the random angle $\alpha (\pos(V_1,V_2))$ is uniformly distributed on $[0,1/2]$. The $d$-th moment of this random variable is $2^{-d}\cdot 1/(d+1)$, and the claim follows.
%\end{example}

%\begin{corollary}
%The variance of the random angle is given by
%$$
%\Var \alpha (\pos (U_1,\ldots, U_d)) = \frac{1}{2^d (d+1)} - \frac 1 {4^d}, \qquad d\in \N.
%$$
%\end{corollary}

%\begin{conjecture}
%As $d\to\infty$, the random variable $$
%\end{conjecture}
\begin{remark}[The third moment]\label{rem:third_moment_angle}
Explicit formulas for $\E [S^d]$  with $d\leq 10$ are given in~\cite{SemiclassicalRavenclawPrefect2021}. (In the formula for $\E[S^4]$, a factor of $108$ is missing in front of $\zeta(3)$.)  These formulas give the trivial values $\E[\alpha_1^3] = \frac{1}{8}$, $\E[\alpha_2^3]= \frac{1}{32}$ (since $\alpha_2$ is uniform on $[0,1/2]$) as well as
$$
\E[\alpha_3^3]
= \frac{3}{128} \;-\; \frac{3\log 2}{16\pi^2},
\qquad
\E[\alpha_4^3]
= \frac{\pi^4 - 6\pi^2\log 2 - 27\,\zeta(3)}{64\,\pi^4},
  \qquad
\E[\alpha_5^3]
= \frac{5}{512} \;-\; \frac{15\log 2}{128\pi^2}.
$$
%Chat GPT: Using the substitution $t= \tan (x/2)$, the integral in~\eqref{eq:moments_angle_3} can be reduced to a sum of the integrals of the form $\int_{-\infty}^\infty t^\ell \arctan^k x$. It can be shown that $\E[S^d]$ is a linear combination (with rational coefficients) of $\pi^d$, $\pi^{d-2} \log 2$ and $\pi^{d-2m-1}\zeta (2m+1)$, $m=1,\ldots, \lfloor d/2\rfloor-1$.
\end{remark}

\begin{remark}[Asymptotics of the third moments]\label{rem:asymptotic_third_moment_solid_angle}
Since $f_S(2\pi) = \frac{12 - \pi^2}{16\pi}>0$ and $f_S(x)$ is continuous at $x= 2\pi$, the standard Laplace asymptotics gives
\[
\E [\alpha^3_d ] = \frac 1 {(4\pi)^d} \E [S^d]
\sim
\frac 1 {(4\pi)^d} \frac{f_S(2\pi)\,(2\pi)^{d+1}}{d+1}
\sim
\frac{12 - \pi^2}{8}
\frac{1}{2^d d},
\qquad
d\to\infty.
\]
\end{remark}

\begin{remark}
%In this section, we considered the cone $\cW_{n,d}$ with $n=d$ only. Our methods do not extend to $n>d$. For example, it remains open to compute the variance of $\alpha (\cW_{d+1,d})$.
In this section, we considered the cone $\cW_{n,d}$ with $n=d$ only. Our methods do not allow to compute the variance of $\alpha (\cW_{n,d})$ for $n>d$. What we can show is that for $n\geq d$,
\begin{equation}\label{eq:duality_second_moment_angle_f_1}
\E [\alpha^2 (\cW_{n,d})] = \frac {\E [f_1^2(\cW_{n+2, n+2-d})] - \E [f_1(\cW_{n+2, n+2-d})]} {(n+2)(n+1)}.
\end{equation}
\end{remark}

\begin{proof}[Proof of~\eqref{eq:duality_second_moment_angle_f_1}]
Let $k\in \N$.
By Theorem~\ref{theo:gale_duality_stoch_orthant}, we may realize i.i.d.\ random vectors $U_1,\ldots,U_{n+k}\sim \mathrm{Unif}(\mathbb S^{d-1})$ together the i.i.d.\ random vectors $V_1,\ldots,V_{n+k}\sim \mathrm{Unif}(\mathbb S^{n+k-d-1})$ on a common probability space such that the duality~\eqref{eq:gale_duality_special_case_orthant}
 holds.  We have
\begin{align*}
\E [\alpha^k(\cW_{n,d})]
&=
\P \left[
U_{n+1},\ldots,U_{n+k}\in -\pos(U_1,\ldots,U_n)
\right]
\\
&=
\P \left[\pos (U_1,\ldots,U_n,U_{n+i})
=
\mathbb R^d
\ \forall i=1,\ldots,k
\right].
\end{align*}
Applying the duality~\eqref{eq:gale_duality_special_case_orthant}, with $U_i$'s and $V_j$'s interchanged, to the right-hand side gives
\[
\E [\alpha^k(\cW_{n,d})]
=
\P\left[
\pos\bigl(\{V_{n+1},\ldots,V_{n+k}\}\setminus\{V_{n+i}\}\bigr)
\text{ is a face of }
\operatorname{pos}(V_1,\ldots,V_{n+k})
\ \forall i=1,\ldots,k
\right].
\]
%If \(n=d\), the condition becomes
%\[
%\operatorname{pos}\!\bigl(\{V_{d+1},\ldots,V_{d+k}\}\setminus\{V_{d+i}\}\bigr)
%\text{ is a face of }
%\operatorname{pos}(V_1,\ldots,V_{d+k})
%\subset \mathbb R^k.
%\]
If \(k=2\), the condition on the  right-hand side states that  $\operatorname{pos}(V_{n+1})$ and $\operatorname{pos}(V_{n+2})$
are faces of $\operatorname{pos}(V_1,\ldots,V_{n+2})$. So, using exchangeability,
\begin{align*}
\E [\alpha^2(\cW_{n,d})]
&=
\P \left[
\pos(V_{n+1}), \pos(V_{n+2})\in
\mathcal F_1 \left(
\pos(V_1,\ldots,V_{n+2})
\right)
\right]
\\
&=
\frac{1}{(n+2)(n+1)}
\,
\E
\!
\sum_{\substack{i\neq j\\ i,j\in\{1,\ldots,n+2\}}}
\ind_{\{\pos(V_i) \in \mathcal F_1(\pos(V_1,\ldots,V_{n+2}))\}}
\,
\ind_{\{\pos(V_j)\in \mathcal F_1(\pos(V_1,\ldots,V_{n+2}))\}}
\\
&=
\frac{1}{(n+2)(n+1)}
\,
\mathbb E\!\left[
f_1^2\left(\pos(V_1,\ldots,V_{n+2})\right)
-
f_1\left(\pos(V_1,\ldots,V_{n+2})\right)
\right],
\end{align*}
and the proof is complete since $\pos(V_1,\ldots,V_{n+2})$ has the same law as $\cW_{n+2, n+2-d}$.
\end{proof}

%\begin{example}[Third moment]
%We claim that
%$$
%m(d,3) = m(3,d) = \frac 1 {(4\pi)^d}\E [S^d] =  \frac 1 {(4\pi)^d} \int_0^{2\pi}  x^d f_S(x)\dd x
%$$
%where $S$ is a random variable with probability density
%\begin{align*}
%f_S(x)
%&=
%-\frac{(x^2 - 4\pi x + 3\pi^2 - 6)\cos x
%       - 6(x - 2\pi)\sin x
%       - 2(x^2 - 4\pi x + 3\pi^2 + 3)}
%      {16\pi\,\cos^4(x/2)},
%\qquad
%0 \leq  x \leq  2\pi.
%\end{align*}
%Since $f_S(2\pi) = \frac{12 - \pi^2}{16\pi}>0$ and $f_S(x)$ is continuous at $x= 2\pi$, the standard Laplace asymptotics gives
%\[
%\E[S^n]
%\sim
%\frac{f_S(2\pi)\,(2\pi)^{\,n+1}}{n+1}
%=
%\frac{12 - \pi^2}{16\pi}\,
%\frac{(2\pi)^{\,n+1}}{n+1}
%\]
%\end{example}

\subsection{Conjectures on high-dimensional random angles}\label{subsec:conjectures_high_dim_angles}
It seems difficult to compute $m(d, k)= m(k,d)$ with $k,d\geq 4$.  Here we state a conjecture on the asymptotics of $m(d,k)$ as $d\to\infty$ with fixed $k\geq 2$. Let $V_1,\ldots, V_k$ be independent and uniform on $\bS^{k-1}$.  The maximal possible value of the random variable $\alpha(\pos (V_1,\ldots, V_k))$ is $1/2$. An analysis of configurations of $k$ unit vectors in $\R^k$ close to being linearly dependent suggests that the density of $\alpha(\pos (V_1,\ldots, V_k))$ at $1/2$ should be positive, which (arguing as in Remark~\ref{rem:asymptotic_third_moment_solid_angle})  leads to the following conjecture.

\begin{conjecture}[Asymptotics for higher moments]\label{conj:asymptotics_moments_angle}
For every fixed $k\in \{2,3,\ldots\}$, we have $m(d,k) \sim c_k/ (2^d \cdot d)$ as $d\to\infty$, where $c_k>0$ is a constant. For example, $c_2 = 1$ and $c_3 = (12-\pi^2)/8$.  (Note that the case $k=1$ is different since $m(d,1) = 2^{-d}$.)
\end{conjecture}

This behavior of moments suggests that, for large $d$, the random variable $\alpha_d = \alpha(\pos (U_1,\ldots, U_d))$ can be approximated by a mixture of a random ``background'' of size $2^{-d}$  and rare random ``spikes'' with size of order $1$ and probability of order $1/(d\cdot 2^{d})$. More precisely, let $A_d$ be an event with $\P[A_d] = 1/(d\cdot 2^{d})$ and let $X,Y\geq 0$ be random variables with finite moments. Then, the random variables $Z_d:=(1-\ind_{A_d}) 2^{-d} X + \ind_{A_d} Y$ have moments whose large-$d$ behavior is similar to that of $\alpha_d$, namely $\E \,[Z_d] \sim 2^{-d} \,\E\, [X]$ as $d\to\infty$ and $\E \,[Z_d^k] \sim \E \,[Y^k]/(d\cdot 2^d)$ for $k\geq 2$.   This suggests the following conjectures.

\begin{conjecture}[Weak limit for random high-dimensional angle]
As $d\to\infty$, the random variables $2^d \alpha_d$ converge in distribution to some random variable $X \geq 0$ with $\E \,[X] = 1$.
%($X$ may be may constant $1$).
\end{conjecture}

\begin{conjecture}[Rare spikes for high-dimensional random angle]
The sequence of measures $(\mu_d)_{d\in \N}$ with   $\mu_d(A) := 2^d d \cdot \P[\alpha_d \in A]$, $A\subseteq (0,\infty)$, converges weakly to a non-trivial limit.
\end{conjecture}

\section{A spherical Sylvester problem}\label{sec:spherical_sylvester}
Take some $d\geq 2$, $k\in \N$, and let $U_1,\ldots, U_{d+k}$ be i.i.d.\ random unit vectors uniformly distributed on the unit sphere in $\R^d$.  We consider the random spherical polytope
$$
\cP_{d+k,d} := \cW_{d+k,d}\cap \bS^{d-1} = \pos (U_1,\ldots, U_{d+k}) \cap \bS^{d-1}.
$$
Wendel's formula~\eqref{eq:wendel_formula} gives the probability that $\cP_{d+k,d}$ is the whole sphere.
\begin{proposition}[Probability of spherical simplex]\label{prop:probab_spherical_simplex}
%Take some $d,k\in \N$ and let $U_1,\ldots, U_{d+k}$ be i.i.d.\ random unit vectors uniformly distributed on the unit sphere in $\R^d$.
%Then,
For $d\geq 2$, $k\in \N$, the probability that $\cP_{d+k,d}$
%$\pos (U_1,\ldots, U_{d+k}) \cap \bS^{d-1}$
is a spherical simplex (with $d$ vertices) equals
$$
\binom{d+k}{d} m(d,k) = \binom{d+k}{k} m(k,d).
$$
\end{proposition}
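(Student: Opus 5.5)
The plan is to decompose the event according to which $d$ of the $d+k$ vectors are the vertices of the simplex.

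\emph{Decomposition.} Discard null events, so that the vectors are in general linear position. Then $\cP_{d+k,d}$ is a spherical simplex with $d$ vertices exactly when there is a $d$-element set $J\subseteq[d+k]$ such that
\[
U_i\in\pos(U_j:j\in J)\quad\text{for all } i\notin J .
\]
In that case $\cW_{d+k,d}=\pos(U_j:j\in J)$ is a simplicial cone.

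\emph{Disjointness.} I will show that such a $J$ is unique almost surely. If $\cW_{d+k,d}=\pos(U_j:j\in J)$ is pointed and simplicial, its extreme rays are exactly the rays through $U_j$, $j\in J$. A vector $U_i$ with $i\notin J$ cannot span an extreme ray, because by general position it does not lie on the ray through any $U_j$. So $J$ is determined as the set of extreme rays. Hence the events
\[
E_J=\{U_i\in\pos(U_j:j\in J)\ \forall i\notin J\}
\]
are almost surely disjoint over the $\binom{d+k}{d}$ choices of $J$.

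\emph{Summing probabilities.} By exchangeability, each $E_J$ has the probability of the event with $J=[d]$. That probability was computed in the primal method of the proof of Theorem~\ref{theo:duality_moments_angles}: conditioning on $U_1,\ldots,U_d$ gives $\alpha(\pos(U_1,\ldots,U_d))^k$, and taking expectations gives $m(d,k)$. Summing over $J$ yields $\binom{d+k}{d}m(d,k)$. The second expression follows from $\binom{d+k}{d}=\binom{d+k}{k}$ together with Theorem~\ref{theo:duality_moments_angles}.

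\emph{Main obstacle.} The only delicate step is the disjointness and uniqueness claim. It relies on almost sure general position, which rules out $U_i$ lying on a ray $\pos(U_j)$ or on the boundary of the cone. It also uses $d\ge2$: for $d=1$ the cone $\pos(U_j)$ is a ray and the uniqueness could fail through coincidences $U_i=U_j$, which have positive probability in $\bS^0$.
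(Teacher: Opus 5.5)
Your proposal is correct and is essentially the paper's proof. The paper splits the event according to the almost surely unique (for $d\ge 2$) set of $d$ vertex indices, uses exchangeability to reduce to $\P[U_{d+1},\ldots,U_{d+k}\in\pos(U_1,\ldots,U_d)]=m(d,k)$ by conditioning on $U_1,\ldots,U_d$, and invokes Theorem~\ref{theo:duality_moments_angles} for the second expression; your extreme-ray argument for disjointness just spells out the uniqueness that the paper states parenthetically.
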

\begin{proof}
%Let $\xi_1,\ldots, \xi_{d+k}$ be i.i.d.\ standard Gaussian random points in $\R^d$. With the identification $U_i = \xi_i/\|\xi_i\|$ it duffices to compute the probability of the event $A:= \{\pos (\xi_1,\ldots, \xi_{d+k}) \cap \bS^{d-1} \text{ is a spherical simplex}\}$.
The event $\{\cP_{d+k,d} \text{ is a spherical simplex}\}$ occurs if and only if some (necessarily unique, for $d\geq 2$) set of $k$ vectors $U_{i_1}, \ldots, U_{i_k}$ is contained in the positive hull of the remaining $d$ vectors. By exchangeability, it follows that
\begin{align*}
\P[\cP_{d+k,d} \text{ is a spherical simplex}]
&=
\binom{d+k}{d} \cdot \P[U_{d+1},\ldots, U_{d+k} \in \pos (U_1,\ldots, U_d)]\\
&=
\binom{d+k}{d} \cdot \E [(\alpha (\pos(U_1,\ldots, U_d)))^k]
=
\binom{d+k}{d} m(d,k),
\end{align*}
and the proof is complete since $m(d,k) = m(k,d)$ by Theorem~\ref{theo:duality_moments_angles}.
\end{proof}

\begin{example}\label{exam:probab_spherical_simplex}
Taking $k=1,2,3$ and recalling Theorem~\ref{theo:random_angle_three_moments} yields the following results:
\begin{align}
&\P[\cP_{d+1,d} \text{ is a spherical simplex}] = \frac{d+1}{2^d}, \label{eq:probab_spher_simplex_k=1}
\\
&\P[\cP_{d+2,d} \text{ is a spherical simplex}] = \frac{d+2}{2^{d+1}}, \label{eq:probab_spher_simplex_k=2}
\\
&\P[\cP_{d+3,d} \text{ is a spherical simplex}] = \frac 1 {(4\pi)^d} \binom{d+3}{3} \int_0^{2\pi}  x^d f_S(x)\dd x,  \label{eq:probab_spher_simplex_k=3}
\end{align}
where $f_S(x)$ is as in Theorem~\ref{theo:random_angle_three_moments}. Equation~\eqref{eq:probab_spher_simplex_k=1} recovers a formula  obtained in~\cite[Lemma~4.4]{maehara_martini_sylvester_sphere}; see also~\cite[Theorem~3.6]{maehara_martini_geometric_probab}, \cite[Corollary~8.1]{maehara_martini_book}, \cite[Section~4.2]{kabluchko_panzo}. Note that~\eqref{eq:probab_spher_simplex_k=1} admits a simple proof:  $\cP_{d+1,d}$ is a spherical simplex if one of the vectors ($d+1$ choices) is inside the positive hull of the remaining vectors, and the probability that $U_{d+1} \in \pos (U_1,\ldots, U_d)$ is the expected angle of $\pos (U_1,\ldots, U_d)$, which is equal to $2^{-d}$ for symmetry reasons (see~\cite[Section~4]{maehara_martini_sylvester_sphere} for details).
On the other hand, Equations~\eqref{eq:probab_spher_simplex_k=2} and~\eqref{eq:probab_spher_simplex_k=3} seem to be new. For example, \eqref{eq:probab_spher_simplex_k=3} combined with Remark~\ref{rem:third_moment_angle} gives
\begin{align*}
&\P[\cP_{6,3} \text{ is a spherical simplex}] = \frac {15}{32} - \frac{15 \log 2}{4\pi^2}\approx 0.205385,
\\
&\P[\cP_{7,4} \text{ is a spherical simplex}] = \frac{35}{64}-\frac{105\log 2}{32\pi^2}-\frac{945\,\zeta(3)}{64\pi^4} \approx 0.134219,
\\
&\P[\cP_{8,5} \text{ is a spherical simplex}]  = \frac{35}{64} \;-\; \frac{105\,\log 2}{16\pi^2} \approx 0.085987.
\end{align*}
\end{example}

%
%
%The probability that $\pos (U_1,\ldots, U_{d+1})\cap \bS^{d-1}$ is a spherical simplex is %$(d+1)/2^d$.
%Taking $k=2$ gives:  The probability that $\pos (U_1,\ldots, U_{d+2})\cap \bS^{d-1}$ is a spherical simplex is $(d+2)/2^{d+1}$.

Next we consider the case $n=d+2$ in more detail. The number of vertices of the spherical polytope $\cP_{d+2,d}$ can take values $d$ (if $\cP_{d+2,d}$ is a spherical simplex),  $d+1$, $d+2$ and $0$ (equivalently, $\cP_{d+2,d} = \bS^{d-1}$).  In the next theorem we compute the probabilities of these possibilities.

\begin{theorem}[$d+2$ points on $\bS^{d-1}$]\label{theo:d+2_points_on_sphere_in_Rd}
For all $d\geq 2$  we have
\begin{align}
&\P[\cP_{d+2,d} = \bS^{d-1}] = \frac{d+2}{2^{d+1}}, \label{eq:probab_d+2_points_sphere_absorb}
\\
&\P[\cP_{d+2,d} \text{ is a spherical simplex}] = \frac{d+2}{2^{d+1}}, \label{eq:probab_d+2_points_sphere_simplex}
\\
&\P[\cP_{d+2,d} \text{ has } d+1 \text{ vertices}] = \frac{(d-2)(d+2)}{2^{d+1}},  \label{eq:probab_d+2_points_sphere_d+1_vertex}
\\
&\P[\cP_{d+2,d} \text{ has } d+2  \text{ vertices}] = 1 - \frac{d(d+2)}{2^{d+1}}.  \label{eq:probab_d+2_points_sphere_convex_position}
\end{align}
\end{theorem}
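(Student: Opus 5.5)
The first two identities are already available, so the plan is to obtain one more relation and let the probabilities sum to one. By Wendel's formula~\eqref{eq:wendel_formula} with $n=d+2$,
\[
p(d+2,d)=2^{-(d+1)}\Bigl[\tbinom{d+1}{d}+\tbinom{d+1}{d+1}\Bigr]=\frac{d+2}{2^{d+1}},
\]
which is~\eqref{eq:probab_d+2_points_sphere_absorb}. Identity~\eqref{eq:probab_d+2_points_sphere_simplex} is the case $k=2$ of Example~\ref{exam:probab_spherical_simplex}. Almost surely the vectors are in general linear position, so when $\cP_{d+2,d}\neq\bS^{d-1}$ the cone is pointed. Its vertices (extreme rays) are then exactly the $\pos(U_i)$ that are faces. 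Hence the vertex count lies in $\{d,d+1,d+2\}$ on this event, and it is $0$ otherwise. The four probabilities therefore sum to $1$, and it suffices to compute $\E f_1(\cW_{d+2,d})$.

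Here the paper's convention $f_\ell$ counts $\ell$-dimensional faces, so rays are $f_1$ and I take $\ell=1$. By Proposition~\ref{prop:expected_face_number_donoho_tanner_cone},
\[
\E f_1(\cW_{d+2,d})=(d+2)\,p(d+1,2)=(d+2)\Bigl(1-\frac{d+1}{2^{d}}\Bigr)
=(d+2)-\frac{(d+2)(d+1)}{2^{d}},
\]
using $p(d+1,2)=1-2^{-d}\bigl[\binom d0+\binom d1\bigr]$. The same value can be read off the Gale dual directly: $\pos(U_i)$ is a face if and only if $\pos(V_j:j\neq i)=\R^2$ for $d+1$ uniform points on the circle.

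Now write $a=\P[\text{$d+1$ vertices}]$, $b=\P[\text{$d+2$ vertices}]$, and $s=\frac{d+2}{2^{d+1}}$ for each of the first two probabilities. This gives two linear equations:
\[
s+s+a+b=1,\qquad d\,s+(d+1)a+(d+2)b=(d+2)-\frac{(d+2)(d+1)}{2^{d}}.
\]
Subtract $(d+2)$ times the first equation from the second. This gives $-(d+4)s-a=-\frac{(d+2)(d+1)}{2^d}$, so
\[
a=\frac{(d+2)\bigl(2(d+1)-(d+4)\bigr)}{2^{d+1}}=\frac{(d+2)(d-2)}{2^{d+1}}.
\]
It then follows that $b=1-2s-a=1-\frac{d(d+2)}{2^{d+1}}$, which gives~\eqref{eq:probab_d+2_points_sphere_d+1_vertex} and~\eqref{eq:probab_d+2_points_sphere_convex_position}.

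The main point needing care is the identification of vertices with $1$-faces $\pos(U_i)$. In particular, no two $U_i$ are parallel a.s., and the case $\cP=\bS^{d-1}$ must be assigned $0$ vertices and excluded from the face count. Proposition~\ref{prop:expected_face_number_donoho_tanner_cone} handles this automatically, since $A_I$ for $\#I=1\le d-1$ fails when the cone is $\R^d$. For $d=2$ one checks that the formulas remain consistent: $a=0$ and $b=0$, matching that four points on a circle never give $3$ or $4$ vertices.
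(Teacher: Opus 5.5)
Your proposal is correct and follows essentially the same route as the paper: Wendel's formula for the first identity, the case $k=2$ of the spherical-simplex example for the second, and then the expected number of $1$-dimensional faces, $(d+2)\,p(d+1,2)$, combined with the fact that the four probabilities sum to one. The paper writes the expectation as $\E[d+2-f_0(\cP_{d+2,d})]$ and reads off the third probability directly instead of solving your two-equation system, but this is the same computation.
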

\begin{proof}
By Wendel's formula~\eqref{eq:wendel_formula_repeat}, $\P[\cP_{d+2,d} = \bS^{d-1}] = p(d+2,d) = (d+2)/2^{d+1}$, which proves~\eqref{eq:probab_d+2_points_sphere_absorb}.  We already proved~\eqref{eq:probab_d+2_points_sphere_simplex} in Example~\ref{exam:probab_spherical_simplex}. It remains to prove~\eqref{eq:probab_d+2_points_sphere_d+1_vertex} and~\eqref{eq:probab_d+2_points_sphere_convex_position}.  By Proposition~\ref{prop:expected_face_number_donoho_tanner_cone} with $\ell=1$ (taking into account that $f_0(\cP_{d+2,d}) = f_1 (\cW_{d+2,d})$),
$$
\E [d+2 - f_0(\cP_{d+2,d})] = (d+2) (1 - p(d+1,2)) = (d+2)p(d+1,d-1) = \frac{(d+2)(2d+2)}{2^{d+1}},
$$
where we again used Wendel's formula~\eqref{eq:wendel_formula_repeat}.
On the other hand, by definition of expectation and~\eqref{eq:probab_d+2_points_sphere_simplex},
\begin{multline*}
\E [d+2 - f_0(\cP_{d+2,d})] = (d+2)\cdot \P[\cP_{d+2,d} = \bS^{d-1}] + 2\cdot \P[\cP_{d+2,d} \text{ is a simplex}] \\
+ 1\cdot \P[\cP_{d+2,d} \text{ has $d+1$ vertices}]
=\frac{(d+2) (d+2 + 2)}{2^{d+1}}+ \P[\cP_{d+2,d} \text{ has $d+1$ vertices}].
\end{multline*}
Comparing both results gives~\eqref{eq:probab_d+2_points_sphere_d+1_vertex}. To prove~\eqref{eq:probab_d+2_points_sphere_convex_position}, observe that the complement of $\{\cP_{d+2,d} \text{ has } d+2 \text{ vertices}\}$ is the disjoint union of the events appearing in~\eqref{eq:probab_d+2_points_sphere_absorb}, \eqref{eq:probab_d+2_points_sphere_simplex}, \eqref{eq:probab_d+2_points_sphere_d+1_vertex}.
\end{proof}

\begin{example}[$5$ points on $\bS^2$]
Taking $d=3$ in Theorem~\ref{theo:d+2_points_on_sphere_in_Rd} gives:
The probability that the spherical convex hull of five random points on $\bS^2$, chosen uniformly and independently,  is a spherical triangle, quadrilateral, or pentagon is $5/16, 5/16, 1/16$, respectively. (Also, with probability $5/16$, their positive hull is the whole of $\R^3$).  Elementary proofs  can be found in~\cite[Theorem 3.6]{maehara_martini_geometric_probab}, \cite[Exercises~8.1, 8.2 on p.~170]{maehara_martini_book}, \cite{SemiclassicalRavenclawPrefect2021}.
\end{example}

%Let us record one more consequence of Theorem~\ref{theo:gale_duality_stoch_orthant}.
%\begin{proposition}[Probability of $\ell$-neighborliness]
%%Let  $U_1,\ldots, U_{d+k}$  be i.i.d.\ and uniform on $\bS^{d-1}$. Then,
%For every $\ell \in \{0,\ldots, d-1\}$, the  following events have equal probabilities:
%\begin{itemize}
%\item [(a)] $f_\ell (\cW_{d+k,d})$ attains its maximal possible value $\binom{d+k}{\ell}$, that is the positive hull of every subset of $U_1,\ldots, U_{d+k}$ of size $\ell$ is an $\ell$-face of $\cW_{d+k,d}$.
%\item [(b)] The half-spheres $\mathrm{HS}_j := \{x\in \bS^{k-1}: \langle x, V_j \rangle \leq 0\}$, where $j=1,\ldots, d+k$, cover every point of $\bS^{k-1}$ less than $d+k-\ell$ times.
%\item [(c)] The half-spheres $\mathrm{HS}_j$ cover every point of $\bS^{k-1}$ at least $\ell+1$ times.
%\end{itemize}
%\end{proposition}
%\begin{proof}
%By Theorem~\ref{theo:gale_duality_stoch_orthant}, the event from (a) has the same probability as the following event: for every set $J\subseteq [d+k]$ with $\# J = d+k-\ell$, we have $\pos (V_j: j\in J) = \R^k$ or, equivalently, by taking the dual, $\cap_{j\in J} \mathrm{HS}_j = \varnothing$. This is event (b). By the de Morgan rule, the above is also equivalent to $\cup_{j\in J} (-\mathrm{HS}_j) = \bS^{k-1}$ for every $J$ with $\# J = d+k-\ell$, which has the same probability as $\cup_{j\in J} \mathrm{HS}_j = \bS^{k-1}$ for every $J$ with $\# J = d+k-\ell$. The latter event is (c).
%\end{proof}

Let us now consider the regime where $d\to\infty$ while $k\ge 2$ is fixed. Conjecture~\ref{conj:asymptotics_moments_angle}, together with Proposition~\ref{prop:probab_spherical_simplex}, would imply that the probability that $\cP_{d+k,d}$ is a spherical simplex is asymptotically equivalent to $(c_k/k!)\, d^{k-1} 2^{-d}$ for some constant $c_k>0$. Section~\ref{subsec:conjectures_high_dim_angles} suggests the following mechanism by which this rare event may occur: a subset of $d$ vectors $U_{i_1},\ldots,U_{i_d}$ spans a cone with solid angle of constant order, and the remaining $k$ vectors fall inside this cone. This leads to the following conjecture.

\begin{conjecture}
Fix $k\ge 2$. The conditional distribution of $\alpha(\cW_{d+k,d})$, given that $\cW_{d+k,d}\cap \bS^{d-1}$ is a spherical simplex, converges weakly (as $d\to\infty$) to a limit law not concentrated at $0$.
\end{conjecture}

Let us record one further consequence of Theorem~\ref{theo:gale_duality_stoch_orthant}.

\begin{proposition}[Probability of \(\ell\)-neighborliness]\label{prop:prob_l_neighborly}
Let $d,k\in \N$. Let $U_1,\ldots,U_{d+k}$ be independent and uniform on $\bS^{d-1}$ and let  $V_1,\ldots, V_{d+k}$ be independent and uniform on $\bS^{k-1}$.
For every \(\ell\in\{0,\ldots,d-1\}\), the following events have the same probability:
\begin{itemize}
\item[(a)] \(f_\ell(\cW_{d+k,d}) = \binom{d+k}{\ell}\), that is, for every subset \(I\subseteq[d+k]\) with \(\#I=\ell\), the cone \(\pos(U_i:i\in I)\) is an \(\ell\)-dimensional face of \(\cW_{d+k,d}= \pos (U_1,\ldots, U_{d+k})\).
\item[(b)] For every \(J\subseteq[d+k]\) with \(\#J=d+k-\ell\), one has
\(\bigcap_{j\in J}\mathrm{HS}_j=\varnothing\),
where \(\mathrm{HS}_j:=\{x\in\bS^{k-1}:\langle x,V_j\rangle\le 0\}\), \(j=1,\ldots,d+k\).
Equivalently, the family of half-spheres \((\mathrm{HS}_j)_{j=1}^{d+k}\) covers every point of \(\bS^{k-1}\) at most \(d+k-\ell-1\) times.
\item[(c)] The family \((\mathrm{HS}_j)_{j=1}^{d+k}\) covers every point of \(\bS^{k-1}\) at least \(\ell+1\) times.
\end{itemize}
\end{proposition}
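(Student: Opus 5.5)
We couple the configurations via Theorem~\ref{theo:gale_duality_stoch_orthant}: on one probability space we realize $U_1,\ldots,U_{d+k}$ and $V_1,\ldots,V_{d+k}$ with the stated marginals so that~\eqref{eq:gale_duality_special_case_orthant} holds for every $I$ with $\#I\le d-1$. Since the events in (a), (b), (c) are each measurable functions of the $U$'s alone or of the $V$'s alone, it suffices to prove that, on this coupled space, (a) and (b) coincide as events (up to null sets), and that (b) and (c) have equal probability.

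\emph{Step 1: (a) $\Leftrightarrow$ (b).} Since $\ell\le d-1$, the duality applies to every $I$ with $\#I=\ell$, and $I\mapsto J:=I^c$ is a bijection onto the subsets of size $d+k-\ell$. Also, a.s.\ the $U_i$ are in general linear position, so $\pos(U_i:i\in I)$ being a face automatically makes it $\ell$-dimensional. Hence (a) is equivalent to $\pos(V_j:j\in J)=\R^k$ for all such $J$. Next, we use the standard polarity fact: for a finite family, $\pos(V_j:j\in J)=\R^k$ iff its polar cone $\{x:\langle x,V_j\rangle\le0\ \forall j\in J\}$ is $\{0\}$, i.e.\ iff $\bigcap_{j\in J}\mathrm{HS}_j=\varnothing$. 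This gives (b). The reformulation in (b) is immediate: a point lying in at least $d+k-\ell$ half-spheres lies in the intersection over some $J$ of that size, and conversely.

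\emph{Step 2: (b) and (c) have the same probability.} We pass through an intermediate event. By de Morgan, $\bigcap_{j\in J}\mathrm{HS}_j=\varnothing$ iff $\bigcup_{j\in J}(\bS^{k-1}\setminus \mathrm{HS}_j)=\bS^{k-1}$. Now $\bS^{k-1}\setminus\mathrm{HS}_j$ is the open half-sphere $\{\langle x,V_j\rangle>0\}=-\Int\mathrm{HS}_j$. So (b) says that, for every $J$ of size $d+k-\ell$, the open half-spheres $\{\langle x,-V_j\rangle<0\}$, $j\in J$, cover the sphere. Equivalently, every point avoids at most $d+k-\ell-1$ of these open half-spheres, hence lies in at least $\ell+1$ of them. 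Replacing $V_j$ by $-V_j$ preserves the joint law, since the uniform distribution is symmetric. Therefore (b) has the same probability as the event that the open half-spheres $\{x:\langle x,V_j\rangle<0\}$ cover every point at least $\ell+1$ times.

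\emph{Step 3: open versus closed half-spheres.} It remains to replace the open half-spheres in Step~2 by the closed $\mathrm{HS}_j$ of (c), and this is the only delicate point. Covering by the open ones implies covering by the closed ones. For the converse, we argue by compactness and general position. Suppose some point $x$ lies in at most $\ell$ of the open half-spheres but in at least $\ell+1$ closed ones. Then $x$ lies on some great spheres $V_j^\perp$. Almost surely the $V_j$ are in general linear position, so the set $A$ of indices with $\langle x,V_j\rangle=0$ has size at most $k-1$ and $\{V_j:j\in A\}$ is linearly independent. We can then pick a direction $w$ with $\langle w,V_j\rangle>0$ for all $j\in A$ and move $x$ slightly along $w$ (renormalizing to stay on the sphere). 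This perturbation leaves every strict inequality unchanged and removes $x$ from every closed $\mathrm{HS}_j$ with $j\in A$. The new point lies in at most $\ell$ closed half-spheres, a contradiction. Hence the two events in Step~3 agree almost surely, and all three probabilities coincide.
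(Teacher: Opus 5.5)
Your proof is correct and follows the same route as the paper: the Gale coupling of Theorem~\ref{theo:gale_duality_stoch_orthant} plus polarity gives (a)$\Leftrightarrow$(b), and complementation together with the symmetry $(V_j)_j\eqdistr(-V_j)_j$ lets you compare (b) with (c). Your Step~3 adds something the paper's proof silently skips: after complementing and flipping signs one gets a covering statement for the open half-spheres $\{x:\langle x,V_j\rangle<0\}$ rather than for the closed $\mathrm{HS}_j$ of (c), and your general-position perturbation argument correctly shows that these two covering events agree almost surely.
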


\begin{proof}
By Theorem~\ref{theo:gale_duality_stoch_orthant}\,(c), the event in~(a) has the same probability as the event that for every
\(J\subseteq[d+k]\) with \(\#J=d+k-\ell\) one has \(\pos(V_j:j\in J)=\R^k\).
By polarity, \(\pos(V_j:j\in J)\ne\R^k\) holds if and only if there exists \(x\in\bS^{k-1}\) with
\(\langle x,V_j\rangle\le 0\) for all \(j\in J\), that is, if and only if \(\bigcap_{j\in J}\mathrm{HS}_j\neq\varnothing\).
Hence \(\pos(V_j:j\in J)=\R^k\) is equivalent to \(\bigcap_{j\in J}\mathrm{HS}_j=\varnothing\), which is precisely~(b).

To see that the events in (b) and (c) have equal probabilities, observe that
\begin{align*}
\text{Event in (b)}
&\;\Longleftrightarrow\;
\forall x\in\bS^{k-1}: \#\{j\in[d+k]: x\in \mathrm{HS}_j\}\le d+k-\ell-1
\\
&\;\Longleftrightarrow\;
\forall x\in\bS^{k-1}:  \#\{j\in[d+k]: x\in \bS^{k-1}\bsl \mathrm{HS}_j\}\ge \ell+1.
\end{align*}
Since $\bS^{k-1}\bsl \mathrm{HS}_j=\{z\in \bS^{k-1}:\langle z,-V_j\rangle < 0\}$ and since \((V_1,\ldots,V_{d+k})\) and \((-V_1,\ldots,-V_{d+k})\) have the same distribution, the probability of the latter event equals the probability of (c).
\end{proof}

\section{Limit theorems for face counts} \label{sec:limit_theorems_random_cones_high_dim}

\subsection{Statement of the limit theorem}
Recall that we consider the random polyhedral cone $\cW_{n,d}=\pos(U_1,\ldots,U_n)$, where
$U_1,\ldots,U_n$ are independent and uniformly distributed on the unit sphere in $\R^d$. Not much seems to be known about limit distributions of $f_{\ell}(\cW_{n,d})$ as $n,d,\ell \to\infty$.
We now describe the asymptotic fluctuations of the $(d-q)$-dimensional face count of
$\cW_{d+k,d}$ in the regime $d\to\infty$ with $k\in \N$ and $q\in \N$ fixed. Recall from Proposition~\ref{prop:expected_face_number_donoho_tanner_cone} the explicit formula
$$
\E f_{d-q} (\cW_{d+k,d}) = \binom{d+k}{d-q} p(k+q, k) = \frac{1}{2^{k+q-1}}\binom{d+k}{d-q}\sum_{r=k}^{k+q-1}\binom{k+q-1}{r},
\qquad d\geq q.
$$

\begin{theorem}[Distributional limit for face counts]\label{theo:distributional_limit_face_count_random_cone}
Let $k\in \N$ and $q \in \N$ be fixed. Then, as $d\to\infty$, the number of $(d-q)$-dimensional faces of $\cW_{d+k,d}$ satisfies
\begin{equation}
d\cdot \left(\frac{f_{d-q}(\cW_{d+k,d})}{\binom {d+k}{d-q}} - p(k+q, k)\right) \;{\overset{w}{\underset{d\to\infty}\longrightarrow}}\;
\frac {\binom{k+q}{2}\binom {k+q-2}{k-1}} {2^{k+q-1}}  \, (1 -Q_k),
\end{equation}
where $Q_k > 0$ is a random variable admitting the distributional representation $Q_1 \eqdistr  \operatorname{Gamma}(\frac 12, \frac 12)$ (for $k=1$) and
\begin{equation}\label{eq:Q_k_distributional}
Q_k \eqdistr \sum_{n=1,3,5,\ldots}  \left(\frac{\Gamma\!\left(\frac{k}{2}\right)\Gamma\!\left(\frac{n}{2}\right)}
{\pi\,\Gamma\!\left(\frac{n+k}{2}\right)}\right)^2 \operatorname{Gamma} \left(\frac{d_{n,k}}{2}, \frac 12\right),
\qquad \text{ for }k\geq 2.
%\qquad
%d_{n,k} := \binom{n+k-1}{n}-\binom{n+k-3}{n-2},
\end{equation}
Here $d_{n,k} := \binom{n+k-1}{n}-\binom{n+k-3}{n-2}$ with $k\geq 2$, $n\in \N$ is the dimension of the space of degree $n$ spherical harmonics on $\bS^{k-1}$ (see Section~\ref{subsec:proof_theo_f_vect_U_stat_part_2}), and  $\operatorname{Gamma}(\alpha, \lambda)$ denotes a Gamma-variable with shape parameter $\alpha>0$, rate parameter $\lambda>0$, and all such variables are independent. The expectation and variance of $Q_k$ are given by
\begin{align}
\E Q_k = 1,
\qquad
\Var Q_k = \frac{4}{\pi^2}\,\psi_1 \left(\frac{k}{2}\right),
\end{align}
where $\psi_1$ is the trigamma function, defined for $x>0$ by
$
\psi_1(x):=\frac{\dint^2}{\dint x^2}\log\Gamma(x)=\sum_{j=0}^\infty \frac{1}{(j+x)^2}.
$
%The  characteristic function of $Q_k$ is given by
%$$
%\varphi_{Q_k}(t)
%:=
%\E \left[\eee^{\ii t Q_k}\right]
%=
%\prod_{r=1,3,5,\ldots}
%\left(1-2\ii t  \left(\frac{\Gamma\!\left(\frac{k}{2}\right)\Gamma\!\left(\frac{r}{2}\right)}
%{\pi\,\Gamma\!\left(\frac{r+k}{2}\right)}\right)^2\right)^{-\frac 12d_{r,k}},
%\qquad t\in \R.
%$$
Finally, as $d\to\infty$ we have
\begin{equation}\label{eq:var_face_count_asymptotics}
\Var \left(\frac{f_{d-q}(\cW_{d+k,d})}{\binom {d+k}{d-q}}\right)
\sim
\frac {\binom{k+q}{2}^2\binom {k+q-2}{k-1}^2} {2^{2k+2q-2}} \frac{\Var Q_k}{d^2} =
\frac{\binom{k+q}{2}^2 \binom {k+q-2}{k-1}^2 \psi_1 \left(\frac k2\right)}{2^{2k+2q-4} \pi^2 \cdot d^2}.
\end{equation}
% defined for $x>0$ by $\psi_1(x):=\frac{\dint^2}{\dint x^2}\log\Gamma(x)=\sum_{j=0}^\infty \frac{1}{(j+x)^2}$.
%Finally, for all $d\in \N$,  $q\in \{1,\ldots, q-1\}$, $k\in\N$ the following  exponential concentration inequality  holds:
%$$
%\P\left[\left|\frac{f_\ell(\cW_{d+k,d})}{\binom {d+k}{d-q}} - p(k+q, k)\right| \geq t\right] \leq 2 \eee^{-2t^2 \lfloor (d+k)/(k+q) \rfloor},
%\qquad \text{ for all } t>0.
%$$
\end{theorem}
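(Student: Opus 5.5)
Via the coupling of Theorem~\ref{theo:gale_duality_stoch_orthant} with $n=d+k$, for every $I\subseteq[d+k]$ with $\#I=d-q\le d-1$ the face event $A_I$ coincides with $\{\pos(V_j:j\in I^c)=\R^k\}$, where $\#I^c=k+q=:m$. Hence
\[
\frac{f_{d-q}(\cW_{d+k,d})}{\binom{d+k}{d-q}}=\binom{d+k}{m}^{-1}\sum_{J\subseteq[d+k],\ \#J=m} h(V_j:j\in J),
\qquad h(v_1,\ldots,v_m):=\ind_{\{\pos(v_1,\ldots,v_m)=\R^k\}},
\]
which is a $U$-statistic of fixed order $m$ with a symmetric bounded kernel, computed from $N:=d+k$ i.i.d.\ uniform points on $\bS^{k-1}$. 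Its mean is $p(m,k)$ by Wendel's formula. I will use the Hoeffding decomposition $h-p(m,k)=\sum_{c=1}^m h_c$. The key step is to show that the first projection $h_1(v)=\E[h\mid V_1=v]-p(m,k)$ vanishes identically. This follows from rotation invariance: $\E[h\mid V_1=v]$ does not depend on $v\in\bS^{k-1}$. The $U$-statistic is therefore degenerate, and the leading term comes from the second projection
\[
h_2(v,w)=\E[h\mid V_1=v,V_2=w]-p(m,k)=:g(\langle v,w\rangle),
\]
a zonal kernel. Standard theory for degenerate $U$-statistics (Gregory, Serfling Ch.~5) then gives
\[
N\Bigl(U_N-p(m,k)\Bigr)\overset{w}{\longrightarrow}\binom m2\sum_i\lambda_i(\xi_i^2-1),
\]
where $\xi_i$ are i.i.d.\ standard normal and $\lambda_i$ are the eigenvalues of the convolution operator $f\mapsto\int g(\langle\cdot,w\rangle)f(w)\,\nu_k(\dd w)$. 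Since $N\sim d$, this gives the scaling $d$.

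Next I compute $g$ and its spectrum. Conditionally on $V_1=v$ and $V_2=w$, the event $\pos(v,w,V_3,\ldots,V_m)=\R^k$ is governed by Wendel-type counting: for generic points, $\P[\pos(x_1,\ldots,x_m)=\R^k]$ is $1$ minus the fraction of sign patterns that yield a pointed cone. I will compute this by conditioning on the great-circle geometry. I expect $g(t)$ to be affine in the angle $\theta=\arccos t$, of the form
\[
g(t)=C_{m,k}\Bigl(\tfrac{\theta}{\pi}-\tfrac12\Bigr)\cdot(\text{sign}),
\qquad
C_{m,k}\ \text{related to}\ \frac{\binom{m-2}{k-1}}{2^{m-1}}.
\]
This is because whether the remaining $m-2$ points positively span together with $v,w$ depends, after projecting along $\lin(v,w)$, only on whether $v,w$ lie on a common side; the probability that a random hyperplane through the origin separates $v$ and $w$ is $\theta/\pi$. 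The expected constant $\binom{m}{2}\binom{m-2}{k-1}/2^{m-1}$ matches this guess. The eigenvalues of $\theta\mapsto\theta/\pi-1/2$ are obtained from the Funk--Hecke formula. The function $\arccos t$ has Gegenbauer coefficients that vanish in even degrees $n\ge2$ and are equal to $-\bigl(\Gamma(k/2)\Gamma(n/2)/(\pi\Gamma((n+k)/2))\bigr)^2$ up to normalisation in odd degrees $n$, each with multiplicity $d_{n,k}$. For $k=1$ only $n=1$ survives, with multiplicity $1$. Collecting the $\xi_i^2$ into Gamma variables gives $c(1-Q_k)$ (the sign is negative because all the eigenvalues are negative), and $\E Q_k=1$ follows from $\sum_i\lambda_i=g(1)$-type trace identities.

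Finally, the variance asymptotic~\eqref{eq:var_face_count_asymptotics} follows because the variance of the degenerate $U$-statistic is $\binom m2^2\cdot 2\sum_i\lambda_i^2/N^2+O(N^{-3})$. The identity $\Var Q_k=\frac4{\pi^2}\psi_1(k/2)$ follows by computing $2\sum_n d_{n,k}\lambda_n^2=\|g\|_{L^2}^2$-type sums, equivalently the integral $\int(\arccos t)^2(1-t^2)^{(k-3)/2}\,\dd t$, expressed via trigamma. The main obstacle is the exact computation of $h_2$, including showing that it is precisely an affine function of the angle with the stated constant. I will first try a direct Wendel-type argument: project onto $\lin(v,w)^\perp$ and use the characterisation of positive spanning through the intersection of half-spheres, as in Proposition~\ref{prop:prob_l_neighborly}.
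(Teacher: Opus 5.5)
You have the same architecture as the paper: a Gale-dual $U$-statistic of order $m=k+q$ on $\bS^{k-1}$, $\widetilde g_1\equiv 0$ by rotation invariance, the limit theorem for $U$-statistics degenerate of order $2$, and Funk--Hecke for the spectrum. However, the step that produces every constant in the theorem is missing. You only \emph{expect} $\widetilde g_2=C_{m,k}(\theta/\pi-\tfrac12)$, with an undetermined sign and a constant ``related to'' $\binom{m-2}{k-1}/2^{m-1}$, and you acknowledge this is the main obstacle. The remark about projecting along $\lin(v,w)$ is a heuristic, and the constant is read off from the statement. Without the exact identity $\widetilde g_2(v,w)=-2^{1-m}\binom{m-2}{k-1}\frac{2}{\pi}\arcsin\langle v,w\rangle$ (for $v\neq\pm w$) you have neither $\zeta_2>0$, nor the sign of the eigenvalues, nor the prefactor.

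The paper obtains this identity from the conic kinematic formula, applied to $\pos(v,w)^\circ$ and the randomly rotated $\pos(V_3,\dots,V_m)^\circ$, together with the expected conic intrinsic volumes of $\cW_{m-2,k}$. Affinity in $\theta$ then comes from $\upsilon_j(\pos(v,w))$ being affine in $\theta$. Your Wendel idea can also be made rigorous. Flipping the signs of $V_3,\dots,V_m$ gives $1-g_2(v,w)=2^{2-m}\E R$, where $R$ is the number of chambers of $\{V_j^\perp\}_{j\ge 3}$ meeting $\Int W$ and $W:=\pos(v,w)^\circ$. By deletion--restriction, $R=\sum_{\#J\le k-1}\ind\{\bigcap_{j\in J}V_j^\perp\cap\Int W\neq\varnothing\}$. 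Every $J$ with $\#J\le k-2$ contributes almost surely. For $\#J=k-1$, the uniform random line meets $\Int W$ with probability $1-\theta/\pi$, which is exactly your separating-hyperplane picture. This yields $g_2=2^{2-m}\sum_{\ell\ge k}\binom{m-2}{\ell}+2^{2-m}\binom{m-2}{k-1}\theta/\pi$, i.e.\ $C_{m,k}=\binom{m-2}{k-1}/2^{m-2}$ with a plus sign.

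The spectral step is likewise only asserted: you state the odd-degree Funk--Hecke coefficients of $\arccos$ ``up to normalisation'' without computing them. In the paper this is a genuine lemma. The arcsine operator factors as $A=S^2$, where $S$ has kernel $\operatorname{sign}\langle v,u\rangle$, because $\E[\operatorname{sign}\langle v,U\rangle\operatorname{sign}\langle U,w\rangle]=\frac2\pi\arcsin\langle v,w\rangle$. Funk--Hecke with Rodrigues' formula is then applied to $\operatorname{sign}$. This gives $\tau_n=(-1)^{(n-1)/2}\Gamma(\frac k2)\Gamma(\frac n2)/(\pi\Gamma(\frac{n+k}{2}))$ for odd $n$ and $\tau_n=0$ for even $n$.

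The same factorization gives $\operatorname{tr}A=\|S\|_{\mathrm{HS}}^2=1$. Your ``$g(1)$-type'' trace identity would need Mercer's theorem, hence positive semidefiniteness, which again comes from $A=S^2$. It also gives $\operatorname{tr}A^2=\|A\|_{\mathrm{HS}}^2=\frac{4}{\pi^2}\Var\Theta$. The value $\Var\Theta=\frac12\psi_1(\frac k2)$, which you also only assert, is obtained in the paper by differentiating twice at $a=0$ the closed form of $\int_{-\pi/2}^{\pi/2}\cos^{k-2}x\cos(ax)\,\dd x$.

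Until these computations are supplied, your argument only shows that $d\,(\mathbb U_n-p(m,k))$ converges to $\binom m2\sum_i\lambda_i(\xi_i^2-1)$ for the unidentified eigenvalues of the zonal kernel $\widetilde g_2$.
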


\begin{example}[$k=2$]
For $k=2$, we have $d_{n,2} = 2$ for $n\geq 1$. The distributional representation~\eqref{eq:Q_k_distributional} simplifies to
 \[
Q_2 \; \eqdistr \; \frac{8}{\pi^2}\sum_{n=1,3,5,\dots}\frac{E_n}{n^2},
\]
where $E_1,E_3,E_5,\dots$ are i.i.d.\ exponential with unit mean. It follows that the Laplace transform of $Q_2$ is given by
\[
\mathbb{E}\!\left[\eee^{-sQ_2}\right]
=\prod_{n=1,3,5,\ldots}\mathbb{E}\!\left[\exp\!\left(-s\frac{8}{\pi^2}\frac{E_n}{n^2}\right)\right]
=\prod_{n=1,3,5,\ldots}\frac{1}{1+\frac{8s}{\pi^2 n^2}}
=\frac{1}{\cosh\!\big(\sqrt{2s}\big)},
\qquad s\ge 0.
\]
This distribution is well known and its properties have been reviewed in~\cite{BianePitmanYor2001ZetaTheta}. In particular, the following interpretation is known.
Let $(B_t)_{t\ge 0}$ be standard Brownian motion with $B_0=0$, and define the first exit time from $(-1,1)$ by $\tau_1 \;:=\; \inf\{t\ge 0:\ |B_t|=1\}$. Then $\tau_1$ has the same law as $Q_2$.
%Then
%\[
%\mathbb{E}\!\left[e^{-s\tau_1}\right] \;=\; \frac{1}{\cosh\!\big(\sqrt{2s}\big)},
%\qquad s\ge 0.
%\]
\end{example}

Let us record a convex-dual analogue of Theorem~\ref{theo:distributional_limit_face_count_random_cone}.
Let $U_1,\ldots,U_n$ be independent random vectors, each uniformly distributed on $\bS^{d-1}$, where $n>d$.
The hyperplanes $U_1^{\perp},\ldots,U_n^{\perp}$ induce a conical tessellation of $\R^d$ into polyhedral cones.
The number of cones is almost surely constant and equals
$
C(n,d)=2\sum_{\ell=0}^{d-1}\binom{n-1}{\ell}
$,
by the Steiner--Schl\"afli formula; see \cite[Lemma~8.2.1]{schneider_weil_book}.
Choose one of these cones uniformly at random and denote it by $\cS_{n,d}$.
The random cone $\cS_{n,d}$ is called the \emph{random Schl\"afli cone}; see \cite[Definition~3.2]{hug_schneider_conical_tessellations}.

\begin{corollary}[Distributional limit for face counts of Schl\"afli cones]
Fix $k\in\N$ and $q\in\N$. Then, as $d\to\infty$, the number of $q$-dimensional faces of $\cS_{d+k,d}$ satisfies
\begin{equation}
d\cdot\left(\frac{f_q(\cS_{d+k,d})}{\binom{d+k}{\,d-q\,}}-p(k+q,k)\right)
\;{\overset{w}{\underset{d\to\infty}\longrightarrow}}\;
\frac{\binom{k+q}{2}\binom{k+q-2}{k-1}}{2^{k+q-1}}\,(1-Q_k),
\end{equation}
where $Q_k>0$ is the same random variable as in Theorem~\ref{theo:distributional_limit_face_count_random_cone}.
\end{corollary}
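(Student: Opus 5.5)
The corollary should reduce to Theorem~\ref{theo:distributional_limit_face_count_random_cone} through polarity plus a size-biasing (conditioning) argument; the main point is to control the effect of the size-biasing on the limit law.

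\emph{Step 1: identify the law of $\cS_{n,d}$.} By \cite{hug_schneider_conical_tessellations}, the random Schl\"afli cone $\cS_{n,d}$ has the same law as the polar of the Cover--Efron cone. The Cover--Efron cone is $\cW_{n,d}$ conditioned on $\{\cW_{n,d}\neq\R^d\}$. Under polarity a $q$-dimensional face of $\cS_{n,d}=\cW^\circ$ corresponds to a $(d-q)$-dimensional face of $\cW$, so
\[
f_q(\cS_{n,d}) \eqdistr \bigl(f_{d-q}(\cW_{n,d}) \,\big|\, \cW_{n,d}\neq \R^d\bigr).
\]
To justify this directly, observe that the cell containing a fixed direction is $\cW^\circ$ with a sign flip. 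Choosing a cell uniformly then amounts to size-biasing by the inverse solid angle. Sign invariance of the $U_i$ turns this into the conditioning above, and on $\{\cW\neq\R^d\}$ the polar cone is pointed-dual and full-dimensional a.s.

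\emph{Step 2: remove the conditioning.} By Wendel's formula, $\P[\cW_{d+k,d}=\R^d]=p(d+k,d)=2^{-(d+k-1)}\sum_{\ell=d}^{d+k-1}\binom{d+k-1}{\ell}$. This is $O(d^{k-1}2^{-d})$, which tends to $0$. The centred and normalized statistic $X_d$ of the theorem therefore has the same weak limit under $\P[\,\cdot\mid \cW\neq\R^d]$ as under $\P$. Indeed,
\[
|\P[X_d\in B\mid A_d]-\P[X_d\in B]|\le 2\P[A_d^c]/\P[A_d]\to0
\]
uniformly in Borel sets $B$, where $A_d=\{\cW_{d+k,d}\neq\R^d\}$. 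The convergence of Theorem~\ref{theo:distributional_limit_face_count_random_cone} then transfers, with the same limit $\frac{\binom{k+q}{2}\binom{k+q-2}{k-1}}{2^{k+q-1}}(1-Q_k)$.

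\emph{Main obstacle.} The delicate point is Step 1: establishing rigorously that the uniformly chosen cell has the law of the polar of the conditioned cone, including the face correspondence. I would cite \cite{hug_schneider_conical_tessellations} for the distributional identity. Failing that, I would prove it by computing $\E h(\cS_{n,d}) = \E\bigl[\sum_{\text{cells } C} h(C)\bigr]/C(n,d)$ and expressing the sum over cells as a sum over sign vectors $\eps\in\{\pm1\}^n$ of $h(\pos(\eps_iU_i)^\circ)\ind\{\pos(\eps_iU_i)\neq\R^d\}$. Sign invariance gives $2^n$ equal terms, and $2^n\P[\cW\neq\R^d]=C(n,d)$ by Wendel's formula, which is consistent.
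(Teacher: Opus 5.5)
Your proposal is correct and follows the paper's proof. Polarity turns $f_q(\cW^\circ_{d+k,d})$ into $f_{d-q}(\cW_{d+k,d})$, and \cite[Theorem~3.1]{hug_schneider_conical_tessellations} identifies $\cS_{d+k,d}$ with $\cW^\circ_{d+k,d}$ conditioned on $\{\cW_{d+k,d}\neq\R^d\}$. Wendel's formula then shows this conditioning is asymptotically negligible. Your total-variation bound $2\P[A_d^c]/\P[A_d]$ and your sign-vector derivation of the Hug--Schneider identity (which correctly recovers $C(n,d)=2^n\P[\cW_{n,d}\neq\R^d]$) only make explicit two steps that the paper states briefly or cites.
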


\begin{proof}
Let $\cW_{d+k,d}^\circ$ denote the convex dual of $\cW_{d+k,d}$. Then
$f_q(\cW_{d+k,d}^\circ)=f_{d-q}(\cW_{d+k,d})$, and Theorem~\ref{theo:distributional_limit_face_count_random_cone}
can be rewritten as
\begin{equation}\label{eq:f_vect_dual_wendel_cone_large_d_distributional}
d\cdot\left(\frac{f_q(\cW_{d+k,d}^\circ)}{\binom{d+k}{\,d-q\,}}-p(k+q,k)\right)
\;{\overset{w}{\underset{d\to\infty}\longrightarrow}}\;
\frac{\binom{k+q}{2}\binom{k+q-2}{k-1}}{2^{k+q-1}}\,(1-Q_k).
\end{equation}
By~\cite[Theorem~3.1]{hug_schneider_conical_tessellations}, $\cS_{d+k,d}$ has the same distribution as
$\cW_{d+k,d}^\circ$ conditioned on the event $\{\cW_{d+k,d}\neq\R^d\}$.
Moreover, Wendel's formula~\eqref{eq:wendel_formula} yields
\[
\P[\cW_{d+k,d}\neq\R^d]
=1-p(d+k,d)
=1-\frac{1}{2^{d+k-1}}\sum_{\ell=d}^{d+k-1}\binom{d+k-1}{\ell}
\to 1
\qquad \text{as } d\to\infty.
\]
Hence conditioning on $\{\cW_{d+k,d}\neq\R^d\}$ becomes asymptotically negligible, and we may replace
$f_q(\cW_{d+k,d}^\circ)$ by $f_q(\cS_{d+k,d})$ in \eqref{eq:f_vect_dual_wendel_cone_large_d_distributional}
without affecting convergence in distribution.
\end{proof}

%\begin{corollary}
%Let $k\in \N$, $m \in \N$ be fixed. Then, as $d\to\infty$,
%$$
%\frac{f_{d-m}(\cW_{d+k,d})}{\binom {d+k}{d-m}} \;{\overset{P}{\underset{d\to\infty}\longrightarrow}}\;  p(k+m, k).
%$$
%\end{corollary}

\subsection{Background from the theory of \texorpdfstring{$U$}{U}-statistics}\label{subsec:U_stat_information}
The proof of Theorem~\ref{theo:distributional_limit_face_count_random_cone} relies on the theory of \(U\)-statistics. In this section, we introduce the necessary notation and recall several classical results on \(U\)-statistics; see the books of~\citet{Lee1990UStatistics}, \citet{KorolyukBorovskikh1994TheoryUStatistics}, and \citet[Chapter~5]{Serfling1980ApproximationTheorems} for further background.

Let $X_1,X_2,\ldots$ be i.i.d.\ random elements in a measurable space $(\mathcal X,\mathcal A)$
with common law $\mu$. Let $h:\mathcal X^m\to\R$ be a kernel of order $m\in \N$.  Our standing assumption is that $h$ is  measurable, symmetric (invariant under permutations of its arguments), and
$$
\E\bigl[h(X_1,\ldots,X_m)^2\bigr]<\infty.
$$
For $n\ge m$, the \emph{$U$-statistic} associated with $h$ is
\[
\mathbb U_n
:=
\binom{n}{m}^{-1}
\sum_{1\le i_1<\cdots<i_m\le n}
h\!\left(X_{i_1},\ldots,X_{i_m}\right).
\]
Put
\[
g_0 := \E h(X_1,\ldots,X_m) \qquad (\text{so } \E \mathbb U_n = g_0 \text{ for all } n\geq m).
\]
For $c\in \{1,\ldots,m\}$ define the conditional expectation kernels $g_c: \mathcal X^c \to\R$ by
\[
g_c(x_1,\ldots,x_c)
:=
\E\!\left[h(x_1,\ldots,x_c,X_{c+1},\ldots,X_m)\right], \qquad (x_1,\ldots, x_c) \in \mathcal X^c,
\]
where $X_{c+1},\ldots,X_m$ are i.i.d.\ with law $\mu$. Note that $\E g_c (X_1,\ldots, X_c) = g_0$ and define
$$
\widetilde g_c(x_1,\ldots,x_c) = g_c(x_1,\ldots, x_c) - g_0.
$$
Since $h$ is symmetric, each $g_c$ (and $\widetilde g_c$) is symmetric in its arguments.
For $c\in \{1,\ldots,m\}$, define
\[
\zeta_c
:=
\Var g_c(X_1,\ldots,X_c)
=
\E \bigl[\widetilde g_c(X_1,\ldots,X_c)^2\bigr].
\]
For $n\ge m$, the variance of $\mathbb U_n$ is given by the exact formula (see~\cite[Chapter~1, \S\,1.3, Theorem~3]{Lee1990UStatistics} or~\cite[Section~5.2.1, Lemma~A]{Serfling1980ApproximationTheorems})
\begin{equation}\label{eq:U_stat_variance_exact}
\Var \mathbb U_n
=
\sum_{c=1}^{m}
\frac{\binom{m}{c}\binom{n-m}{\,m-c\,}}{\binom{n}{m}}
\;\zeta_c.
\end{equation}
The kernel $h$ (and hence $\mathbb U_n$) is said to be \emph{degenerate of order $c\in\{1,\ldots,m\}$}
if
\[
\zeta_1=\cdots=\zeta_{c-1}=0
\qquad\text{and}\qquad
\zeta_c>0.
\]
Equivalently, $g_r(x_1,\ldots,x_r)=g_0$ for all $r\in \{1,\ldots, c-1\}$ ($\mu^r$-a.e.), but $g_c(X_1,\ldots,X_c)$ is not a.s.\ constant.
If $h$ is degenerate of order $c$, Equation~\eqref{eq:U_stat_variance_exact} implies (see, e.g., \cite[Section~5.3.4]{Serfling1980ApproximationTheorems})
\begin{equation}\label{eq:U_stat_variance_asymptotic}
\Var  \mathbb U_n
=
\frac{\binom{m}{c}\binom{n-m}{m-c}}{\binom{n}{m}}\;\zeta_c
\;+\; o(n^{-c})
\sim
\frac{\binom{m}{c}^2\,c!}{n^c}\;\zeta_c,
\qquad n\to\infty.
\end{equation}

The next classical theorem characterizes the weak limit of the $U$-statisic $\mathbb U_n$ under degeneracy of order $2$; see~\cite[Section~3.2.2]{Lee1990UStatistics}, \cite[Chapter~4]{KorolyukBorovskikh1994TheoryUStatistics}, \cite[p.~168]{RubinVitale1980} or~\cite[Section~5.5.2, p.~194]{Serfling1980ApproximationTheorems}.

\begin{theorem}[Non-Gaussian weak limit under degeneracy of order $2$]\label{theo:weak_limit_U_stat_c_2}
Assume $h:\mathcal X^m\to\R$ is degenerate of order $2$ (equivalently, $\zeta_1 = 0$ and $\zeta_2 >0$). Define the (self-adjoint, Hilbert--Schmidt, hence compact) operator $T:L^2(\mathcal X, \mu)\to L^2(\mathcal X, \mu)$ by
\[
(Tf)(x):=\int_{\mathcal X} \widetilde g_2(x,y)\, f(y)\,\mu(\dint y).
\]
Let $\lambda_1,\lambda_2,\ldots$ be the (real) non-zero eigenvalues of $T$, listed with multiplicities,
and let $\xi_1,\xi_2,\ldots$ be i.i.d.\ standard normal random variables. Then
\[
n\,(\mathbb U_n-\E \mathbb U_n)
\todistr
\binom{m}{2}\,\sum_{j=1}^\infty \lambda_j\,(\xi_j^2-1).
\]
%The characteristic function of $S$ is given by
%\[
%\varphi_S(t):=\E\big[\eee^{\ii tS}\big]
%=
%\prod_{j=1}^\infty
%\bigl(1-2\ii t\lambda_j\bigr)^{-1/2}\, \eee^{-\ii t\lambda_j},
%\qquad t\in \R,
%\]
%where $(1-2\ii t\lambda_j)^{-1/2}$ is interpreted via the principal branch of the square root.
\end{theorem}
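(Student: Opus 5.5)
The plan is to reduce to a quadratic form in i.i.d.\ sums using the Hoeffding decomposition, and then diagonalize the order-$2$ kernel through the spectral theorem for the Hilbert--Schmidt operator $T$.

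\textbf{Step 1: Hoeffding decomposition.} Write
\[
\mathbb U_n-g_0=\sum_{c=1}^m\binom{m}{c}\,\mathbb U_n^{(c)},
\]
where $\mathbb U_n^{(c)}$ is the $U$-statistic with the canonical (completely degenerate) kernel $h^{(c)}$. These kernels are obtained from $\widetilde g_1,\ldots,\widetilde g_c$ by inclusion--exclusion, so that $h^{(1)}=\widetilde g_1$ and $h^{(2)}(x,y)=\widetilde g_2(x,y)-\widetilde g_1(x)-\widetilde g_1(y)$. The different $\mathbb U_n^{(c)}$ are mutually uncorrelated, and $\Var \mathbb U_n^{(c)}=\binom{n}{c}^{-1}\E[h^{(c)}(X_1,\ldots,X_c)^2]$.

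Since $\zeta_1=0$, we have $\widetilde g_1=0$ a.e. Hence $h^{(1)}=0$ and $h^{(2)}=\widetilde g_2$. For $c\ge 3$ the variance bound gives $n^2\Var \mathbb U_n^{(c)}=O(n^{-1})$. Therefore
\[
n(\mathbb U_n-g_0)=\binom m2\, n\,\mathbb U_n^{(2)}+o_P(1),
\qquad
\mathbb U_n^{(2)}=\binom n2^{-1}\sum_{i<j}\widetilde g_2(X_i,X_j).
\]

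\textbf{Step 2: Spectral expansion.} The kernel $\widetilde g_2$ is symmetric and lies in $L^2(\mu\otimes\mu)$, so $T$ is self-adjoint and Hilbert--Schmidt. Thus
\[
\widetilde g_2(x,y)=\sum_j\lambda_j\varphi_j(x)\varphi_j(y)
\]
in $L^2(\mu\otimes\mu)$, with orthonormal eigenfunctions $\varphi_j$ and $\sum_j\lambda_j^2=\zeta_2<\infty$. Because $\int\widetilde g_2(x,y)\,\mu(\dint y)=\widetilde g_1(x)=0$, the constant function $1$ lies in the kernel of $T$. Consequently $\E\varphi_j(X_1)=0$ for every $j$ with $\lambda_j\neq0$, and $\Var\varphi_j(X_1)=1$.

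For fixed $K$, put $S_{n,j}:=n^{-1/2}\sum_i\varphi_j(X_i)$. The truncated statistic is then
\[
n\binom n2^{-1}\sum_{i<l}\sum_{j\le K}\lambda_j\varphi_j(X_i)\varphi_j(X_l)
=\frac{n}{n-1}\sum_{j\le K}\lambda_j\Bigl(S_{n,j}^2-\frac1n\sum_{i}\varphi_j(X_i)^2\Bigr).
\]
The multivariate CLT applies, since the covariance of $(\varphi_1,\ldots,\varphi_K)(X_1)$ is the identity; together with the strong LLN, $\frac1n\sum_i\varphi_j(X_i)^2\to1$. By the continuous mapping theorem, the right-hand side converges to $\sum_{j\le K}\lambda_j(\xi_j^2-1)$.

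\textbf{Step 3: Removing the truncation (main obstacle).} The difficulty is uniform-in-$n$ control of the tail. The remainder kernel $r_K:=\sum_{j>K}\lambda_j\varphi_j\otimes\varphi_j$ is again completely degenerate, so the variance formula from Step 1 gives
\[
\E\Bigl[\Bigl(n\binom n2^{-1}\sum_{i<l}r_K(X_i,X_l)\Bigr)^2\Bigr]=\frac{2n}{n-1}\sum_{j>K}\lambda_j^2 .
\]
This tends to $0$ as $K\to\infty$, uniformly in $n$.

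On the limit side, $\sum_j\lambda_j(\xi_j^2-1)$ converges in $L^2$, because its terms are orthogonal and their variances are $2\lambda_j^2$, which are summable.

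\textbf{Conclusion.} The standard approximation lemma (Billingsley, Theorem~3.2) combines three facts: convergence for each fixed $K$, uniform smallness of the remainder, and convergence of the truncated limits. It yields $n\,\mathbb U_n^{(2)}\todistr\sum_j\lambda_j(\xi_j^2-1)$. Multiplying by $\binom m2$ and invoking Step 1 (Slutsky) proves the theorem.
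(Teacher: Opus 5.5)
Your proof is correct. The Hoeffding decomposition reduces $n(\mathbb U_n-\E\mathbb U_n)$ to $\binom m2\, n\,\mathbb U_n^{(2)}$ with kernel $\widetilde g_2$. The eigen-expansion of $\widetilde g_2$ then gives the limit, using the multivariate CLT for the truncated part, the exact bound $\frac{2n}{n-1}\sum_{j>K}\lambda_j^2$ on the degenerate remainder, and Billingsley's approximation theorem.

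The paper does not prove this theorem; it quotes it as a classical result (Serfling, Section~5.5.2; Lee, Section~3.2.2; Korolyuk--Borovskikh, Chapter~4). Your argument is essentially the standard proof given in those references.
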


The series $\sum_{j=1}^\infty \lambda_j\,(\xi_j^2-1)$ converges in $L^2$ and a.s.\ (since $\sum_{j=1}^\infty \lambda_j^2<\infty$ by the Hilbert--Schmidt property of $T$ and $\E[\xi_j^4] < \infty$) and hence defines a proper random limit.

\subsection{Proof of Theorem~\ref{theo:distributional_limit_face_count_random_cone}: Representation of face count as \texorpdfstring{$U$}{U}-statistic}\label{subsec:proof_theo_f_vect_U_stat_part_0}
%Sections~\ref{subsec:proof_theo_f_vect_U_stat_part_0}, \ref{subsec:proof_theo_f_vect_U_stat_part_1}, \ref{subsec:proof_theo_f_vect_U_stat_part_1_A}, \ref{subsec:proof_theo_f_vect_U_stat_part_2}, \ref{subsec:proof_theo_f_vect_U_stat_part_3}, \ref{subsec:proof_theo_f_vect_U_stat_part_4_k_equals_1} are devoted to the proof of Theorem~\ref{theo:distributional_limit_face_count_random_cone}.

Sections~\ref{subsec:proof_theo_f_vect_U_stat_part_0} -- \ref{subsec:proof_theo_f_vect_U_stat_part_4_k_equals_1} are devoted to the proof of Theorem~\ref{theo:distributional_limit_face_count_random_cone}.

The first step is to interpret $f_\ell(\cW_{n,d})/\binom n\ell$ as a $U$-statistic, which becomes possible after passing to the Gale dual. Consider the random cone $\cW_{n,d}$ with $n > d \geq 2$ and let $\ell \in \{0,\ldots, d-1\}$.  With probability $1$, the vectors $U_1,\ldots, U_n$ spanning this cone are in general linear position. Thus, every $\ell$-dimensional face of $\cW_{n,d}$ has the form $\pos (U_i: i\in I)$ for some unique set $I\subseteq [n]$ with $\# I = \ell$ and hence
$$
f_{\ell} (\cW_{n,d}) = \sum_{\substack{I\subseteq [n]\\ \#I = \ell}} \ind_{\{\pos (U_i: i\in  I) \text{ is a face of } \cW_{n,d}\}}.
%\qquad A_I=
$$
We now use the coupling given in Theorem~\ref{theo:gale_duality_stoch_orthant} (with $k=n-d$). Recall that the random vectors $V_1,\ldots, V_n$ appearing in that theorem are independent and uniformly distributed on the unit sphere  $\bS^{n-d-1}\subseteq \R^{n-d}$.  By Theorem~\ref{theo:gale_duality_stoch_orthant}, we have the following equality of random events:
\begin{equation*}%\label{eq:face_event_gale_dual_repeat}
\{\pos (U_i: i\in  I) \text{ is a face of } \cW_{n,d}\}  = \{\pos (V_j : j\in I^c)= \R^{n-d}\},
\end{equation*}
%For a subset $I\subseteq [n]$ consider the cone $F_I = \pos (U_i: i\in I)$ and the random event
%$$
%A_I:= \{F_I \text{ is a face of } \cW_{n,d}\}.
%$$
for all $I \subseteq [n]$  with $\#I \leq d-1$.
Putting $J:= I^c$ and dividing by $\binom n\ell$ we arrive at the representation
\begin{equation}\label{eq:face_count_gale_dual}
\frac{f_{\ell} (\cW_{n,d})}{\binom n\ell}
%=
%\sum_{\substack{I\subseteq [n]\\ \#I = \ell}} \ind_{\{\pos (U_i: i\in  I) \text{ is a face of } \cW_{n,d}\}}
=
\binom n{n-\ell}^{-1} \sum_{\substack{J\subseteq [n]\\ \#J = n-\ell}} \ind_{\{\pos (V_j: j\in J) = \R^{n-d}\}}, \qquad \ell \in \{0,\ldots, d-1\}.
\end{equation}
The right-hand side is a $U$-statistic with the sample space $\mathcal X = \bS^{n-d-1}$ (endowed with the uniform distribution $\nu_{n-d}$) and the kernel $h: \mathcal X^{n-\ell}\to \{0,1\}$ given by
$$
h(v_1,\ldots, v_{n-\ell}) = \ind_{\{\pos (v_1,\ldots, v_{n-\ell}) = \R^{n-d}\}},  \qquad v_1,\ldots, v_{n-\ell} \in \bS^{n-d-1}.
$$

Recall that in  Theorem~\ref{theo:distributional_limit_face_count_random_cone} we consider $f_\ell(\cW_{n,d})$, where the number of points is  $n=d+k$, the dimension of faces is $\ell = d-q$, and the parameters  $k\in \N$ and $q \in \N$ are fixed. Let us summarize our setting:
\begin{itemize}
\item The sample space is $\mathcal X = \bS^{k-1}$, endowed with the uniform distribution $\nu_k$.
\item  $V_1,\ldots, V_n$, where $n=d+k$,  are i.i.d.\ random vectors uniformly distributed on $\bS^{k-1}$.
\item The order of the $U$-statistic is $m:= n-\ell =  k+q$ and the kernel $h: \mathcal X^{m} \to \{0,1\}$ is
$$
h(v_1,\ldots, v_m) = \ind_{\{\pos (v_1,\ldots, v_{m}) = \R^{k}\}},  \qquad v_1,\ldots, v_{m} \in \bS^{k-1}.
$$
\end{itemize}
By~\eqref{eq:face_count_gale_dual} we have the identification
\begin{equation}\label{eq:f_vect_as_U_stats_theorem_weak_conv}
\mathbb U_n
=
\binom{d+k}{m}^{-1}
\sum_{1\le i_1<\cdots<i_{m}\le d+k}
\ind_{\{\pos (V_{i_1},\ldots, V_{i_{m}}) = \R^{k}\}}
=
\frac{f_{d-q}(\cW_{d+k,d})}{\binom {d+k}{d-q}}.
\end{equation}

Our aim is apply Theorem~\ref{theo:weak_limit_U_stat_c_2} to $\mathbb U_n$. In the following Sections~\ref{subsec:proof_theo_f_vect_U_stat_part_1} -- \ref{subsec:proof_theo_f_vect_U_stat_part_3}  we assume that $k\geq 2$ --- the case $k=1$ (in which the sphere becomes degenerate) requires a separate treatment (although many arguments remain valid for $k=1$) and will be considered in Section~\ref{subsec:proof_theo_f_vect_U_stat_part_4_k_equals_1}.

\subsection{Proof of Theorem~\ref{theo:distributional_limit_face_count_random_cone}: Conditional expectation kernels}\label{subsec:proof_theo_f_vect_U_stat_part_1}
In the present section,
%Section~\ref{subsec:proof_theo_f_vect_U_stat_part_1},
we determine the conditional expectation kernels of the $U$-statistic $\mathbb U_n$. In particular, we shall see that $\zeta_1= 0$ and $\zeta_2>0$.

Recall from Section~\ref{subsec:U_stat_information} that for  $c\in \{1,\ldots, m\}$ and unit vectors $v_1,\ldots, v_c\in \bS^{k-1}$,  we consider the conditional expectation kernels
$$
g_c(v_1,\ldots, v_c) = \E h(v_1,\ldots, v_c, V_{c+1},\ldots, V_m) = \P[\pos (v_1,\ldots, v_c, V_{c+1},\ldots, V_m) = \R^k].
$$
Wendel's formula~\eqref{eq:wendel_formula} gives
$$
g_0 = \E h(V_1,\ldots, V_m) =  \P[\pos (V_1,\ldots, V_m) = \R^k] = p(m,k), \qquad \E \mathbb U_n = g_0 = p(m,k).
$$
The centered version of $g_c$ is therefore
$$
\widetilde g_c (v_1,\ldots, v_c) = g_c(v_1,\ldots, v_c) - \E h(V_1,\ldots, V_m) = g_c(v_1,\ldots, v_c) - p(m,k).
$$

\begin{lemma}[$c=1$]\label{eq:u_stat_zeta_1_h_1}
We have $g_1(v_1) = p(m,k)$ and $\widetilde g_1(v_1) = 0$ for every $v_1\in \bS^{k-1}$.  Hence $\zeta_1 = 0$.
\end{lemma}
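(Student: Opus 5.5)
We need $g_1(v_1)=\P[\pos(v_1,V_2,\ldots,V_m)=\R^k]$ for fixed $v_1\in\bS^{k-1}$, and we must show it equals $p(m,k)$. The quickest route uses rotational invariance. For any orthogonal $O\in O(k)$, the tuple $(OV_2,\ldots,OV_m)$ has the same law as $(V_2,\ldots,V_m)$. Moreover, $\pos(Ov_1,OV_2,\ldots,OV_m)=O\,\pos(v_1,V_2,\ldots,V_m)$, and this cone equals $\R^k$ exactly when $\pos(v_1,V_2,\ldots,V_m)$ does. It follows that $g_1(Ov_1)=g_1(v_1)$. Since $O(k)$ acts transitively on $\bS^{k-1}$ for $k\ge 2$, $g_1$ is constant on the sphere.

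To identify the constant, we average. For $V_1$ uniform and independent of $V_2,\ldots,V_m$,
\[
g_1(v_1)=\E g_1(V_1)=\P[\pos(V_1,\ldots,V_m)=\R^k]=p(m,k),
\]
where the last equality is Wendel's formula~\eqref{eq:wendel_formula}. Hence $\widetilde g_1\equiv 0$, and $\zeta_1=\E[\widetilde g_1(V_1)^2]=0$.

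There is one technical point: measurability and Fubini are needed to write $\E g_1(V_1)$ as the probability above. This holds because $h$ is a Borel indicator, so the conditional expectation kernel is defined pointwise through independence. For $k=1$ the same argument still works. There $\bS^0=\{\pm1\}$, and the reflection $x\mapsto -x$ is an isometry that swaps the two points and preserves the uniform law, so transitivity holds there as well. No step here is a real obstacle; the only point needing care is that the invariance holds for every $v_1$, not just almost every, which the exact equivariance of $\pos$ under $O$ ensures.
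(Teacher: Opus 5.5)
Your proposal is correct and follows essentially the same route as the paper: rotation invariance shows that $g_1(v_1)=\P[\pos(v_1,V_2,\ldots,V_m)=\R^k]$ does not depend on $v_1$, so it equals $\P[\pos(V_1,\ldots,V_m)=\R^k]=p(m,k)$ by Wendel's formula. You spell out the constancy step and the averaging step (Fubini) that the paper compresses into a single line.
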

\begin{proof}
Indeed, by rotation invariance,  for every $v_1\in \bS^{k-1}$ we have
$$
g_1(v_1) = \P[\pos (v_1, V_{2},\ldots, V_m) = \R^k] = \P[\pos (V_1, V_{2},\ldots, V_m) = \R^k] = p(m,k).
$$
Hence $\widetilde g_1(v_1) = g_1(v_1) - p(m,k) = 0$ and $\zeta_1 = \Var\widetilde g_1(V_1) = 0$.
\end{proof}

The next lemma provides an explicit formula for the conditional expectation kernel $g_c(v_1,\ldots, v_c)$ of arbitrary order.
The formula is expressed in terms of conic intrinsic volumes.
To each closed convex cone \(C \subseteq \mathbb{R}^k\) one can associate conic intrinsic volumes \(\upsilon_0(C),\ldots,\upsilon_k(C)\),
which are nonnegative and satisfy \(\upsilon_0(C)+\cdots+\upsilon_k(C)=1\).
We refer to \cite{AmelunxenLotzDCG17,ALMT14} and \cite[Section~2.3]{schneider_book_convex_cones_probab_geom} for their definition and properties.

\begin{lemma}[Conditional expectation kernels] \label{lem:projection_kernels_any_c}
Let $c\in \{1,\ldots, m-1\}$.  Suppose that $v_1,\ldots, v_c \in \bS^{k-1}$, $k\geq 2$,  are such that the polyhedral cone $C= \pos (v_1,\ldots, v_c)\subseteq \R^k$ is not a linear subspace. Then
\begin{equation}\label{eq:lem:projection_kernels_any_c}
g_c(v_1,\ldots, v_c) = 2 \sum_{\substack{i=1,3,5,\ldots \\ i\leq k}} \sum_{j=0}^{k-i} \upsilon_{k-j}(C) \left(\frac{\ind_{\{i+j \neq k\}}}{2^{m-c}}\binom {m-c}{i+j} + \frac{\ind_{\{i+j = k\}}}{2^{m-c}}\sum_{\ell =k}^{m-c} \binom {m-c}{\ell}  \right),
\end{equation}
where $\upsilon_0(C),\ldots, \upsilon_k(C)$ are the conic intrinsic volumes of the cone $C$.
\end{lemma}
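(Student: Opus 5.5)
Set $N:=m-c$ and let $V_{c+1},\ldots,V_m$ be i.i.d.\ uniform on $\bS^{k-1}$. By definition, $g_c(v_1,\ldots,v_c)=\P[\pos(C\cup\{V_{c+1},\ldots,V_m\})=\R^k]$. The plan is to pass to polars: $\pos(C,V_{c+1},\ldots,V_m)=\R^k$ holds if and only if
\[
C^\circ\cap\bigcap_{j=c+1}^m\{y:\langle y,V_j\rangle\le 0\}=\{0\}.
\]
Thus we need the probability that $N$ independent uniformly random half-spaces cut the cone $C^\circ$ down to the origin. This is a Wendel-type problem for a fixed cone intersected with random half-spaces, and such problems are classically solved by the conic kinematic (Crofton) formula. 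I would use the known result, e.g.\ from Hug--Schneider's work on conical tessellations or Schneider's book, Section~2.3 and Chapter~6: the random hyperplanes $V_j^\perp$ form an $N$-hyperplane arrangement that almost surely is in general position with respect to $C^\circ$.

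The key step is to express $\P[C^\circ\cap D\neq\{0\}]$, where $D$ is the Schläfli-type cone cut out by the half-spaces. I would proceed by symmetrization. Replace each $V_j$ by $\eps_jV_j$ with independent random signs $\eps_j$, which does not change the law. For a fixed arrangement, the $2^N$ sign choices produce all the chambers of the arrangement, each counted once, together with empty intersections. Hence
\[
g_c = 1-2^{-N}\,\E\bigl[\#\{\text{chambers } D \text{ of the arrangement with } D\cap C^\circ\neq\{0\}\}\bigr].
\]
Here I use that $D\cap C^\circ$ is nontrivial only when it has interior, which holds almost surely given $C^\circ$ is full-dimensional or generic. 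The number of chambers meeting $C^\circ$ equals the number of regions into which the hyperplanes cut $C^\circ$. For hyperplanes in general position relative to $C^\circ$, Euler-characteristic arguments (the conic Steiner--Schläfli / Cover--Efron count) show that this number depends only on the intersections of $C^\circ$ with the subspaces $L_I=\bigcap_{j\in I}V_j^\perp$, $\#I=r$. Specifically, it equals $\sum_{r}\#\{I:\#I=r,\ C^\circ\cap L_I\neq\{0\}\}$, counted with the appropriate parity corrections.

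Taking expectations, the conic Crofton formula gives $\P[C^\circ\cap L\neq\{0\}]=2\sum_{i\text{ odd}}\upsilon_{r+i}(C^\circ)$ for a uniform subspace $L$ of codimension $r$. Then I would use $\upsilon_{r+i}(C^\circ)=\upsilon_{k-r-i}(C)$, set $j:=$ the index shift so that the binomial $\binom{N}{i+j}$ appears, and sum over $r$. Those terms where $r\ge k$ would force $L_I=\{0\}$ but still enter through the full-space contribution. They collapse into the tail sum $\sum_{\ell=k}^{N}\binom N\ell$ attached to $i+j=k$, i.e.\ to $\upsilon_{k-j}(C)$ with $j=k-i$. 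Finally I would rewrite $1-(\cdots)$ using $\sum_j\upsilon_j(C)=1$ and the Gauss--Bonnet relation $\sum_{j\text{ even}}\upsilon_j=\sum_{j\text{ odd}}\upsilon_j=\tfrac12$, which requires that $C$ is not a subspace. This yields the stated double sum. As a sanity check, taking $C$ a ray ($c=1$) should return Wendel's $p(m,k)$.

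The main obstacle will be the bookkeeping in the chamber count of a cone cut by hyperplanes. One has to get the exact formula with the correct parity factor of 2 and the boundary case $i+j=k$. I would first try to verify the formula via the probabilistic identity $\P[C^\circ\cap D\ne\{0\}]=2\sum\ldots$ directly, using the known "random Schläfli cone meets fixed cone" formula of Hug--Schneider.
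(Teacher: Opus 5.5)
Your argument is correct, but it takes a different route from the paper's.

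\textbf{The paper's route.} The paper conditions on $V_{c+1},\dots,V_m$, replaces $D^\circ$ by a Haar-rotated copy, and applies the conic kinematic formula to the pair $(C^\circ,D^\circ)$. After using $\upsilon_j(K^\circ)=\upsilon_{k-j}(K)$, only $\E\,\upsilon_{i+j}(D)$ with $D\eqdistr\cW_{m-c,k}$ remains. The cited expected conic intrinsic volumes of $\cW_{m-c,k}$ then give \eqref{eq:lem:projection_kernels_any_c} directly; the tail sum is just $\E\,\upsilon_k(\cW_{m-c,k})$.

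\textbf{Your route.} You keep $C^\circ$ fixed, symmetrize the signs, count the regions into which the random hyperplanes $V_j^\perp$ cut $C^\circ$, and use only the conic Crofton formula on the flats $L_I$. In effect you redo, for an arbitrary fixed cone, the Cover--Efron/Hug--Schneider computation that underlies those expected intrinsic volumes. So your proof needs neither the kinematic formula nor the external proposition. The price is a region-counting lemma and a final rewriting step.

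\textbf{Points to tighten.} First, no parity corrections arise. $C^\circ$ is not a subspace, and almost surely every nonzero $C^\circ\cap L_I$ is again not a subspace. Adding the hyperplanes one at a time therefore gives exactly $\#\{\text{regions}\}=\sum_{I}\ind\{C^\circ\cap L_I\neq\{0\}\}$. Second, ``has interior'' should read ``has nonempty relative interior in $\lin C^\circ$'', because $C^\circ$ is lower-dimensional when $C$ contains a line. With these, you obtain
\[
g_c=1-2^{1-N}\sum_{r=0}^{k-1}\binom Nr\sum_{i\text{ odd}}\upsilon_{k-r-i}(C),\qquad N=m-c .
\]
Converting this to the stated form is not a mere index shift. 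Write $1=2^{-N}\sum_{r=0}^N\binom Nr$ and apply Gauss--Bonnet (valid since $C$ is not a subspace).
\begin{itemize}
\item For $r<k$ this gives $1-2\sum_{i\text{ odd}}\upsilon_{k-r-i}(C)=2\sum_{i\text{ odd},\,i\le r}\upsilon_{k-r+i}(C)$.
\item For $r\ge k$, the coefficient $1$ of $\binom Nr$ equals $2\sum_{i\text{ odd}}\upsilon_i(C)$.
\end{itemize}
Setting $j=r-i$ in the first case and $j=k-i$ for the tail yields exactly \eqref{eq:lem:projection_kernels_any_c}.
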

\begin{proof}%[Proof of Lemma~\ref{lem:projection_kernels_any_c}]
Put $D:= \pos (V_{c+1},\ldots, V_m)\subseteq\R^k$. Passing to dual cones, we get
$$
g_c(v_1,\ldots, v_c) = \P[\pos (C\cup D) = \R^k] = \P[C^\circ \cap D^\circ = \{0\}].
$$
Let $\mathbb O$ be a random $k\times k$-matrix that is uniformly (Haar) distributed on the group $SO(k)$. Additionally, let $\mathbb O$ be independent of the $\sigma$-algebra $\mathcal V$  generated by $V_1,\ldots, V_n$. Since the law of $D$ (and hence $D^\circ$) is invariant under $SO(k)$, the randomly rotated cone $\mathbb O D^\circ$ has the same distribution as $D^\circ$ and we can write
$$
g_c(v_1,\ldots, v_c) = \P[C^\circ \cap \mathbb O D^\circ = \{0\}] =  \E \Big[\, \P\left[ C^\circ \cap \mathbb O D^\circ = \{0\}\,| \, \mathcal
V\right] \,\Big].
$$

In the conditional probability, the only random element is the random matrix $\mathbb O$. To compute the conditional probability,  we invoke the conic kinematic formula whose proof can be found in~\cite[Corollary~5.2]{AmelunxenLotzDCG17} or in~\cite[Theorem~ 4.3.5]{schneider_book_convex_cones_probab_geom}; for applications see also~\cite{AmelunxenLotzDCG17}.   The formula states that for (deterministic) polyhedral cones $A, B\subseteq \R^k$ that are not both linear subspaces,
$$
\P[A \cap \mathbb O B = \{0\}] = 2 \sum_{\substack{i=1,3,5,\ldots \\ i\leq k}} \upsilon_{k-i}(A \oplus B) = 2 \sum_{\substack{i=1,3,5,\ldots \\ i\leq k}} \sum_{j=0}^{k-i} \upsilon_{j}(A) \upsilon_{k-i-j}(B).
$$
Here, $A\oplus B  := \{(a,b): a\in A, b\in B\} \subseteq \R^k \oplus \R^k$ denotes the direct (orthogonal) sum of $A$ and $B$.
Recall that $v_1,\ldots, v_c \in \bS^{k-1}$ are such that $C= \pos (v_1,\ldots, v_c)$ (and hence $C^\circ$) is not a linear subspace. The kinematic formula gives
$$
\P\left[C^\circ \cap \mathbb O D^\circ = \{0\}\,| \, \mathcal V\right]
=
2 \sum_{\substack{i=1,3,5,\ldots \\ i\leq k}} \sum_{j=0}^{k-i} \upsilon_{j}(C^\circ) \upsilon_{k-i-j}(D^\circ)
=
2 \sum_{\substack{i=1,3,5,\ldots \\ i\leq k}} \sum_{j=0}^{k-i} \upsilon_{k-j}(C) \upsilon_{i+j}(D),
$$
where we used the duality relation $\upsilon_j(C^\circ) = \upsilon_{k-j} (C)$ and similarly for $D$.
Recall that $C$ is deterministic and $D$ is random.  Taking expectation  gives
\begin{equation}\label{eq:proj_kernel_through_expected_conic_tech}
g_c(v_1,\ldots, v_c) = 2 \sum_{\substack{i=1,3,5,\ldots \\ i\leq k}} \sum_{j=0}^{k-i} \upsilon_{k-j}(C) \E \upsilon_{i+j}(D).
\end{equation}
Next recall that $D= \pos (V_{c+1},\ldots, V_m)$ has the same law as the random polyhedral cone $\cW_{m-c, k}$. To proceed, we need formulas for the expected conic intrinsic volumes of this cone.
\begin{proposition}[Expected conic intrinsic volumes of $\cW_{n,d}$]
For all $d\in \N$, $n\in \N$ we have
\begin{align}
&\E \left[\upsilon_k(\cW_{n,d})\right]
=
\E \left[\upsilon_k(\cW_{n,d}) \ind_{\{\cW_{n,d}\neq \R^d\}}\right]
=
\frac 1 {2^n} \binom nk,
\qquad k \in \{0,\dots,d-1\},
\label{eq:donoho_tanner_cone_expect_2}
\\
&\E \left[\upsilon_d(\cW_{n,d})\right]
= 1- \frac 1 {2^n} \sum_{\ell=0}^{d-1}  \binom n\ell = \frac 1 {2^n} \sum_{\ell=d}^{n}  \binom n\ell.
\label{eq:donoho_tanner_cone_expect_3_a}
%\\
%&\E \left[\upsilon_d(\cW_{n,d})\ind_{\{\cW_{n,d}\neq \R^d\}}\right]
%= \frac{1}{2^n} \sum_{\ell=0}^{d-1} (-1)^{d-1-\ell} \binom{n}{\ell}
%=
%\frac 1 {2^n} \binom{n-1}{d-1}.
%\label{eq:donoho_tanner_cone_expect_3_b}
\end{align}
In the special case where $d\in \N$ and $n\in \{1,\ldots, d\}$, these equations state that
\begin{equation}\label{eq:donoho_tanner_cone_expect_5}
\E [\upsilon_k(\cW_{n,d})] = \frac 1 {2^n} \binom nk,  \qquad   k \in \{0,\dots,n\}.
\end{equation}
\end{proposition}
\begin{proof}
Equations~\eqref{eq:donoho_tanner_cone_expect_2} and~\eqref{eq:donoho_tanner_cone_expect_3_a} are proved in~\cite[Lemma~5.1]{godland_kabluchko_thaele_cones_high_dim_part_I} (building on~\cite[Corollary~4.3]{hug_schneider_conical_tessellations}); where it is assumed that $n\ge d$. See also~\cite[Theorem~5.18]{KabluchkoSteigenberger2025BetaPolytopesCones} for an alternative approach.
We sketch a proof of~\eqref{eq:donoho_tanner_cone_expect_5} for $n\in\{1,\dots,d\}$; for background on conic intrinsic volumes and hyperplane arrangements we refer to~\cite[Section~5.2]{schneider_book_convex_cones_probab_geom}.
For each sign vector $\eps=(\eps_1,\dots,\eps_n)\in\{\pm1\}^n$, consider the random cone
$
C_{\eps}:=\pos(\eps_1U_1,\dots,\eps_nU_n)
$.
By symmetry, $\E\,\upsilon_k(C_\eps)=\E\,\upsilon_k(\cW_{n,d})$ for every $\eps$.
On the other hand, with probability~$1$, the cones $C_\eps$ are precisely the chambers of a hyperplane arrangement in $\lin(U_1,\dots,U_n)$ that has the same intersection poset as the coordinate hyperplane arrangement in $\R^n$, whose chambers are the $2^n$ orthants. Since the $k$th conic intrinsic volume of an orthant in $\R^n$ equals $\frac{1}{2^n}\binom{n}{k}$, the Klivans--Swartz formula~\cite[Theorem~5.1.4]{schneider_book_convex_cones_probab_geom} yields
$
\sum_{\eps\in\{\pm1\}^n}\upsilon_k(C_\eps)=\binom{n}{k}$ a.s.
Taking expectations  gives
$
2^n\,\E\,\upsilon_k(\cW_{n,d})
=\sum_{\eps\in\{\pm1\}^n}\E\,\upsilon_k(C_\eps)
=\binom{n}{k}
$,
which is precisely~\eqref{eq:donoho_tanner_cone_expect_5}.
\end{proof}

Returning to~\eqref{eq:proj_kernel_through_expected_conic_tech} and plugging the values of $\E \upsilon_{i+j}(D) = \E \upsilon_{i+j}(\cW_{m-c, k})$ given by~\eqref{eq:donoho_tanner_cone_expect_2} an~\eqref{eq:donoho_tanner_cone_expect_3_a}  yields
$$
g_c(v_1,\ldots, v_c) = 2 \sum_{\substack{i=1,3,5,\ldots \\ i\leq k}} \sum_{j=0}^{k-i} \upsilon_{k-j}(C) \left(\frac{\ind_{\{i+j \neq k\}}}{2^{m-c}}\binom {m-c}{i+j} + \frac{\ind_{\{i+j = k\}}}{2^{m-c}} \sum_{\ell =k}^{m-c} \binom {m-c}{\ell}  \right).
$$
The proof of Lemma~\ref{lem:projection_kernels_any_c} is complete.
\end{proof}

We have already seen that $g_0 = g_1(v_1) = p(m,k)$ and hence $\widetilde g_1(v_1) = 0$ for all $v_1\in \bS^{k-1}$. The next step is to compute $g_2(v_1,v_2)$.
\begin{lemma}[Conditional expectation kernel for $c=2$]\label{lem:u_stat_proj_kernel_c_2}
Let $v_1,v_2 \in \bS^{k-1}$, $k\geq 2$,  be two vectors such that $v_1\neq \pm v_2$. Then,
$$
\widetilde g_2(v_1,v_2) = \frac 1 {2^{m-1}} \binom {m-2}{k-1} \left(\frac 2 {\pi} \arccos \langle v_1,v_2\rangle -1\right) = - \frac 1 {2^{m-1}} \binom {m-2}{k-1} \cdot \frac 2 {\pi} \arcsin \langle v_1,v_2\rangle.
$$
\end{lemma}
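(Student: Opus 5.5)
Apply Lemma~\ref{lem:projection_kernels_any_c} with $c=2$ and $C=\pos(v_1,v_2)$. For $v_1\neq\pm v_2$ this is a two-dimensional pointed cone (an angle of opening $\theta:=\arccos\langle v_1,v_2\rangle\in(0,\pi)$ inside a plane of $\R^k$), so it is not a linear subspace and the lemma applies. Its conic intrinsic volumes are explicit: $\upsilon_2(C)=\theta/(2\pi)$, $\upsilon_1(C)=1/2$, $\upsilon_0(C)=(\pi-\theta)/(2\pi)$, and $\upsilon_j(C)=0$ for $j\geq 3$. Indeed, the cone is the product of a planar angle of size $\theta$ with $\{0\}$ in the orthogonal complement, and conic intrinsic volumes are intrinsic to the linear hull.

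In the double sum of \eqref{eq:lem:projection_kernels_any_c} only $\upsilon_{k-j}(C)$ with $k-j\in\{0,1,2\}$ survive, i.e.\ $j\in\{k,k-1,k-2\}$. Since $i\geq 1$ is odd and $j\leq k-i$, the term $j=k$ never occurs. The term $j=k-1$ occurs only with $i=1$, in which case $i+j=k$. The term $j=k-2$ occurs only with $i=1$, giving $i+j=k-1\neq k$. Writing $N:=m-2$ and $S_k:=\sum_{\ell=k}^{N}\binom N\ell$, this gives
\[
g_2(v_1,v_2)=\frac{2}{2^{N}}\Big(\tfrac12 S_k+\tfrac{\theta}{2\pi}\binom{N}{k-1}\Big).
\]

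Next, subtract $p(m,k)=2^{-(m-1)}\sum_{\ell=k}^{m-1}\binom{m-1}{\ell}$. By Pascal's rule, $\binom{m-1}{\ell}=\binom{N}{\ell}+\binom{N}{\ell-1}$, so
\[
\sum_{\ell\ge k}\binom{m-1}{\ell}=2S_k+\binom{N}{k-1},
\qquad
p(m,k)=2^{-N}\Big(S_k+\tfrac12\binom N{k-1}\Big).
\]
Hence $\widetilde g_2=2^{-N}\binom{N}{k-1}\big(\theta/\pi-\tfrac12\big)=2^{-(m-1)}\binom{m-2}{k-1}\big(\tfrac{2}{\pi}\arccos\langle v_1,v_2\rangle-1\big)$. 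The second form follows from $\arccos x=\pi/2-\arcsin x$.

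The main care point is bookkeeping: identifying exactly which $(i,j)$ pairs contribute, and correctly handling the case $k=2$. When $k=2$, the index $j=k-2=0$ with $i=1$ still gives $i+j=1\neq 2$, so the argument is uniform. The intrinsic volumes of a planar angle embedded in $\R^k$ should be cited as standard.
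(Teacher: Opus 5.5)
Your proposal is correct and follows the paper's proof essentially verbatim. Like the paper, you apply Lemma~\ref{lem:projection_kernels_any_c} with $c=2$ to the wedge $C=\pos(v_1,v_2)$, keep only the pairs $(i,j)=(1,k-1)$ and $(1,k-2)$, and subtract Wendel's $p(m,k)$ after rewriting it via Pascal's rule; your bookkeeping (including the case $k=2$ and the identity $\arccos x=\pi/2-\arcsin x$) is accurate.
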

\begin{proof}
First we compute $g_2(v_1,v_2)$ by applying Lemma~\ref{lem:projection_kernels_any_c} with  $c=2$. The cone $C=\pos (v_1,v_2)$ is a wedge with angle $\alpha = \arccos \langle v_1,v_2\rangle$ (measured in radians). Its conic intrinsic volumes are given by $\upsilon_0(C) = \frac 12 - \frac \alpha{2\pi}$, $\upsilon_1(C) = \frac 12$, $\upsilon_2(C) = \frac{\alpha}{2\pi}$, and $\upsilon_m(C) = 0$ whenever $m\geq 3$. So $\upsilon_{k-j}(C) = 0$ unless $k-j\in \{0,1,2\}$. It follows that only the pairs $(i,j) = (1, k-1)$ and $(i,j) = (1, k-2)$ contribute to the sum in~\eqref{eq:lem:projection_kernels_any_c}. So
\begin{align*}
g_2(v_1,v_2)
&=
2 \upsilon_1(C) \cdot \frac{1}{2^{m-2}} \sum_{\ell =k}^{m-2} \binom {m-2}{\ell}  + 2 \upsilon_2(C) \cdot  \frac{1}{2^{m-2}}\binom {m-2}{k-1}
\\
&=
\frac{1}{2^{m-2}} \sum_{\ell =k}^{m-2} \binom {m-2}{\ell}  + \frac{\alpha}{2^{m-2} \pi}\binom {m-2}{k-1}.
\end{align*}
Wendel's formula~\eqref{eq:wendel_formula} and the defining property of the Pascal triangle give
$$
p(m,k) = \P[\cW_{m,k} = \R^k] =  \frac 1  {2^{m-1}} \sum_{\ell=k}^{m-1} \binom{m-1}{\ell} = \frac 1 {2^{m-2}} \sum_{\ell = k}^{m-2} \binom {m-2}{\ell} + \frac 1 {2^{m-1}} \binom {m-2}{k-1}.
$$
Plugging these formulas into $\widetilde g_2(v_1,v_2) = g_2(v_1,v_2) - p(m,k)$ gives the stated formula for $\widetilde g_2(v_1,v_2)$.
\end{proof}

\subsection{Proof of Theorem~\ref{theo:distributional_limit_face_count_random_cone}: The variance}\label{subsec:proof_theo_f_vect_U_stat_part_1_A}
We have already seen that $\zeta_1 = \Var g_1 (V_1) = 0$. In the present section, we compute $\zeta_2 = \Var g_2(V_1,V_2)$. We shall see that $\zeta_2>0$, which means that the $U$-statistic $\mathbb U_n$ is degenerate of order $2$.
\begin{lemma}[Angle between two random unit vectors]\label{lem:angle_between_two_vectors}
Let $V_1,V_2$ be independent and uniformly distributed on $\bS^{k-1}$, $k\geq 2$. Let $\Theta: = \arccos \langle V_1,V_2\rangle \in [0,\pi]$ be the angle between these vectors. Then,
$$
\E \Theta  =  \frac \pi 2,
\qquad
\Var \Theta  =  \frac 12 \psi_1 \left(\frac k2\right),
$$
where $\psi_1(x) = \frac{\dint^2}{\dint x^2}\log\Gamma(x) = \sum_{j=0}^\infty \frac{1}{(j+x)^2}>0$, with  $x>0$, is the trigamma function.
% defined for $x>0$ by $\psi_1(x):=\frac{\dint^2}{\dint x^2}\log\Gamma(x)=\sum_{j=0}^\infty \frac{1}{(j+x)^2}$.
\end{lemma}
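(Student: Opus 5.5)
By rotation invariance, condition on $V_1$ and take $V_1=e_1$. Then $T:=\langle V_1,V_2\rangle$ has the law of the first coordinate of a uniform point on $\bS^{k-1}$. For $k\geq 2$ its density is
\[
c_k(1-t^2)^{(k-3)/2}, \qquad t\in[-1,1],
\]
and hence $\Theta=\arccos T$ has density proportional to $\sin^{k-2}\theta$ on $[0,\pi]$. This density is symmetric about $\pi/2$, which gives $\E\Theta=\pi/2$ at once.

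For the variance I will work with $\Phi:=\Theta-\pi/2=-\arcsin T$. We have $\E\Phi=0$, and $\Phi$ has density proportional to $\cos^{k-2}\varphi$ on $[-\pi/2,\pi/2]$. Therefore
\[
\Var\Theta=\E\Phi^2=\frac{\int_{-\pi/2}^{\pi/2}\varphi^2\cos^{a}\varphi\,\dd\varphi}{\int_{-\pi/2}^{\pi/2}\cos^{a}\varphi\,\dd\varphi},\qquad a:=k-2\ge 0.
\]
To evaluate this ratio I will use the classical formula
\[
I(a,b):=\int_{-\pi/2}^{\pi/2}\cos^{a}\varphi\, \eee^{b\varphi}\,\dd\varphi
=\frac{\pi\,\Gamma(a+1)}{2^{a}\,\Gamma\!\left(1+\frac{a}{2}+\frac{ib}{2}\right)\Gamma\!\left(1+\frac{a}{2}-\frac{ib}{2}\right)}
\]
(with $b$ real, via analytic continuation in $b$ from the imaginary axis; equivalently, this is the Fourier integral $\int\cos^a\varphi\,\eee^{i\beta\varphi}\dd\varphi$ from Gradshteyn--Ryzhik 3.631.9). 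Differentiating $\log I(a,b)$ twice in $b$ at $b=0$ gives the second moment of $\Phi$, since $\Phi$ is centered. With $x:=1+a/2=k/2$ we have
\[
\log I=\text{const}-\log\Gamma\!\left(x+\tfrac{ib}{2}\right)-\log\Gamma\!\left(x-\tfrac{ib}{2}\right),
\]
and
\[
\frac{\partial^2}{\partial b^2}\Big|_{b=0}\,\log I=-2\cdot\Big(\tfrac{i}{2}\Big)^2\psi_1(x)=\tfrac12\psi_1\!\left(\tfrac k2\right).
\]
This is exactly the claim.

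The main obstacle is justifying the Gamma-product formula for $I(a,b)$ with real exponent $b$. If I want to avoid citing it, I would prove it through a recursion instead. Integration by parts twice gives
\[
I(a,b)=\frac{a(a-1)}{a^2+b^2}\,I(a-2,b),
\]
and the closed form satisfies the same recursion. Checking the base cases $a=0$ and $a=1$ directly then completes the argument: for $a=0$ the formula reduces to $\sinh(\pi b/2)/(b/2)=\pi/\bigl(\Gamma(1+ib/2)\Gamma(1-ib/2)\bigr)$ by the reflection formula, and $a=1$ is an elementary integral.

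As a fallback, I could compute $\E\Phi^2$ directly from the same recursion for second moments. The relation
\[
\E_a\Phi^2=\E_{a-2}\Phi^2-\frac{2}{a^2}
\]
follows from the recursion for $I$ by expanding to second order in $b$. It is matched by the identity $\psi_1(x)-\psi_1(x-1)=-1/(x-1)^2$, once the base cases $k=2$ (uniform $\Phi$, giving variance $\pi^2/12=\tfrac12\psi_1(1)$) and $k=3$ (giving $\pi^2/4-2=\tfrac12\psi_1(3/2)$) are verified.
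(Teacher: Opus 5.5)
Your proposal is correct and follows essentially the paper's route. Both arguments use the density $\propto\sin^{k-2}\theta$ and its symmetry for the mean, shift to $\varphi=\theta-\pi/2$, and differentiate twice the logarithm of the Gamma-product formula of Gradshteyn--Ryzhik 3.631.9 at the origin; you use the weight $\eee^{b\varphi}$ with real $b$ where the paper uses $\cos(ax)$, which is the same computation along the imaginary axis. Your recursion $I(a,b)=\frac{a(a-1)}{a^2+b^2}I(a-2,b)$ with the base cases $a=0,1$ is additionally a valid self-contained proof of that formula for integer $a=k-2$, which the paper simply cites.
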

\begin{proof}
%The four sign choices $(\pm V_1,\pm V_2)$ produce four cones $\pos (\pm V_1, \pm V_2)$ that partition the full plane $\lin(V_1,V_2)$ (a.s.\ $2$-dimensional for $k\ge2$), hence the angles of these cones  sum to $2\pi$.  By symmetry, these four angles are identically distributed, so $4\,\E \Theta=2\pi$ and therefore $\E \Theta=\pi/2$.
It is a standard fact, see, e.g., \cite[Example~2.28]{kabluchko_steigenberger_thaele_boob_beta_type}, that $\Theta$ has density
\begin{equation}\label{eq:densitytheta}
f_\Theta(\theta)=\frac{(\sin\theta)^{k-2}}{Z_k},\qquad \theta\in [0,\pi], \qquad
Z_k= \int_0^\pi (\sin u)^{k-2}\,\dint u
= \frac {\sqrt{\pi}\,\Gamma\!\left(\frac{k-1}{2}\right)}{\Gamma\!\left(\frac{k}{2}\right)},
\end{equation}
for $k\geq 2$. It follows that $\Theta$ has the same law as $\pi-\Theta$,  hence $\E \Theta=\pi/2$. Next, it follows that
\begin{equation}\label{eq:Var_Theta_as_quotient_integrals}
\Var(\Theta)=\E\!\left[\left(\Theta-\frac \pi2\right)^2\right]
=\frac{\int_0^\pi \left(\theta-\frac{\pi}{2}\right)^2 (\sin\theta)^{k-2}\,\dint \theta}
{\int_0^\pi (\sin\theta)^{k-2}\,\dint \theta}
=\frac{\int_{-\pi/2}^{\pi/2} x^2 \cos^{k-2}x\,\dint x}
{\int_{-\pi/2}^{\pi/2} \cos^{k-2}x\,\dint x},
\end{equation}
where we used the substitution $x=\theta-\pi/2$. It remains to show that the quotient on the right-hand side equals $\frac12\psi_1(k/2)$.
We use the identity (see~\cite[Eq.~3.631.9]{GradshteynRyzhik2007})
\begin{equation}\label{eq:identity_gradshteyn_ryzhik}
F(a):=\int_{-\pi/2}^{\pi/2}\cos^{k-2}x\,\cos(ax)\,\dint x
=\frac{2\, \pi\,\Gamma(k-1)}{2^{k-1}\,\Gamma\!\big(\frac k2 +\tfrac a2\big)\,
\Gamma\!\big(\frac k2-\tfrac a2\big)},
\qquad k>1, \quad a\in \C.
\end{equation}
On the one hand,  the quotient in \eqref{eq:Var_Theta_as_quotient_integrals} equals $-F''(0)/F(0)$.
%Since $F''(0)=\int_{-\pi/2}^{\pi/2}\cos^{k-2}x\,\frac{d^2}{da^2}\cos(ax)\big|_{a=0}\,dx =-\int_{-\pi/2}^{\pi/2}x^2\cos^{k-2}x\,dx$ and $F(0)=\int_{-\pi/2}^{\pi/2}\cos^{k-2}x\,dx$,
On the other hand, taking logarithms in \eqref{eq:identity_gradshteyn_ryzhik}, differentiating twice, and setting $a=0$ gives
$(\log F)'(0)=0$ and $(\log F)''(0)=-\frac12\psi_1(k/2)$. Hence, $-F''(0)/F(0) = -(\log F)''(0) =\frac12\psi_1(k/2)$.
\end{proof}

\begin{corollary}[Formula for $\zeta_2$]\label{cor:u_stat_zeta_2_formula}
For $k\geq 2$ we have
$$
\zeta_2 =  \frac 2 {2^{2m-2} \pi^2}  \binom {m-2}{k-1}^2 \cdot  \psi_1 \left(\frac k2\right) > 0.
$$
\end{corollary}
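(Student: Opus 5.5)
The plan is to compute $\zeta_2=\E[\widetilde g_2(V_1,V_2)^2]$ directly from the explicit kernel in Lemma~\ref{lem:u_stat_proj_kernel_c_2}. Since $V_1\neq\pm V_2$ almost surely, that lemma applies a.s. With $\Theta=\arccos\langle V_1,V_2\rangle$ it gives
\[
\widetilde g_2(V_1,V_2)=\frac{1}{2^{m-1}}\binom{m-2}{k-1}\cdot\frac{2}{\pi}\Bigl(\Theta-\frac{\pi}{2}\Bigr)\qquad\text{a.s.}
\]
We know $\E\,\widetilde g_2(V_1,V_2)=0$ by construction; this is consistent with $\E\Theta=\pi/2$ from Lemma~\ref{lem:angle_between_two_vectors}. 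Hence
\[
\zeta_2=\frac{1}{2^{2m-2}}\binom{m-2}{k-1}^2\cdot\frac{4}{\pi^2}\,\E\Bigl[\Bigl(\Theta-\frac{\pi}{2}\Bigr)^2\Bigr]=\frac{4}{2^{2m-2}\pi^2}\binom{m-2}{k-1}^2\Var\Theta .
\]

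Next, substitute $\Var\Theta=\frac12\psi_1(k/2)$ from Lemma~\ref{lem:angle_between_two_vectors}. The factor $4\cdot\frac12=2$ then yields exactly the stated formula
\[
\zeta_2=\frac{2}{2^{2m-2}\pi^2}\binom{m-2}{k-1}^2\psi_1\!\left(\tfrac k2\right).
\]

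For strict positivity, two facts suffice. First, $\psi_1(k/2)=\sum_{j\ge0}(j+k/2)^{-2}>0$. Second, $\binom{m-2}{k-1}\neq 0$: since $m=k+q$ with $q\ge1$, we have $m-2=k+q-2\ge k-1$.

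There is no real obstacle here. The only points to check are that the a.s. exclusion $V_1=\pm V_2$ is harmless, and that the arcsin and arccos forms of the kernel agree, via $\arcsin t=\frac{\pi}{2}-\arccos t$.
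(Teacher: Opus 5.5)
Your proof is correct and follows the paper's argument: you substitute the explicit kernel from Lemma~\ref{lem:u_stat_proj_kernel_c_2} into $\zeta_2=\Var\widetilde g_2(V_1,V_2)$ and then use $\Var\Theta=\frac12\psi_1(k/2)$ from Lemma~\ref{lem:angle_between_two_vectors}. You also check that $\binom{m-2}{k-1}\neq 0$ because $q\ge 1$, which makes explicit the strict positivity that the paper leaves implicit.
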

\begin{proof}
Recall that $\zeta_2=\Var g_2(V_1,V_2) = \Var \widetilde g_2(V_1,V_2)$. Hence
$$
\zeta_2 = \Var \widetilde g_2(V_1,V_2) = \frac 1 {2^{2m-2}} \binom {m-2}{k-1}^2 \cdot \frac 4 {\pi^2} \Var \arccos \langle V_1,V_2\rangle
= \frac 2 {2^{2m-2} \pi^2}  \binom {m-2}{k-1}^2 \cdot  \psi_1 \left(\frac k2\right),
$$
using first Lemma~\ref{lem:u_stat_proj_kernel_c_2} and then Lemma~\ref{lem:angle_between_two_vectors}.
\end{proof}
We are now able to prove the asymptotic formula for the variance stated in~\eqref{eq:var_face_count_asymptotics}.
%, which we restate for convenience.

\begin{proposition}[Asymptotics of the variance]\label{prop:variance_face_count}
Let $k\geq 2$ and $q \in \N$ be fixed. Then
$$
\Var \left(\frac{f_{d-q}(\cW_{d+k,d})}{\binom {d+k}{d-q}}\right) \sim \frac{\binom{k+q}{2}^2 \binom {k+q-2}{k-1}^2 \psi_1 \left(\frac k2\right)}{2^{2k+2q-4} \pi^2 \cdot d^2},
\qquad
d\to\infty.
$$
\end{proposition}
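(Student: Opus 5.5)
The plan is to combine the $U$-statistic representation~\eqref{eq:f_vect_as_U_stats_theorem_weak_conv} with the variance asymptotics~\eqref{eq:U_stat_variance_asymptotic}. By~\eqref{eq:f_vect_as_U_stats_theorem_weak_conv}, the quantity $f_{d-q}(\cW_{d+k,d})/\binom{d+k}{d-q}$ is exactly the $U$-statistic $\mathbb U_n$. Here the sample size is $n=d+k$, the order is $m=k+q$, and the kernel is $h(v_1,\ldots,v_m)=\ind_{\{\pos(v_1,\ldots,v_m)=\R^k\}}$. This kernel is bounded, so $\E h^2<\infty$ and the standing assumptions of Section~\ref{subsec:U_stat_information} hold.

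Lemma~\ref{eq:u_stat_zeta_1_h_1} gives $\zeta_1=0$, and Corollary~\ref{cor:u_stat_zeta_2_formula} gives $\zeta_2>0$. Hence the kernel is degenerate of order $2$, and~\eqref{eq:U_stat_variance_asymptotic} with $c=2$ yields
\[
\Var \mathbb U_n \sim \frac{\binom{m}{2}^2\, 2!}{n^2}\,\zeta_2, \qquad n\to\infty .
\]
Since $m$ is fixed, this asymptotic is legitimate. One can also check it directly from the exact formula~\eqref{eq:U_stat_variance_exact}: for each $c$, the coefficient $\binom{m}{c}\binom{n-m}{m-c}/\binom{n}{m}$ is of order $n^{-c}$. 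The $\zeta_c$ are bounded by $1$ and do not depend on $n$, because they depend only on $m$ and $k$. So the $c=2$ term dominates and the remaining terms are $O(n^{-3})$.

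It remains to substitute and simplify. Since $n=d+k$ with $k$ fixed, $n^2\sim d^2$. Inserting $\zeta_2=\frac{2}{2^{2m-2}\pi^2}\binom{m-2}{k-1}^2\psi_1(k/2)$ gives
\[
\Var\mathbb U_n\sim \frac{2\binom{m}{2}^2}{d^2}\cdot\frac{2}{2^{2m-2}\pi^2}\binom{m-2}{k-1}^2\psi_1\!\left(\tfrac k2\right)
=\frac{\binom{k+q}{2}^2\binom{k+q-2}{k-1}^2\psi_1\!\left(\frac k2\right)}{2^{2k+2q-4}\pi^2 d^2},
\]
using $4/2^{2m-2}=2^{-(2m-4)}$ and $m=k+q$. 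This is the claimed formula.

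There is no real obstacle, since all ingredients are already established. The only point needing care is that~\eqref{eq:U_stat_variance_asymptotic} is applied with $m$ and $h$ fixed and only $n$ varying, which is exactly our regime. One must also note that $\zeta_2$ does not depend on $d$, and this holds because it depends only on $m$ and $k$.
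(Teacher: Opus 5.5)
Your proposal is correct and matches the paper's proof essentially step for step. You identify the normalized face count with the $U$-statistic $\mathbb U_n$, and you use $\zeta_1=0$ and $\zeta_2>0$ to get degeneracy of order $2$. You then apply~\eqref{eq:U_stat_variance_asymptotic} with $c=2$ and substitute $\zeta_2$ and $n=d+k\sim d$. Your extra check via~\eqref{eq:U_stat_variance_exact} is also correct: each coefficient is of order $n^{-c}$, and $\zeta_c\le 1$ depends only on $m$ and $k$.
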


\begin{proof}
We have shown that $\zeta_1= 0$ and $\zeta_2>0$.
%$$
%\zeta_2 = \Var \widetilde g_2(V_1,V_2) = \frac 1 {2^{2m-2}} \binom {m-2}{k-1}^2 \cdot \frac 4 {\pi^2} \Var \arccos \langle V_1,V_2\rangle
%= \frac 2 {2^{2m-2} \pi^2}  \binom {m-2}{k-1}^2 \cdot  \psi_1 \left(\frac k2\right).
%$$
Equation~\eqref{eq:U_stat_variance_asymptotic} with $c=2$ gives
$$
\Var \left(\frac{f_{d-q}(\cW_{d+k,d})}{\binom {d+k}{d-q}}\right) = \Var  \mathbb U_n
\sim
\frac{\binom{m}{2}^2\,2!}{n^2}\;\zeta_2
=
\frac{\binom{m}{2}^2 \binom {m-2}{k-1}^2 \psi_1 \left(\frac k2\right)}{2^{2m-4} \pi^2 \cdot n^2}
,
\qquad n\to\infty.
$$
Recalling that $m= k+q$ is fixed  and $n = d+k\sim d$ gives the stated asymptotic equivalence.
\end{proof}

\subsection{Proof of Theorem~\ref{theo:distributional_limit_face_count_random_cone}: Convolution operator on the sphere and its eigenvalues} \label{subsec:proof_theo_f_vect_U_stat_part_2}
Let $k\geq 2$ be integer and recall that $\nu_k$ is the uniform probability measure on the unit sphere  $\mathbb S^{k-1}=\{v\in\mathbb R^k:\|v\|=1\}$.
Define the integral  operator $A: L^{2}(\mathbb S^{k-1},\nu_k)\to L^{2}(\mathbb S^{k-1},\nu_k)$
by
\begin{equation}\label{eq:A_oper_def}
(Af)(v) =\int_{\mathbb S^{k-1}} \frac{2}{\pi}\arcsin\!\big(\langle v,u\rangle\big) \,f(u)\,\nu_k(\dint u), \qquad f\in L^2(\mathbb S^{k-1},\nu_k), \quad v\in\mathbb S^{k-1}.
\end{equation}
Note that $A$ is Hilbert--Schmidt (hence compact) since its kernel $K(u,v) = \frac{2}{\pi}\arcsin (\langle v,u\rangle)$ is square integrable w.r.t.\ $\nu_k \otimes \nu_k$. Also, $A$ is self-adjoint  since $K$ is real-valued and symmetric in its arguments. By the spectral theorem for compact operators, $A$ has discrete spectrum consisting of countably many eigenvalues.

Our aim is to describe the full spectral decomposition of $A$ (i.e.\ its eigenvalues and the corresponding eigenspaces).  %Since $A$ is a zonal operator (meaning that the kernel $K(u,v)$ is a function of  the scalar product of $u$ and $v$), it is well known that  acts as a multiplication by constant on the space of degree $n$ spherical harmonics.
This will be done in terms of spherical harmonics. Let us recall some basic facts from the harmonic analysis on the sphere; see~\cite[Chapters~1,2]{DaiXu2013}, \cite[Chapter~5]{AxlerEtalHamonicFunctionTheory}, \cite{Muller1966SphericalHarmonics}.
%The unit sphere $\bS^{k-1}\subseteq \R^k$ with $k\geq 2$ is endowed with the uniform distribution $\nu_k$.
For $n\in \N_0$ let $\mathcal H_n\subseteq L^2 (\bS^{k-1}, \nu_k)$ be the space of spherical harmonics of degree $n$.
One possible definition of $\mathcal H_n$ is this one:
\[
\mathcal H_n=\Big\{Y\in C^\infty(\mathbb S^{k-1}) : \Delta_{\mathbb S^{k-1}}Y=-n(n+k-2)\,Y\Big\},
\]
where $\Delta_{\mathbb S^{k-1}}$ denotes the Laplace--Beltrami operator on $\mathbb S^{k-1}$.  It is known~\cite[Proposition~5.8 on p.~78]{AxlerEtalHamonicFunctionTheory} that the linear space $\mathcal H_n$
is finite-dimensional with
\[
d_{n,k} := \dim \mathcal H_n
= \binom{n+k-1}{n}-\binom{n+k-3}{n-2}.
\]
%Equivalently, for $n\ge 1$,
%\[
%\dim(\mathcal H_n)=\frac{(2n+k-2)(n+k-3)!}{n!\,(k-2)!},
%\qquad\text{and}\qquad
%\dim(\mathcal H_0)=1.
%\]
%Moreover, with $\nu_k$ the uniform probability measure on $\mathbb S^{k-1}$,
%\[
%L^2(\mathbb S^{k-1},\nu_k)
%\;=\;
%\widehat{\bigoplus_{n=0}^{\infty}}\,\mathcal H_n,
%\]
Moreover,  $\mathcal H_m\perp \mathcal H_n$ for $m\neq n$, and the linear span of $\bigcup_{n\ge 0}\mathcal H_n$ is dense in
$L^2(\mathbb S^{k-1},\nu_k)$; see~\cite[Theorem 2.2.2]{DaiXu2013}. Note that  the excluded case $k=1$ is different since then the Hilbert space $L^2(\mathbb S^{k-1},\nu_k)$ becomes $2$-dimensional --- see Section~\ref{subsec:proof_theo_f_vect_U_stat_part_4_k_equals_1} for this case.

\begin{lemma}[Spectral decomposition of $A$]\label{lem:spectral_decomp_A}
For every $n\in \N_0$ and every degree $n$ spherical harmonic  $Y\in\mathcal H_n$, we have $A Y=\lambda_n \, Y$,  where
\[
\lambda_n = 0 \quad (\text{if $n$ is even}),
\qquad
\qquad
\lambda_n
=
\left(\frac{\Gamma\!\left(\frac{k}{2}\right)\Gamma\!\left(\frac{n}{2}\right)}
{\pi\,\Gamma\!\left(\frac{n+k}{2}\right)}\right)^2 = \frac {\left(B\left(\frac n2, \frac k2\right)\right)^2} {\pi^2}
\quad (\text{if $n$ is odd}).
\]
\end{lemma}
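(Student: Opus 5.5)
The operator $A$ is zonal: its kernel depends only on $\langle u,v\rangle$. I therefore plan to use the Funk--Hecke formula. For a bounded measurable $\phi:[-1,1]\to\R$ and $Y\in\mathcal H_n$, it states that
\[
\int_{\bS^{k-1}}\phi(\langle v,u\rangle)Y(u)\,\nu_k(\dint u)=\lambda_n Y(v),
\qquad
\lambda_n=\frac{\int_{-1}^1\phi(t)\,\frac{C_n^{(k-2)/2}(t)}{C_n^{(k-2)/2}(1)}\,(1-t^2)^{\frac{k-3}{2}}\dint t}{\int_{-1}^1(1-t^2)^{\frac{k-3}{2}}\dint t},
\]
with Gegenbauer polynomials $C_n^{(k-2)/2}$; for $k=2$ one uses Chebyshev polynomials $T_n$ instead. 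I will apply this with $\phi(t)=\frac2\pi\arcsin t$. This already shows that every $Y\in\mathcal H_n$ is an eigenfunction. Since $\bigcup_n\mathcal H_n$ spans a dense subspace, these $\lambda_n$ also exhaust the spectrum.

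Parity takes care of even $n$. Here $\phi$ is odd while $C_n$ has the parity of $n$, so $\lambda_n=0$ for even $n$.

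For odd $n$ I substitute $t=\sin x$ with $x\in[-\pi/2,\pi/2]$. This turns $\phi$ into $\frac{2}{\pi}x$ and the weight into $\cos^{k-2}x\,\dint x$, so
\[
\lambda_n=\frac{2}{\pi}\,\frac{\int_{-\pi/2}^{\pi/2}x\,\frac{C_n(\sin x)}{C_n(1)}\cos^{k-2}x\,\dint x}{\int_{-\pi/2}^{\pi/2}\cos^{k-2}x\,\dint x}.
\]
The denominator is $Z_k=\sqrt\pi\,\Gamma(\frac{k-1}2)/\Gamma(\frac k2)$, as computed in Lemma~\ref{lem:angle_between_two_vectors}.

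For the numerator I would integrate by parts against Rodrigues' formula,
\[
C_n(t)(1-t^2)^{\frac{k-3}{2}}=c_{n,k}\,\frac{\dint^n}{\dint t^n}(1-t^2)^{n+\frac{k-3}{2}}.
\]
Working in the variable $t$, $n$ integrations by parts move all derivatives onto $\arcsin t$, and the boundary terms vanish. This requires $\arcsin^{(j)}(t)$ times $\frac{\dint^{n-1-j}}{\dint t^{n-1-j}}(1-t^2)^{n+(k-3)/2}$ to vanish at $\pm1$. The second factor vanishes to order $j+1+\frac{k-3}{2}$ while $\arcsin^{(j)}$ blows up like $(1-t)^{1/2-j}$, so each product is $O((1-t)^{k/2-1})$. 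Every term vanishes for $k>2$, and for $k=2$ this needs a separate check or the Chebyshev case.

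That gives $\int_{-1}^1\arcsin^{(n)}(t)(1-t^2)^{n+\frac{k-3}2}\dint t$, up to the constant $c_{n,k}$ and the sign $(-1)^n$. Since $\arcsin'(t)=(1-t^2)^{-1/2}$, the derivative $\arcsin^{(n)}$ is a polynomial times $(1-t^2)^{-(2n-1)/2}$, which is messy. A cleaner alternative is the power series $\arcsin t=\sum_j\frac{(2j)!}{4^j(j!)^2(2j+1)}t^{2j+1}$ together with the known moments $\int t^{2j+1}C_n(t)(1-t^2)^{(k-3)/2}\dint t$. These moments vanish for $2j+1<n$ and are explicit Gamma-ratios otherwise, so the result is a ${}_3F_2$-type sum at argument $1$ to be evaluated by Gauss/Saalschütz.

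The route I would actually try first is different again. Expand $\frac2\pi\arcsin\langle u,v\rangle$ via the Gaussian representation
\[
\frac2\pi\arcsin\langle u,v\rangle=\E\bigl[\operatorname{sgn}\langle G,u\rangle\operatorname{sgn}\langle G,v\rangle\bigr]
\]
with $G$ standard Gaussian, which is Grothendieck's identity. This gives $A=SS^*$, where $S$ is the operator with kernel $\operatorname{sgn}\langle w,u\rangle$ on the sphere. By Funk--Hecke, $S$ acts on $\mathcal H_n$ as the scalar
\[
\mu_n=\frac{\int_{-1}^1\operatorname{sgn}(t)\,\frac{C_n(t)}{C_n(1)}(1-t^2)^{\frac{k-3}2}\dint t}{\int_{-1}^1(1-t^2)^{\frac{k-3}2}\dint t},
\]
so $\lambda_n=\mu_n^2$, which explains the square in the claimed formula. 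The integral $\mu_n$ is the classical hemisphere integral of a Gegenbauer polynomial. Using Rodrigues once, it reduces to a single boundary term at $t=0$, namely a derivative of $(1-t^2)^{n+(k-3)/2}$ evaluated at $0$. That value is an explicit Gamma ratio, and after dividing by $C_n(1)$ and $Z_k$ it should simplify to
\[
\pm\,\frac{\Gamma(\frac k2)\Gamma(\frac n2)}{\pi\,\Gamma(\frac{n+k}2)}.
\]

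The main obstacle is this Gamma-function bookkeeping for $\mu_n$, namely matching the Rodrigues constant, $C_n(1)$ and $Z_k$ to the stated closed form. I would sanity-check the result at $n=1$, where $\mu_1=\E|\langle V,e_1\rangle|/\E\langle V,e_1\rangle^2$ gives $\mu_1=\Gamma(\frac k2)^2/(\sqrt\pi\,\Gamma(\frac{k+1}2))\cdot\frac{1}{\sqrt{\pi}}$. I would also check $k=2$, where Fourier series give $\mu_n=\frac{4}{\pi n}\cdot\frac12$ for odd $n$, that is $\lambda_n=\frac{4}{\pi^2n^2}$. This matches $B(\frac n2,1)^2/\pi^2=4/(\pi^2 n^2)$.
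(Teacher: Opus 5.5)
Your preferred route (writing $A=SS^*$ via the sign identity, diagonalizing $S$ on $\mathcal H_n$ by Funk--Hecke with $h=\operatorname{sign}$, and using Rodrigues to reduce the integral to the single boundary term $-2\kappa_n F_n^{(n-1)}(0)$ with $F_n(t)=(1-t^2)^{n+\frac{k-3}{2}}$) is exactly the paper's proof, so the approach is correct. The Gamma bookkeeping you left open is finished by reading off the Taylor coefficient of $t^{n-1}$ in $F_n$ and applying Legendre's duplication formula; note, however, that your $n=1$ sanity check is misstated: $\mu_1=\E|\langle V,e_1\rangle|=\Gamma(\frac k2)/(\sqrt\pi\,\Gamma(\frac{k+1}{2}))$ with no division by $\E\langle V,e_1\rangle^2$, and this matches the target, whereas your value $\Gamma(\frac k2)^2/(\pi\,\Gamma(\frac{k+1}{2}))$ does not (for $k=3$ it gives $1/4$ instead of $1/2$).
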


To prove Lemma~\ref{lem:spectral_decomp_A}, we represent $A$ as the square of another operator.
\begin{lemma}[Square root of $A$]
We have $A=S^{2}$, where $S: L^{2}(\mathbb S^{k-1},\nu_k)\to L^{2}(\mathbb S^{k-1},\nu_k)$ is a linear operator defined by
$$
(Sf)(v) = \int_{\mathbb S^{k-1}} \operatorname{sign}\!\big(\langle v,u\rangle\big)\,f(u)\,\nu_k(\dint u),
\qquad f\in L^2(\mathbb S^{k-1},\nu_k), \quad v\in\mathbb S^{k-1}.
$$
\end{lemma}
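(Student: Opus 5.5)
Since $S$ has a bounded symmetric kernel, $S^2$ is the integral operator with kernel
\[
K_2(v,w)=\int_{\mathbb S^{k-1}} \operatorname{sign}(\langle v,u\rangle)\,\operatorname{sign}(\langle u,w\rangle)\,\nu_k(\dint u),
\]
by Fubini, which is justified because the kernel is bounded on a probability space. So it suffices to show that
\[
K_2(v,w)=\tfrac{2}{\pi}\arcsin\langle v,w\rangle
\]
for all $v,w\in\mathbb S^{k-1}$, which is the kernel $K$ of $A$ in \eqref{eq:A_oper_def}.

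To evaluate $K_2$, write $U\sim\nu_k$. Up to a null set, $\operatorname{sign}(\langle v,U\rangle)\operatorname{sign}(\langle w,U\rangle)$ equals $+1$ when the hyperplane $U^\perp$ does not separate $v$ and $w$, and $-1$ when it does. Hence
\[
K_2(v,w)=1-2\,\P[U^\perp \text{ separates } v,w].
\]
Now project $U$ onto the plane $\lin(v,w)$. When $k\ge 3$ the direction of this projection is uniform on the unit circle of that plane, by rotation invariance. Separation occurs exactly when the line orthogonal to the projected direction passes between $v$ and $w$. This happens with probability $\theta/\pi$, where $\theta=\arccos\langle v,w\rangle$; this is the classical Goemans--Williamson/Grothendieck identity. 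For $k=2$ the same computation holds directly on the circle.

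Therefore
\[
K_2(v,w)=1-\tfrac{2\theta}{\pi}=\tfrac{2}{\pi}\bigl(\tfrac{\pi}{2}-\arccos\langle v,w\rangle\bigr)=\tfrac{2}{\pi}\arcsin\langle v,w\rangle .
\]
If $v=\pm w$ the formula is immediate (the kernel is $\pm1$), and in any case a single pair is irrelevant in $L^2$. This gives $A=S^2$.

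The main step needing care is the reduction of the separation probability to the planar case, namely that the projected direction is uniform. One can avoid it by realising $U=G/\|G\|$ with $G$ a standard Gaussian vector in $\R^k$. Then $(\langle v,G\rangle,\langle w,G\rangle)$ is a centered bivariate Gaussian with correlation $\rho=\langle v,w\rangle$. The quadrant probability formula
\[
\P[\langle v,G\rangle>0,\ \langle w,G\rangle>0]=\tfrac14+\tfrac{1}{2\pi}\arcsin\rho
\]
then gives $\E[\operatorname{sign}\cdot\operatorname{sign}]=\tfrac{2}{\pi}\arcsin\rho$ directly. This Gaussian route is the one I would write up.
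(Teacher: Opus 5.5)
Your proof is correct and follows the paper's argument. The paper likewise writes $S^2$ as the integral operator with kernel $\int_{\mathbb S^{k-1}}\operatorname{sign}(\langle v,u\rangle)\operatorname{sign}(\langle u,w\rangle)\,\nu_k(\dint u)$ and identifies this kernel with $\frac{2}{\pi}\arcsin\langle v,w\rangle$ via the Goemans--Williamson identity, which it cites rather than proves; your Gaussian quadrant computation simply supplies that cited step.
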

\begin{proof}
Let $U$ be uniformly distributed on $\bS^{k-1}$. For all $v,w\in \bS^{k-1}$ it is easy to check  (see, e.g., \cite[Lemma 6.7 on p. 144]{WilliamsonShmoys2011Design}) that
$$
\E\!\left[\operatorname{sign}\!\big(\langle v,U\rangle\big)\operatorname{sign}\!\big(\langle U,w\rangle\big)\right]
=\frac{2}{\pi}\arcsin(\langle v,w\rangle).
$$
Applying the definition of $S$ twice gives
\[
(S^2 f)(v)
=\int_{\mathbb S^{k-1}}\!\!\left(\int_{\mathbb S^{k-1}}
\operatorname{sign}\!\big(\langle v,u\rangle\big)\operatorname{sign}\!\big(\langle u,w\rangle\big)\,\nu_k(\dint u)\right)
f(w)\,\nu_k(\dint w).
\]
Thus $S^2$ is an integral operator with kernel
\[
L(v,w):=\int_{\mathbb S^{k-1}}
\operatorname{sign}\!\big(\langle v,u\rangle\big)\operatorname{sign}\!\big(\langle u,w\rangle\big)\,\nu_k(\dint u) = \E\!\left[\operatorname{sign}\!\big(\langle v,U\rangle\big)\operatorname{sign}\!\big(\langle U,w\rangle\big)\right] = \frac{2}{\pi}\arcsin(\langle v,w\rangle).
\]
We conclude that  $A=S^2$.
\end{proof}

%--- Spherical harmonics and Funk--Hecke formula on S^{k-1} ---

The next lemma describes the spectral decomposition of $S$ and, together with the formula $A=S^2$, implies Lemma~\ref{lem:spectral_decomp_A}.

\begin{lemma}[Spectral decomposition of $S$]\label{lem:spectral_decomp_S}
For every $n\in \N_0$ and every $Y\in\mathcal H_n$, we have $SY=\tau_n\,Y$  (i.e.\ the restriction of $S$ to $\mathcal H_n$ is the multiplication by $\tau_n$), where
\[
\tau_n = 0 \quad (\text{if $n$ is even}),
\qquad
\qquad
\tau_n
=
(-1)^{\frac{n-1}{2}}\,
\frac{\Gamma\!\left(\frac{k}{2}\right)\Gamma\!\left(\frac{n}{2}\right)}
{\pi\,\Gamma\!\left(\frac{n+k}{2}\right)},
\quad (\text{if $n$ is odd}).
\]
\end{lemma}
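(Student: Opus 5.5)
The operator $S$ has the zonal kernel $\operatorname{sign}(\langle v,u\rangle)$, which depends only on the inner product. I would therefore apply the Funk--Hecke formula on $\bS^{k-1}$ (see \cite[Theorem~1.2.9]{DaiXu2013}). It states that for a bounded measurable $\phi:[-1,1]\to\R$ and every $Y\in\mathcal H_n$,
\[
\int_{\bS^{k-1}} \phi(\langle v,u\rangle)\,Y(u)\,\nu_k(\dint u) = \tau_n\, Y(v),
\qquad
\tau_n=\frac{1}{c_k}\int_{-1}^{1}\phi(t)\,\frac{C_n^{(\frac{k-2}{2})}(t)}{C_n^{(\frac{k-2}{2})}(1)}\,(1-t^2)^{\frac{k-3}{2}}\,\dint t,
\]
where $c_k=\int_{-1}^1(1-t^2)^{\frac{k-3}{2}}\dint t = B(\tfrac12,\tfrac{k-1}{2})$ and $C_n^{(\lambda)}$ are Gegenbauer polynomials. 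For $k=2$ one uses Chebyshev polynomials $T_n$ instead, or checks directly on $e^{in\theta}$. Applied with $\phi=\operatorname{sign}$, this immediately shows that $S$ acts on $\mathcal H_n$ as multiplication by a scalar $\tau_n$. It remains to compute $\tau_n$.

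The even case is easy. The Gegenbauer polynomial $C_n$ has parity $(-1)^n$, while $\operatorname{sign}$ is odd, so the integrand is odd and $\tau_n=0$ for even $n$. Equivalently, even-degree harmonics are even functions and $S$ maps even functions to zero.

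For odd $n$, I would avoid Gegenbauer integrals and use the Rodrigues formula,
\[
(1-t^2)^{\lambda-\frac12}\frac{C_n^{(\lambda)}(t)}{C_n^{(\lambda)}(1)} = \frac{(-1)^n}{2^n(\lambda+\frac12)_n}\frac{\dint^n}{\dint t^n}(1-t^2)^{n+\lambda-\frac12},
\qquad \lambda=\tfrac{k-2}{2}.
\]
Then $\tau_n = \frac{2}{c_k}\int_0^1(\ldots)\,\dint t$. Integrating the $n$-th derivative once over $[0,1]$ gives $\frac{2}{c_k}\cdot\frac{(-1)^n}{2^n(\lambda+\frac12)_n}\bigl[-D^{n-1}(1-t^2)^{n+\lambda-\frac12}\bigr]_{t=0}$, because the boundary term at $t=1$ vanishes. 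The value at $t=0$ of the $(n-1)$-th derivative (an even order) follows from the binomial series: the coefficient of $t^{n-1}$ in $(1-t^2)^{\mu}$ with $\mu=n+\frac{k-3}{2}$ is $(-1)^{\frac{n-1}{2}}\binom{\mu}{\frac{n-1}{2}}$, and this gets multiplied by $(n-1)!$.

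The final step is to simplify $(\lambda+\frac12)_n=\Gamma(n+\frac{k-1}{2})/\Gamma(\frac{k-1}{2})$, $c_k=\sqrt\pi\,\Gamma(\frac{k-1}{2})/\Gamma(\frac k2)$ and $\binom{\mu}{(n-1)/2}$ with the duplication formula $(n-1)!=2^{n-1}\Gamma(\frac n2)\Gamma(\frac{n+1}{2})/\sqrt\pi$. After cancellation the result should be $(-1)^{\frac{n-1}{2}}\Gamma(\frac k2)\Gamma(\frac n2)/(\pi\Gamma(\frac{n+k}{2}))$. Two sanity checks: $n=1$ gives $\tau_1=\Gamma(\frac k2)/(\sqrt\pi\,\Gamma(\frac{k+1}{2}))=\E|\langle e_1,U\rangle|$, as it must since $S(\langle\cdot,w\rangle)(v)=\E[\operatorname{sign}\langle v,U\rangle\langle U,w\rangle]$; and for $k=2$ the formula should give $\tau_n=\pm\frac{2}{\pi n}$, the Fourier coefficients of the square wave. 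The gamma-function bookkeeping is the main obstacle, and I would guard against sign and constant slips with these two checks. As a backup, the $k=2$ case can be done directly via $\frac1{2\pi}\int\operatorname{sign}(\cos\theta)e^{in\theta}\dint\theta$.

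Lemma~\ref{lem:spectral_decomp_A} then follows from $A=S^2$, which gives $\lambda_n=\tau_n^2$.
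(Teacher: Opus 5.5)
Your proposal is correct and follows essentially the same route as the paper. Both use the Funk--Hecke formula with $h=\operatorname{sign}$ and the Rodrigues formula, integrate the $n$-th derivative once with vanishing boundary terms at $t=\pm1$, extract $F_n^{(n-1)}(0)$ from the binomial series, and finish with Legendre's duplication formula; your parity argument for even $n$ is equivalent to the paper's observation that $F_n^{(n-1)}(0)=0$ for even $n$. Your normalized form of Rodrigues' formula with $(\lambda+\tfrac12)_n$ is slightly cleaner, because it covers $k=2$ (the Chebyshev case) directly, whereas the paper's constants $\kappa_n^{(\alpha)}$ and $C_n^{(\alpha)}(1)$ each degenerate at $\alpha=0$ and only their ratio makes sense there.
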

\begin{proof}
The main tool in this proof is the Funk--Hecke formula~\cite[Theorem~1.2.9]{DaiXu2013} which we now recall.
Let $\alpha=\frac{k-2}{2}\geq 0$ and let $h:[-1,1]\to\mathbb C$ be integrable with respect to  the measure
$(1-t^2)^{\alpha-\frac12}\,\dint t$.
Define the integral operator $T_h: L^{2}(\mathbb S^{k-1},\nu_k)\to L^{2}(\mathbb S^{k-1},\nu_k)$ by
\[
(T_h f)(v):=\int_{\mathbb S^{k-1}} h(\langle v,u\rangle)\,f(u)\, \nu_k(\dint u).
\]
The Funk--Hecke formula states that  for every  $n\in \N_0$ and every degree $n$ spherical harmonic $Y\in\mathcal H_n$, we have $T_h Y=\lambda_n(h)\,Y$, where
\[
\lambda_n(h)
=\frac{b_\alpha}{C_n^{(\alpha)}(1)}
\int_{-1}^1 h(t)\,C_n^{(\alpha)}(t)\,(1-t^2)^{\alpha-\frac12}\,\dint t,
\qquad
b_\alpha=\left(\int_{-1}^1 (1-t^2)^{\alpha-\frac12}\,\dint t\right)^{-1}
=\frac{\Gamma(\alpha+1)}{\sqrt{\pi}\,\Gamma(\alpha+\tfrac12)}.
\]
%%%% I checked the formula and the constants. Note that \beta_\alpha = \omega_{k-1}/omega_k - in the book the integral is taken over a non-normalized measure on the sphere.
Here $C_n^{(\alpha)}$ denotes the Gegenbauer polynomial~\cite[Sections~B.1,B.2]{DaiXu2013} of degree $n$. For our purposes, it is convenient to define it by the Rodrigues' formula (see~\cite[Equations~(B.1.2), (B.2.1)]{DaiXu2013})
\[
C_n^{(\alpha)}(t)\, (1-t^2)^{\alpha-\frac12}
=\kappa_n^{(\alpha)}\,\frac{\dint^n}{\dint t^n}\Big((1-t^2)^{n+\alpha-\frac12}\Big),
\qquad
\kappa_n^{(\alpha)}=\frac{(-1)^n}{2^n n!}\,
\frac{\Gamma\!\left(\alpha+\tfrac12\right)\Gamma(n+2\alpha)}
{\Gamma(2\alpha)\Gamma\!\left(n+\alpha+\tfrac12\right)}.
\]
%ZK: I checked this formula carefully.
%--- Action of S on spherical harmonics via Funk--Hecke ---
In our setting, $h(t)=\operatorname{sign}(t)$, and the Funk--Hecke formula gives $SY=\tau_n\,Y$ for all $Y\in \mathcal H_n$, where
\[
\tau_n=\frac{b_\alpha}{C_n^{(\alpha)}(1)}\, I_n^{(\alpha)}
\qquad
\text{ with }
\qquad
I_n^{(\alpha)}:= \int_{-1}^{1} \operatorname{sign}(t)\,C_n^{(\alpha)}(t)\,(1-t^2)^{\alpha-\frac12}\,\dint t.
%=\frac{b_\alpha}{C_n^{(\alpha)}(1)}
%\int_{-1}^{1} \operatorname{sign}(t)\,C_n^{(\alpha)}(t)\,(1-t^2)^{\alpha-\frac12}\,dt,
%\qquad
%b_\alpha=\left(\int_{-1}^{1}(1-t^2)^{\alpha-\frac12}\,dt\right)^{-1}.
\]
%Recall that
%\[
%b_\alpha=\left(\int_{-1}^1 (1-t^2)^{\alpha-\frac12}\,dt\right)^{-1}
%=\frac{\Gamma(\alpha+1)}{\sqrt{\pi}\,\Gamma\!\left(\alpha+\tfrac12\right)},
%\qquad
%C_n^{(\alpha)}(1)=\frac{\Gamma(n+2\alpha)}{\Gamma(2\alpha)\,n!}.
%\]
It remains to compute $I_n^{(\alpha)}$.
Writing $F_n(t):=(1-t^2)^{n+\alpha-\frac12}$, splitting the integral and using the fact that the antiderivative of $F_n^{(n)}$ is $F_n^{(n-1)}$ gives
\[
I_n^{(\alpha)}=\kappa_n^{(\alpha)} \int_{-1}^1 \operatorname{sign}(t)\,F_n^{(n)}(t)\,\dint t
=
- \kappa_n^{(\alpha)} F_n^{(n-1)}(t) \big |_{t=-1}^{t=0} + \kappa_n^{(\alpha)} F_n^{(n-1)}(t) \big |_{t=0}^{t=1}.
%= -2\kappa_n^{(\alpha)}\,F_n^{(n-1)}(0).
\]
Since the vanishing order of $F(t)$ at $t= \pm 1$ is $n+\alpha-\frac12 > n-1$, we have $F_n^{(n-1)}(\pm 1) = 0$.  It follows that
$$
I_n^{(\alpha)} = -2\kappa_n^{(\alpha)}\,F_n^{(n-1)}(0).
$$
Taking these results together gives
\begin{equation}\label{eq:tau_n_tech_123}
\tau_n=\frac{b_\alpha}{C_n^{(\alpha)}(1)}\,I_n^{(\alpha)} =  \frac{b_\alpha}{C_n^{(\alpha)}(1)}\, \cdot (-2\kappa_n^{(\alpha)})  F_n^{(n-1)}(0) = \frac{(-1)^{n+1}}{2^{n-1}}\,\frac{\Gamma(\alpha+1)}{\sqrt \pi}\,
\frac{F_n^{(n-1)}(0)}{\Gamma(n+\alpha + \frac 12)},
\end{equation}
where we used that
\[
\kappa_n^{(\alpha)}=\frac{(-1)^n}{2^n n!}\,
\frac{\Gamma\!\left(\alpha+\frac12\right)\Gamma(n+2\alpha)}
{\Gamma(2\alpha)\Gamma\!\left(n+\alpha+\frac12\right)},\qquad
b_\alpha=\frac{\Gamma(\alpha+1)}{\sqrt\pi\,\Gamma\!\left(\alpha+\frac12\right)},\qquad
C_n^{(\alpha)}(1)=\frac{\Gamma(n+2\alpha)}{\Gamma(2\alpha)\,n!};
\]
see~\cite[Equation~(B.2.2)]{DaiXu2013} for the formula for $C_n^{(\alpha)}(1)$.
The function $F(t)$ is even and~\eqref{eq:tau_n_tech_123} implies that  $\tau_n=0$ for even $n$.  Let $n$ be odd.  Expanding around $t=0$ gives
\[
F_n(t)=(1-t^2)^{\,n+\alpha-\frac12} = \sum_{j=0}^\infty (-1)^j\binom{n+\alpha-\frac12}{j}\,t^{2j}.
\]
Then $n-1$ is even and the term contributing to $t^{n-1}$ is $j=\frac{n-1}{2}$, hence
\[
F_n^{(n-1)}(0)
=(n-1)!\,(-1)^{\frac{n-1}{2}}
\binom{n+\alpha-\frac12}{\frac{n-1}{2}}
=(-1)^{\frac{n-1}{2}}\,
\frac{\Gamma(n) \, \Gamma\!\left(n+\alpha+\frac12\right)}
{\Gamma\!\left(\frac{n+1}{2}\right)\,
 \Gamma\!\left(\alpha+\frac{n+2}{2}\right)}.
\]
Plugging this into~\eqref{eq:tau_n_tech_123} and using Legendre's duplication formula $\Gamma(n)
=2^{\,n-1}\,\frac{\Gamma\!\left(\frac{n}{2}\right)\,\Gamma\!\left(\frac{n+1}{2}\right)}{\sqrt{\pi}}$ gives
\[
\tau_n
=
(-1)^{\frac{n-1}{2}}\,
\frac{\Gamma(\alpha+1)\,\Gamma\!\left(\frac{n}{2}\right)}
{\pi\,\Gamma\!\left(\alpha+\frac{n}{2}+1\right)}
=
(-1)^{\frac{n-1}{2}}\,
\frac{\Gamma\!\left(\frac{k}{2}\right)\Gamma\!\left(\frac{n}{2}\right)}
{\pi\,\Gamma\!\left(\frac{n+k}{2}\right)},
\qquad n\ \text{odd}.
\]
In the last step, we used  $\alpha=\frac{k-2}{2}$.
\end{proof}

\begin{proposition}[Traces of $A$ and $A^2$]\label{lem:traces_A_A_square}
The operator $A$ defined in~\eqref{eq:A_oper_def} is of trace class and
\begin{align*}
\operatorname{tr}(A)
&=
\sum_{n = 1,3,5,\ldots} d_{n,k}\, \left(\frac{\Gamma\!\left(\frac{k}{2}\right)\Gamma\!\left(\frac{n}{2}\right)}
{\pi\,\Gamma\!\left(\frac{n+k}{2}\right)}\right)^2 \;=\; 1,
\\
\operatorname{tr}(A^2)
&=
\sum_{n = 1,3,5,\ldots} d_{n,k}\, \left(\frac{\Gamma\!\left(\frac{k}{2}\right)\Gamma\!\left(\frac{n}{2}\right)}
{\pi\,\Gamma\!\left(\frac{n+k}{2}\right)}\right)^4
%= \int_{\mathbb S^{k-1}}\int_{\mathbb S^{k-1}} K(v,u)^2\,\nu(\dint v)\,\nu(\dint u)
%= \frac{\Gamma\!\left(\frac{k}{2}\right)}{\sqrt{\pi}\,\Gamma\!\left(\frac{k-1}{2}\right)} \int_{0}^{\pi}\left(\frac{2\theta}{\pi}-1\right)^{\!2}\sin^{k-2}\!\theta\;d\theta
=\frac{2}{\pi^2}\,\psi_1\!\left(\frac{k}{2}\right),
\end{align*}
where
$
d_{n,k} = \dim \mathcal H_n=\binom{n+k-1}{n}-\binom{n+k-3}{n-2}
$
and $\psi_1$ is the trigamma function.
\end{proposition}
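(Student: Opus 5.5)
The plan is to use the factorization $A=S^2$ together with Lemma~\ref{lem:spectral_decomp_S}. Since $S$ is self-adjoint with kernel $\operatorname{sign}(\langle v,u\rangle)$, and this kernel is bounded, $S$ is Hilbert--Schmidt. Hence $A=S^2=S^*S$ is a positive operator of trace class. Its trace equals $\|S\|_{HS}^2$, which is the integral of the squared kernel of $S$ against $\nu_k\otimes\nu_k$:
\[
\operatorname{tr}(A)=\int_{\bS^{k-1}}\int_{\bS^{k-1}}\operatorname{sign}(\langle v,u\rangle)^2\,\nu_k(\dint u)\,\nu_k(\dint v)=1,
\]
because $\langle v,u\rangle\neq 0$ for $\nu_k\otimes\nu_k$-a.e.\ pair. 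Equivalently, $\operatorname{tr}(A)=K(v,v)=\frac{2}{\pi}\arcsin 1=1$ integrated over the diagonal; Mercer's theorem would also apply since $A\ge 0$ has a continuous kernel.

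On the spectral side, $L^2(\bS^{k-1},\nu_k)=\widehat{\bigoplus}_n\mathcal H_n$, and $A$ acts on $\mathcal H_n$ as multiplication by $\lambda_n=\tau_n^2$ (Lemma~\ref{lem:spectral_decomp_A}). Choosing an orthonormal basis adapted to this decomposition gives $\operatorname{tr}(A)=\sum_n d_{n,k}\lambda_n$, with only odd $n$ contributing. This yields the series expression, and comparing it with the previous paragraph shows that it equals $1$.

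For $A^2$ the same argument applies. Since $A$ is Hilbert--Schmidt and self-adjoint, $\operatorname{tr}(A^2)=\|A\|_{HS}^2=\sum_n d_{n,k}\lambda_n^2$, which gives the series with fourth powers. It also equals the double integral of $K(u,v)^2$:
\[
\operatorname{tr}(A^2)=\E\Bigl[\Bigl(\tfrac{2}{\pi}\arcsin\langle V_1,V_2\rangle\Bigr)^2\Bigr],
\]
where $V_1,V_2$ are independent and uniform on $\bS^{k-1}$. Since $\arcsin\langle V_1,V_2\rangle=\frac\pi2-\Theta$ with $\Theta$ as in Lemma~\ref{lem:angle_between_two_vectors}, and $\E\Theta=\pi/2$, this equals $\frac{4}{\pi^2}\Var\Theta=\frac{4}{\pi^2}\cdot\frac12\psi_1(k/2)=\frac{2}{\pi^2}\psi_1(k/2)$.

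The only delicate point is justifying the trace identities, in particular the completeness of the spherical harmonic decomposition and the trace-class property. Both are handled by the $S^*S$ factorization together with the density statement cited from~\cite{DaiXu2013}, so no term-by-term convergence analysis of the Gamma-function series is needed.
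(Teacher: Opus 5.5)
Your proposal is correct and follows essentially the same route as the paper. Both arguments use the factorization $A=S^2$ with $S=S^*$ Hilbert--Schmidt, which gives that $A$ is trace class with $\operatorname{tr}(A)=\|S\|_{\mathrm{HS}}^2=1$. Both then evaluate $\operatorname{tr}(A^2)=\|A\|_{\mathrm{HS}}^2$ as $\frac{4}{\pi^2}\Var\Theta$, and read off the two series from the eigenvalues and multiplicities in Lemma~\ref{lem:spectral_decomp_A}.
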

\begin{proof}
Let us first show that $A, A^2$ are trace class operators and  $\operatorname{tr}(A)= 1$,  $\operatorname{tr}(A^2)= \frac 2{\pi^2} \psi_1 (\frac k2)$.
Recall that $A=S^2= S S^*$ and $S=S^*$ is Hilbert--Schmidt. It follows that $A$ is trace class and $\operatorname{tr}(A)= \|S\|^2_{\text{HS}}$ is the squared Hilbert--Schmidt norm of $S$. Since the kernel of $S$ takes only values $\pm 1$, we have $\|S\|_{\text{HS}}^2 = 1$ and hence $\operatorname{tr}(A) = 1$. Similarly, $A^2 = A A^*$ and $A=A^*$ is Hilbert--Schmidt, hence $A^2$ is of trace class  and
\begin{align*}
\operatorname{tr}(A^2)
&= \|A\|_{\text{HS}}^2
=
\frac{4}{\pi^2} \iint_{\mathbb S^{k-1}\times \mathbb S^{k-1}} \arcsin^2\!\big(\langle v,u\rangle\big) \,\nu_k(\dint v)\,\nu_k(\dint u)
=
\frac{4}{\pi^2} \int_{\bS^{k-1}} \arcsin^2\!\big(\langle v,e_1\rangle\big) \,\nu_k(\dint v)
\\
&=
\frac{4}{\pi^2}\,\frac{\Gamma\!\left(\frac{k}{2}\right)}{\sqrt{\pi}\,\Gamma\!\left(\frac{k-1}{2}\right)} \int_{-1}^1 \arcsin^2(t)\,(1-t^2)^{\frac{k-3}{2}}\,\dint t
=\frac{4}{\pi^2}\, \frac{\Gamma\!\left(\frac{k}{2}\right)}{\sqrt{\pi}\,\Gamma\!\left(\frac{k-1}{2}\right)} \int_{-\pi/2}^{\pi/2} x^2\,\cos^{k-2}x\,\dint x,
\end{align*}
where the last step follows by the substitution $t= \sin x$. We have already computed the right-hand side in the proof of Lemma~\ref{lem:angle_between_two_vectors}: it equals $\frac 4{\pi^2} \Var \Theta  =  \frac 2{\pi^2} \psi_1 (\frac k2)$. So, $\operatorname{tr}(A^2)= \frac 2{\pi^2} \psi_1 (\frac k2)$.

On the other hand, we have seen in Lemma~\ref{lem:spectral_decomp_A} that the nonzero eigenvalues of $A$ are
\[
\lambda_n=\left(\frac{\Gamma\!\left(\frac{k}{2}\right)\Gamma\!\left(\frac{n}{2}\right)}
{\pi\,\Gamma\!\left(\frac{n+k}{2}\right)}\right)^2 \qquad (\text{with multiplicity } d_{n,k} = \dim \mathcal H_n),   \quad n=1,3,5,\ldots.
\]
Note that $0$ is also an eigenvalue of $A$, and the eigenspace of $0$ is the closure of $\oplus_{n=0,2,4,\ldots} \mathcal H_n$, however, $0$ does not contribute to the trace.  It follows that $\operatorname{tr}(A) = \sum_{n=1,3,5,\ldots} \lambda_n d_{n,k}$ and $\operatorname{tr}(A^2) = \sum_{n=1,3,5,\ldots} \lambda_n^2 d_{n,k}$, which completes the proof.
\end{proof}

\begin{example}[$k=2$]
For $k=2$ the underlying space is the unit circle $\mathbb S^{1}$. We represent $v\in \bS^1$ as $v= (\cos \theta, \sin \theta)$. The spaces of spherical harmonics are as follows:  $\mathcal H_0=\lin\{1\}$ and $\mathcal H_n= \lin \{\cos (n\theta), \sin (n\theta)\}$  for $n\geq 1$.    So $\dim  \mathcal H_0=1$ and $\dim \mathcal H_n=2$ for $n\ge 1$.
The non-zero eigenvalues of the operator $A$ are $4/(\pi^2 n^2)$, $n=1,3,5,\dots$, each with multiplicity $2$. This can be verified by direct computation.
\end{example}

\subsection{Proof of Theorem~\ref{theo:distributional_limit_face_count_random_cone}: Completing the argument} \label{subsec:proof_theo_f_vect_U_stat_part_3}
We now gathered all ingredients needed to prove Theorem~\ref{theo:distributional_limit_face_count_random_cone} for $k\geq 2$.  Recall from~\eqref{eq:f_vect_as_U_stats_theorem_weak_conv} the representation
$$
f_{d-q}(\cW_{d+k,d})/\binom {d+k}{d-q} = \mathbb U_n,
$$
where $\mathbb U_n$ is the $U$-statistic  corresponding to the kernel $h(v_1,\ldots, v_m) = \ind_{\{\pos (v_1,\ldots, v_{m}) = \R^{k}\}}$, for all  $v_1,\ldots, v_{m} \in \bS^{k-1}$. In Lemma~\ref{eq:u_stat_zeta_1_h_1} we have shown that $\zeta_1 = 0$ and in Lemma~\ref{lem:u_stat_proj_kernel_c_2} we computed
$$
\widetilde g_2(v_1,v_2)
%= \frac 1 {2^{m-1}} \binom {m-2}{k-1} \left(\frac 2 {\pi} \arccos \langle v_1,v_2\rangle -1\right)
=
- \frac 1 {2^{m-1}} \binom {m-2}{k-1} \cdot \frac 2 {\pi} \arcsin \langle v_1,v_2\rangle,
\qquad
v_1\neq \pm v_2.
$$
In Corollary~\ref{cor:u_stat_zeta_2_formula} we have seen that $\zeta_2 >0$. We are thus in the setting of Theorem~\ref{theo:weak_limit_U_stat_c_2}.
The operator $T:L^2(\bS^{k-1}, \nu_k)\to L^2(\bS^{k-1}, \nu_k)$ appearing in  Theorem~\ref{theo:weak_limit_U_stat_c_2} is
$$
(Tf)(x):=\int_{\bS^{k-1}} \widetilde g_2(x,y)\, f(y)\,\nu_k(\dint y) =  - \frac 1 {2^{m-1}} \binom {m-2}{k-1} \cdot (Af)(x),
$$
where $A$ is the operator we analyzed in Section~\ref{subsec:proof_theo_f_vect_U_stat_part_2}. We have seen in Lemma~\ref{lem:spectral_decomp_A} that the non-zero eigenvalues of the operator $A$ are
$$
\lambda_r = \left(\frac{\Gamma\!\left(\frac{k}{2}\right)\Gamma\!\left(\frac{r}{2}\right)}
{\pi\,\Gamma\!\left(\frac{r+k}{2}\right)}\right)^2
\text{ with multiplicity }
d_{r,k}
= \binom{r+k-1}{r}-\binom{r+k-3}{r-2},
\qquad
r=1,3,5,\ldots.
$$
The non-zero eigenvalues of $T$ are then $- \frac 1 {2^{m-1}} \binom {m-2}{k-1} \, \lambda_r$ with  $r=1,3,5,\ldots$ and the same multiplicities $d_{r,k}$ as before.  Now we have everything to apply Theorem~\ref{theo:weak_limit_U_stat_c_2}.
Theorem~\ref{theo:weak_limit_U_stat_c_2} gives
\[
n\,(\mathbb U_n-\E \mathbb U_n)
\todistr
-\frac {\binom{m}{2}\binom {m-2}{k-1}} {2^{m-1}} \,\sum_{r=1,3,5,\ldots} \sum_{j=1}^{d_{r,k}} \lambda_r \,(\xi_{r,j}^2-1),
%=
%\frac {\binom{m}{2}\binom {m-2}{k-1}} {2^{m-1}}  \, \left(1 -\sum_{j=1}^\infty \lambda_j(A)\,\xi_j^2\right),
\]
where $\xi_{r,j}$ are i.i.d.\ standard normal random variables indexed by $r\in \{1,3,5,\ldots\}$ and  $j\in \{1,\ldots, d_{r,k}\}$. In Lemma~\ref{lem:traces_A_A_square} we have seen that $\operatorname{tr}(A)  = \sum_{r=1,3,5,\ldots}^\infty d_{r,k} \lambda_r = 1$, which allows us to rewrite the above result as
\[
n\,(\mathbb U_n-\E \mathbb U_n)
\todistr
\frac {\binom{m}{2}\binom {m-2}{k-1}} {2^{m-1}} \, (1-Q_k) \qquad \text{ with } \qquad Q_k:= \sum_{r=1,3,5,\ldots} \lambda_r \, \sum_{j=1}^{d_{r,k}} \xi_{r,j}^2.
\]
Now observe that $\sum_{j=1}^{d_{r,k}} \xi_{r,j}^2$ is distributed as $\operatorname{Gamma} (\frac{d_{r,k}}{2}, \frac 12)$  (chi squared distribution) and these variables are independent for different $r$'s. This proves the distributional representation for $Q_k$ stated in Theorem~\ref{theo:distributional_limit_face_count_random_cone}. By Lemma~\ref{lem:traces_A_A_square},  $\E Q_k = \operatorname{tr}(A)= 1$. Similarly, the variance of $Q_k$ is given by
$$
\Var Q_k = \sum_{r=1,3,5,\ldots} \lambda_r^2 \, \sum_{j=1}^{d_{r,k}} \Var (\xi_{r,j}^2) =  \operatorname{tr}(A^2)  \Var (\xi_{1,1}^2) = \frac{2}{\pi^2}\,\psi_1\!\left(\frac{k}{2}\right) \Var (\xi_{1}^2) = \frac{4}{\pi^2}\,\psi_1\!\left(\frac{k}{2}\right),
$$
where the formula for $\operatorname{tr}(A^2)$ comes from Lemma~\ref{lem:traces_A_A_square}. The asymptotic formula for $\Var \mathbb U_n$ stated in~\eqref{eq:var_face_count_asymptotics} has been already established in Proposition~\ref{prop:variance_face_count}.
The proof of Theorem~\ref{theo:distributional_limit_face_count_random_cone} in the case where $k\geq 2$ is complete.

%  and $\xi_1,\xi_2,\ldots$ are i.i.d.\ standard normal.
%The characteristic function of the random variable $Q_k:= \sum_{j=1}^\infty \lambda_j(A)\,\xi_j^2$ is given by
%$$
%\varphi_{Q_k}(t)
%:=
%\E \left[\eee^{\ii t Q_k}\right]
%=
%\prod_{j=1}^\infty
%\left(1-2\ii t\lambda_j\right)^{-1/2}
%=
%\prod_{n=1,3,5,\ldots}
%\left(1-2\ii t  \left(\frac{\Gamma\!\left(\frac{k}{2}\right)\Gamma\!\left(\frac{n}{2}\right)}
%{\pi\,\Gamma\!\left(\frac{n+k}{2}\right)}\right)^2\right)^{\frac 12\binom{n+k-3}{n-2} - \frac 12\binom{n+k-1}{n}}.
%$$

%\subsection{Proof of Theorem~\ref{theo:exp_concentration_face_count}}

\subsection{Proof of Theorem~\ref{theo:distributional_limit_face_count_random_cone}: Case~\texorpdfstring{$k=1$}{k=1}} \label{subsec:proof_theo_f_vect_U_stat_part_4_k_equals_1}
The case where $k=1$ requires special treatment since the unit sphere becomes degenerate,  $\mathbb S^{0}=\{\pm1\}$, and the parts of the proof based on the conic kinematic formula and spherical harmonics do not apply directly. Our setting is as follows:

\begin{itemize}
\item The sample space is $\mathcal X = \mathbb S^{0} = \{\pm1\}$, endowed with the uniform distribution $\nu_1$.
\item  $V_1,V_2,\ldots,V_n$, where $n= d+1$,  are i.i.d.\ with
$\P[V_i=1]=\P[V_i=-1]=1/2$.
\item The order of the $U$-statistic is $m:= n-\ell = q+1$ and the kernel $h: \{\pm 1\}^{m} \to \{0,1\}$ is
\[
h(v_1,\ldots,v_m)
:=\ind_{\{\pos(v_1,\ldots,v_m)=\R\}}
=
1-\ind_{\{v_1=\ldots= v_m\}}.
%=\ind_{\{\text{among }v_1,\ldots,v_m\text{ there is at least one }+1\text{ and at least one }-1\}}.
\]
\end{itemize}
The associated $U$-statistic is $\mathbb U_n = f_{d-q}(\cW_{d+1,d})/\binom {d+1}{d-q}$; see~\eqref{eq:f_vect_as_U_stats_theorem_weak_conv}.
%\[
%\mathbb U_n
%:=
%\binom{n}{m}^{-1}
%\sum_{1\le i_1<\cdots<i_m\le n}
%h(V_{i_1},\ldots,V_{i_m}).
%\]
%\begin{equation}\label{eq:f_vect_as_U_stats_theorem_weak_conv}
%\mathbb U_n
%=
%\binom{d+k}{m}^{-1}
%\sum_{1\le i_1<\cdots<i_{m}\le d+k}
%\ind_{\{\pos (V_{i_1},\ldots, V_{i_{m}}) = \R^{k}\}}
%=
%\frac{f_{d-q}(\cW_{d+k,d})}{\binom {d+k}{d-q}}.
%\end{equation}
\begin{lemma}[Conditional expectation kernels for $k=1$]
We have $g_0=\E \mathbb U_n = 1-2^{1-m}$, $\widetilde g_1(x) = 0$ for $x\in\{\pm1\}$ and
\begin{equation}\label{eq:h_2_centered_k_equals_1}
\widetilde g_2(x,y)=
\begin{cases}
+2^{1-m}, & x\neq y,\\
-2^{1-m}, & x=y,
\end{cases}
\qquad x,y\in\{\pm1\}.
\end{equation}
Consequently, $\zeta_1= 0$ and $\zeta_2 =2^{2-2m}$.
\end{lemma}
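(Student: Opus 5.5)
The plan is to compute everything directly from the definition of the kernel $h(v_1,\ldots,v_m)=1-\ind_{\{v_1=\cdots=v_m\}}$ on $\{\pm1\}^m$, where $V_1,\ldots,V_m$ are i.i.d.\ symmetric signs. First, $g_0=\E h(V_1,\ldots,V_m)=1-\P[V_1=\cdots=V_m]$. The event that all signs agree has probability $2\cdot 2^{-m}=2^{1-m}$, which gives $g_0=1-2^{1-m}$. As a consistency check, this agrees with Wendel's formula $p(m,1)=2^{1-m}\sum_{\ell=1}^{m-1}\binom{m-1}{\ell}=1-2^{1-m}$.

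Next, for $c=1$ and fixed $x\in\{\pm1\}$, we have $g_1(x)=1-\P[V_2=\cdots=V_m=x]=1-2^{-(m-1)}=g_0$. Hence $\widetilde g_1\equiv 0$ and $\zeta_1=0$, in agreement with the rotation-invariance argument of Lemma~\ref{eq:u_stat_zeta_1_h_1}, here invariance under $v\mapsto -v$.

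For $c=2$ (this needs $m\ge 2$, i.e.\ $q\ge1$, so it is fine) we distinguish two cases. If $x\neq y$, then $h(x,y,V_3,\ldots,V_m)=1$ identically, so $g_2(x,y)=1$ and $\widetilde g_2(x,y)=2^{1-m}$. If $x=y$, then $g_2(x,x)=1-\P[V_3=\cdots=V_m=x]=1-2^{-(m-2)}=1-2\cdot 2^{1-m}$, so $\widetilde g_2(x,x)=-2^{1-m}$. This is \eqref{eq:h_2_centered_k_equals_1}.

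Finally, the event $\{V_1=V_2\}$ has probability $1/2$, so $\widetilde g_2(V_1,V_2)$ equals $\pm 2^{1-m}$ with probability $1/2$ each. It has mean zero, consistent with $\E \widetilde g_2=0$, and its second moment is $\zeta_2=\E[\widetilde g_2(V_1,V_2)^2]=2^{2-2m}>0$.

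No step is a real obstacle. The only care needed is in counting the number of free variables in each conditional probability: $m-1$ for $g_1$ and $m-2$ for $g_2$.
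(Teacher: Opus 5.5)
Your proposal is correct and follows essentially the same route as the paper: you compute $g_0$, $g_1$ and $g_2$ directly from $h=1-\ind_{\{v_1=\cdots=v_m\}}$ by counting the free signs ($m$, $m-1$, $m-2$), splitting into the cases $x\neq y$ and $x=y$. You then read off $\zeta_2=\E[\widetilde g_2(V_1,V_2)^2]=(2^{1-m})^2$. Your Wendel consistency check and the remark on the case $m=2$ are correct extras and do not change the argument.
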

\begin{proof}
Clearly,
\[
g_0 =\E \mathbb U_n=\E h(V_1,\ldots,V_m)
=1-\P[V_1=\ldots=V_m] = 1-2\cdot 2^{-m}.
\]
Next, for $x\in\{\pm1\}$,
\[
g_1(x) = \E\big[h(x,V_2,\ldots,V_m)\big]
=1-\P\big[x,V_2,\ldots,V_m\text{ are all equal}\big] = 1-2^{-(m-1)} = g_0.
\]
Hence $\widetilde g_1(x) = g_1(x) -g_0 = 0$ and $\zeta_1 = 0$.  Next, for $x,y\in\{\pm1\}$,
\[
g_2(x,y)=\E\big[h(x,y,V_3,\ldots,V_m)\big]
=1-\P\big[x,y,V_3,\ldots,V_m\text{ are all equal}\big].
\]
If $x\neq y$, then the event ``all equal''
is impossible and hence $g_2(x,y)=1$.
If $x=y$, then ``all equal'' occurs iff $V_3=\cdots=V_m=x$, which has probability
$2^{-(m-2)}$. Therefore
\[
g_2(x,y)=
\begin{cases}
1, & x\neq y,\\[1mm]
1-2^{-(m-2)}, & x=y,
\end{cases}
\qquad x,y\in\{\pm1\}.
\]
Hence the centered kernel $\widetilde g_2(x,y)=g_2(x,y)-g_0 =g_2(x,y)-  1 + 2\cdot 2^{-m} $ is given by~\eqref{eq:h_2_centered_k_equals_1}.
Finally,
$
\zeta_2
=\E[\widetilde g_2(V_1,V_2)^2]
=(2^{1-m})^2
=2^{2-2m}.
$
\end{proof}

As a linear space, $L^2(\{\pm 1\},\nu_1)$ can be identified with $\R^2$ by identifying a function $f$ with the vector $(f(+1),f(-1))^\top$. The integral operator $T:L^2(\{\pm 1\},\nu_1)\to L^2(\{\pm 1\},\nu_1)$ with kernel $\widetilde g_2(x,y)$ appearing in Theorem~\ref{theo:weak_limit_U_stat_c_2} is then represented by the matrix
%\[
%(Tf)(x):=\int_{\mathcal X}\widetilde h_2(x,y)\,f(y)\,\nu(\dint y)
%=\frac12\Big(\widetilde g_2(x,+1)\,f(+1)+\widetilde g_2(x,-1)\,f(-1)\Big),
%\qquad x\in\{\pm1\}.
%\]
%Writing $a:=2^{1-m}$ and identifying $f$ with the vector $(f(+1),f(-1))^\top$, this operator is
%represented by the matrix
\[
T \;\equiv\; 2^{-m}
\begin{pmatrix}
-1 & \phantom{-}1\\
\phantom{-}1 & -1
\end{pmatrix}.
\]
In particular, $T$ has eigenvalues $0$ and $-2^{1-m}$. Recall that $n=d+1\sim d$ and $m=q+1$. Theorem~\ref{theo:weak_limit_U_stat_c_2} yields
\begin{align*}
n\cdot \left(\frac{f_{d-q}(\cW_{d+1,d}) - \E f_{d-q}(\cW_{d+1,d})}{\binom {d+1}{d-q}}\right)
=
n\,(\mathbb U_n-\E \mathbb U_n)
\ & \todistr\
-\binom{m}{2}\,2^{1-m}\,(\xi^2-1)
\\
&=
\binom{q+1}{2}\,2^{-q}\,(1-\xi^2),
\end{align*}
where $\xi$ is standard normal and hence $Q_1:= \xi^2 \eqdistr  \operatorname{Gamma}(\frac 12, \frac 12)$.
Applying~\eqref{eq:U_stat_variance_asymptotic} and recalling that $\zeta_2 = 2^{-2q}$ gives
$$
\Var \left(\frac{f_{d-q}(\cW_{d+1,d})}{\binom {d+1}{d-q}}\right)
=
\Var  \mathbb U_n
\sim
\frac{2\, \binom{m}{2}^2}{n^2}\;\zeta_2
\sim
\frac{\binom{q+1}{2}^2}{d^2}\;2^{1-2q}.
$$
Note finally that $\E [\xi^2] = 1$  and $\Var [\xi^2] = 2 = \frac{4}{\pi^2}\,\psi_1 (\frac{1}{2})$. This proves Theorem~\ref{theo:distributional_limit_face_count_random_cone} for $k=1$.

\subsection{Exponential concentration of face counts}
Let us finally  mention the following non-asymptotic concentration result. Recall that $p(n,k)$ is given by Wendel's formula~\eqref{eq:wendel_formula}.
\begin{theorem}[Exponential concentration of face counts]\label{theo:exp_concentration_face_count}
Let $d\in \N$, $n > d$, and $\ell \in \{0,\ldots, d-1\}$.  Then, for all $t>0$,
$$
\P\left[\left|\frac{f_\ell(\cW_{n,d})}{\binom n\ell} - p(n-\ell, n-d)\right| \geq t\right] \leq 2 \eee^{-2t^2 \lfloor n/(n-\ell) \rfloor}.
$$
\end{theorem}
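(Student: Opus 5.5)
The plan is to use the $U$-statistic representation~\eqref{eq:face_count_gale_dual} together with Hoeffding's classical exponential inequality for bounded $U$-statistics. By Theorem~\ref{theo:gale_duality_stoch_orthant} with $k=n-d$, we may couple $U_1,\ldots,U_n$ with i.i.d.\ uniform $V_1,\ldots,V_n\in\bS^{n-d-1}$. Then, almost surely,
\[
\frac{f_\ell(\cW_{n,d})}{\binom n\ell}=\mathbb U_n:=\binom{n}{m}^{-1}\sum_{\substack{J\subseteq[n]\\ \#J=m}} h(V_j:j\in J),
\qquad m:=n-\ell,
\]
with kernel $h(v_1,\ldots,v_m)=\ind_{\{\pos(v_1,\ldots,v_m)=\R^{n-d}\}}$. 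This kernel is symmetric and takes values in $[0,1]$. By Wendel's formula~\eqref{eq:wendel_formula}, or equivalently by Proposition~\ref{prop:expected_face_number_donoho_tanner_cone}, its mean is $\E\mathbb U_n=p(n-\ell,n-d)$. Because the two random variables agree almost surely, the tail probability in the statement equals $\P[|\mathbb U_n-\E\mathbb U_n|\ge t]$. (For $k=n-d=1$ the sphere $\bS^0=\{\pm1\}$ is still a valid sample space, so no separate case is needed.)

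Next I would reprove Hoeffding's bound (Hoeffding, 1963) in this setting. Put $r:=\lfloor n/m\rfloor$. For a permutation $\sigma$ of $[n]$, define
\[
W_\sigma:=\frac1r\sum_{s=1}^{r} h\bigl(V_{\sigma((s-1)m+1)},\ldots,V_{\sigma(sm)}\bigr).
\]
Each $W_\sigma$ is an average of $r$ independent $[0,1]$-valued variables, since the $r$ blocks of indices are disjoint. Averaging over all permutations gives $\mathbb U_n=\frac1{n!}\sum_\sigma W_\sigma$, because every $m$-subset appears equally often as a block.

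For $\lambda>0$, convexity of $x\mapsto e^{\lambda x}$ (Jensen over the uniform average in $\sigma$) gives
\[
\E e^{\lambda(\mathbb U_n-\E\mathbb U_n)}\le \frac1{n!}\sum_\sigma \E e^{\lambda(W_\sigma-\E W_\sigma)}.
\]
Hoeffding's lemma applied to the $r$ independent bounded summands bounds each term on the right by $e^{\lambda^2/(8r)}$. Markov's inequality with the optimal choice $\lambda=4rt$ then yields $\P[\mathbb U_n-\E\mathbb U_n\ge t]\le e^{-2rt^2}$. Applying the same argument to $1-h$ gives the lower tail, and a union bound produces the factor $2$.

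The proof has no real obstacle; the whole content is the Gale-dual $U$-statistic representation. The only step needing care is the averaging identity $\mathbb U_n=\frac1{n!}\sum_\sigma W_\sigma$. Each $m$-subset appears as one of the $r$ blocks in exactly $r\,m!\,(n-m)!$ permutations out of $n!$. Dividing by $r$ in the definition of $W_\sigma$ then produces the weight $\binom nm^{-1}$ on every subset, as required.
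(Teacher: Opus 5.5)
Your proposal is correct and follows the paper's route: the Gale-dual $U$-statistic representation \eqref{eq:face_count_gale_dual}, the mean $p(n-\ell,n-d)$ from Proposition~\ref{prop:expected_face_number_donoho_tanner_cone}, and Hoeffding's inequality for $U$-statistics with a $\{0,1\}$-valued kernel (so $L=1$). The only difference is that you reprove Hoeffding's bound through the permutation-averaging argument, and your details (block counting, Jensen, Hoeffding's lemma, $\lambda=4rt$) check out, whereas the paper simply cites it as Lemma~\ref{lem:hoeffding_for_U_stat}.
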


For the proof we need a result of~\citet{Hoeffding1963ProbabilityInequalities} which  can be found in the book of~\citet[Theorem~A on p.~201]{Serfling1980ApproximationTheorems} and in~\cite{Pitcan2019NoteConcentrationUStatistics}.

\begin{lemma}[Hoeffding bound for $U$-statistics]\label{lem:hoeffding_for_U_stat}
Let the kernel $h:\mathcal{X}^m\to\R$ be bounded, that is
\[
L: = \sup_{x\in\mathcal{X}^m} h(x) - \inf_{x\in\mathcal{X}^m} h(x) < \infty.
\]
Then for every $n \geq m$ and $t>0$, we have
$
\P \left[\left|\mathbb U_n-\E \mathbb U_n\right|\ge t \right]
\;\le\;
2\eee^{- 2\lfloor n/m\rfloor\,t^2 / L^2}.
$
\end{lemma}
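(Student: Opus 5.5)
The plan is Hoeffding's classical argument: represent $\mathbb U_n$ as an average, over permutations, of means of independent blocks, and then apply the Chernoff method together with Jensen's inequality. Put $r:=\lfloor n/m\rfloor\ge 1$. For a permutation $\sigma$ of $[n]$ define
\[
W(\sigma):=\frac1r\sum_{j=1}^{r} h\bigl(X_{\sigma((j-1)m+1)},\ldots,X_{\sigma(jm)}\bigr),
\]
which is the mean of $r$ kernel evaluations on disjoint index blocks. The first step is the combinatorial identity
\[
\mathbb U_n=\frac1{n!}\sum_{\sigma}W(\sigma).
\]
To verify it, note that by symmetry of $h$ every $m$-subset $\{i_1<\cdots<i_m\}$ appears as the $j$-th block of $\sigma$ for the same number of permutations, whatever $j$ is. Averaging over all $\sigma$ therefore gives each $m$-subset the weight $\binom nm^{-1}$.

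Next I would bound the upper tail by the Chernoff method. For $s>0$, Markov's inequality gives
\[
\P[\mathbb U_n-\E\mathbb U_n\ge t]\le e^{-st}\,\E e^{s(\mathbb U_n-\E\mathbb U_n)}.
\]
Since $x\mapsto e^{sx}$ is convex and $\mathbb U_n-\E\mathbb U_n$ is an average of the $W(\sigma)-g_0$, Jensen's inequality yields
\[
e^{s(\mathbb U_n-g_0)}\le\frac1{n!}\sum_\sigma e^{s(W(\sigma)-g_0)}.
\]
Each $W(\sigma)$ has the same law as $W(\mathrm{id})$. The summands of $W(\mathrm{id})$ are i.i.d., since they use disjoint groups of the i.i.d.\ $X_i$, have mean $g_0$, and take values in an interval of length $L$. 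Hoeffding's lemma for bounded variables gives
\[
\E e^{(s/r)(h-g_0)}\le e^{s^2L^2/(8r^2)},
\]
so $\E e^{s(W-g_0)}\le e^{s^2L^2/(8r)}$.

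Combining these bounds gives $\P[\mathbb U_n-g_0\ge t]\le \exp(-st+s^2L^2/(8r))$. Optimizing at $s=4rt/L^2$ yields $e^{-2rt^2/L^2}$. Applying the same argument to $-h$ handles the lower tail, and a union bound gives the factor $2$.

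The only point needing care is the permutation-averaging identity: the counting of how often each subset occurs as a block, and the use of symmetry of $h$ so that the order within a block does not matter. The remaining steps are the standard Chernoff and Hoeffding-lemma computation. Alternatively, since the statement is classical, one could simply cite \citet{Hoeffding1963ProbabilityInequalities} or \citet[Theorem~A, p.~201]{Serfling1980ApproximationTheorems}.
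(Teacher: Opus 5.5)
Your proof is correct and is exactly Hoeffding's classical argument: write $\mathbb U_n$ as a permutation average of means of $\lfloor n/m\rfloor$ i.i.d.\ block evaluations, then apply Jensen, Hoeffding's lemma, and optimize at $s=4rt/L^2$. The paper gives no proof of its own; it cites \citet{Hoeffding1963ProbabilityInequalities} and \citet[Theorem~A, p.~201]{Serfling1980ApproximationTheorems}, where this same argument appears (only the trivial case $L=0$, where the bound should be read as $0$, falls outside your choice of $s$).
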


\begin{proof}[Proof of Theorem~\ref{theo:exp_concentration_face_count}]
%Consider the kernel  $h: \mathcal X^{n-\ell}\to [0, 1]$ (of order  $m := n-\ell$) defined by
%$$
%h(v_1,\ldots, v_{n-\ell}) := \ind_{\{\pos (v_1,\ldots, v_{n-\ell}) = \R^{n-d}\}},  \qquad v_1,\ldots, v_{n-\ell} \in \bS^{n-d-1}.
%$$
Recall from Section~\ref{subsec:proof_theo_f_vect_U_stat_part_0} (in particular,  Equation~\eqref{eq:face_count_gale_dual}) the identification
$$
\mathbb U_n
=
\binom{n}{n-\ell}^{-1}
\sum_{1\le i_1<\cdots<i_{n-\ell}\le n}
\ind_{\{\pos (V_{i_1},\ldots, V_{i_{n-\ell}}) = \R^{n-d}\}}
=
\frac{f_\ell(\cW_{n,d})}{\binom n\ell},
$$
where $X_i:= V_i$, $i\in \N$, are i.i.d.\ random elements with the uniform distribution on $\mathcal X:= \bS^{n-d-1}$.
By Proposition~\ref{prop:expected_face_number_donoho_tanner_cone}, we have $\E \mathbb U_n = p(n-\ell, n-d)$.  Lemma~\ref{lem:hoeffding_for_U_stat} gives $$
\P\left[\left|\frac{f_\ell(\cW_{n,d})}{\binom n\ell} - p(n-\ell, n-d)\right| \geq t\right]
=
\P \left[\left|\mathbb U_n-\E \mathbb U_n\right|\ge t \right]
\;\le\;
2\eee^{- 2\lfloor n/m\rfloor\,t^2 / L^2}
=
2 \eee^{-2t^2 \lfloor n/(n-\ell) \rfloor},
$$
since $h$ takes values $0,1$ only and hence $L= \sup_{x\in\mathcal{X}^m} h(x) - \inf_{x\in\mathcal{X}^m} h(x) = 1$.
\end{proof}

\appendix
\section{Basic facts on Gale duality}\label{sec:gale_duality_basic_facts}
In this appendix we collect a number of standard facts about (linear) Gale duality. Although
these results are well known, we include proofs for completeness.

\medskip

We first fix notation and briefly recall linear Gale duality. Let $d',d''\in\mathbb N$ and set $n:=d'+d''$. Consider vector configurations
\[
a_1,\dots,a_n\in\mathbb R^{d'}
\qquad\text{and}\qquad
b_1,\dots,b_n\in\mathbb R^{d''}.
\]
Let $A\in\mathbb R^{d'\times n}$ be the matrix with columns $a_1,\dots,a_n$ and
$B\in\mathbb R^{d''\times n}$ the matrix with columns $b_1,\dots,b_n$.
The row space of $A$, denoted by $\Row(A)$, is the linear subspace of $\mathbb R^n$
spanned by the rows of $A$; the row space $\Row(B)$ is defined analogously.
We recall from Section~\ref{subsec:gale_transforms} that the configurations
$a_1,\dots,a_n$ and $b_1,\dots,b_n$ are in \emph{linear Gale duality} if
\begin{equation}\label{eq:Gale_duality_row_spaces_orthogonal}
\dim\Row(A)=d',\qquad \dim\Row(B)=d'',\qquad \Row(A)=\Row(B)^\perp.
\end{equation}

\medskip

We begin with a basic criterion for when a finitely generated cone is in fact a linear subspace,
phrased in terms of a strictly positive linear dependence among its generators.

\begin{lemma}[Criterion for $\pos= \lin$]\label{lem:pos_eq_lin_hull}
Let $v_1,\dots,v_m\in\mathbb R^n$ be vectors. Then the following are equivalent:
\begin{enumerate}
  \item[(i)] $\pos(v_1,\dots,v_m)=\lin(v_1,\dots,v_m)$.
  \item[(ii)] There exist scalars $\lambda_1>0,\dots,\lambda_m>0$ such that
  $
    \lambda_1 v_1+\cdots+\lambda_m v_m = 0.
  $
\end{enumerate}
\end{lemma}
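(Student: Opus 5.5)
The plan is to prove the two implications separately, using only elementary convexity; the substantive direction is (i)$\Rightarrow$(ii).

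\emph{(ii)$\Rightarrow$(i).} The inclusion $\pos(v_1,\dots,v_m)\subseteq\lin(v_1,\dots,v_m)$ always holds. For the reverse inclusion it suffices to show $-v_i\in\pos(v_1,\dots,v_m)$ for every $i$. Indeed, the positive hull is closed under addition and nonnegative scaling, so it then contains every real linear combination. Given $\lambda_1,\dots,\lambda_m>0$ with $\sum_j\lambda_jv_j=0$, we can write
\[
-v_i=\lambda_i^{-1}\sum_{j\neq i}\lambda_j v_j ,
\]
which is a nonnegative combination of the generators.

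\emph{(i)$\Rightarrow$(ii).} If $\pos=\lin$, then for each $i$ we have $-v_i\in\pos(v_1,\dots,v_m)$. So there are coefficients $\mu^{(i)}_j\ge 0$ with $-v_i=\sum_j\mu^{(i)}_jv_j$. Rearranging gives
\[
\sum_j c^{(i)}_jv_j=0,\qquad c^{(i)}_j:=\mu^{(i)}_j+\delta_{ij}.
\]
Here every $c^{(i)}_j\ge 0$ and $c^{(i)}_i\ge 1$. Summing these $m$ relations over $i$ gives $\sum_j\lambda_jv_j=0$ with $\lambda_j:=\sum_i c^{(i)}_j\ge c^{(j)}_j\ge1>0$, which is (ii).

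The degenerate cases should be mentioned explicitly. If $m=0$, condition (ii) is vacuous and both hulls equal $\{0\}$ by the paper's convention. Zero vectors among the $v_j$ cause no difficulty in either direction.

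I expect no real obstacle. The only point needing care is that a single relation obtained from $-v_i\in\pos$ may have zero coefficients on some generators. This is exactly why the argument sums one relation per index $i$, producing a relation whose coefficients are all strictly positive.
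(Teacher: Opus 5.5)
Your proposal is correct and follows essentially the same argument as the paper. For (i)$\Rightarrow$(ii) you write $-v_i$ as a nonnegative combination for each $i$ and sum the $m$ relations to get coefficients $\lambda_j\ge 1$; for (ii)$\Rightarrow$(i) you express $-v_k=\sum_{j\neq k}(\lambda_j/\lambda_k)v_j\in\pos(v_1,\dots,v_m)$. Your remark on the degenerate case $m=0$ is a small addition the paper leaves implicit.
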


\begin{proof}
(i)$\Rightarrow$(ii). Assume~(i). Then for each $k\in[m]$ we have
$-v_k\in\lin(v_1,\dots,v_m)=\pos(v_1,\dots,v_m)$, so there exist coefficients
$\mu_{1;k}\geq 0,\dots,\mu_{m;k}\geq 0$ with
\[
  -v_k = \sum_{j=1}^m \mu_{j;k}\, v_j.
\]
Summing these identities over $k=1,\dots,m$ and rearranging yields $\sum_{j=1}^m \lambda_j v_j = 0$ with $\lambda_j := 1+\sum_{k=1}^m \mu_{j;k} \geq 1$. Hence $\lambda_1>0,\dots,\lambda_m>0$, proving (ii).

\vspace*{2mm}
\noindent
(ii)$\Rightarrow$(i). Assume $\sum_{j=1}^m \lambda_j v_j=0$ with all $\lambda_j>0$. Then for every $k\in[m]$ we have
\[
  -v_k = \sum_{j: j\neq k}(\lambda_j/\lambda_k) \, v_j \in \pos(v_1,\dots,v_m),
\]
implying $\pm v_k\in\pos(v_1,\dots,v_m)$ for every $k$. Therefore
$\lin(v_1,\dots,v_m)\subseteq \pos(v_1,\dots,v_m)$. The reverse inclusion
$\pos(v_1,\dots,v_m)\subseteq \lin(v_1,\dots,v_m)$ always holds, proving~(i).
\end{proof}

The next lemma makes precise how faces of the cone $\pos(a_1,\dots,a_n)$ correspond to
strictly positive dependent subsets (those for which $\pos = \lin$) in a Gale-dual configuration. It can be found in~\cite[Theorem~1 on p.~167]{Shephard1971DiagramsPositiveBases} or~\cite[2A11 on p.~95]{mcmullen_transforms}.

\begin{lemma}[Faces vs.\ $\pos=\lin$]\label{lem:faces-gale-dual}
Let $d',d''\in\mathbb N$ and put $n:=d'+d''$. Let $a_1,\dots,a_n\in\mathbb R^{d'}$ and
$b_1,\dots,b_n\in\mathbb R^{d''}$ be in linear Gale duality, and set $C_A:=\pos(a_1,\dots,a_n)$.
For $I\subseteq[n]$, the following are equivalent:
\begin{enumerate}
  \item[(i)] There exists a face $F$ of $C_A$ such that  $I = \{i\in [n]: a_i\in F\}$.
  \item[(ii)] $\pos(b_j:j\in I^c)=\lin(b_j:j\in I^c)$. Here $I^c := [n] \bsl I$ is the complement of $I$.
\end{enumerate}
%Moreover, if $I\neq[n]$, then the face $F$ in \emph{(i)} is automatically proper.
\end{lemma}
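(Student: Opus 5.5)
The plan is to reduce everything to linear algebra on the two orthogonal complementary row spaces $\Row(A)$ and $\Row(B)=\Row(A)^\perp$ in $\R^n$. The basic dictionary is as follows. A vector $y\in\R^{d'}$ gives the vector $A^\top y=(\langle y,a_1\rangle,\dots,\langle y,a_n\rangle)\in\Row(A)$, and $y\mapsto A^\top y$ is a bijection onto $\Row(A)$ since $\rank A=d'$. Likewise, linear dependences $\sum_j\lambda_j b_j=0$ correspond exactly to vectors $\lambda\in\Ker B=\Row(B)^\perp=\Row(A)$. Hence the vectors $\lambda\in\R^n$ with $\sum_j\lambda_jb_j=0$ are precisely the vectors $(\langle y,a_i\rangle)_{i\in[n]}$ for some $y\in\R^{d'}$.

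For (i)$\Rightarrow$(ii), let $F$ be a face with $I=\{i:a_i\in F\}$. Since $C_A$ is polyhedral, there is a supporting linear functional $y$ with $\langle y,x\rangle\ge 0$ on $C_A$ and $F=C_A\cap y^\perp$ (take $y=0$ if $F=C_A$). Then $\langle y,a_i\rangle=0$ for $i\in I$ and $\langle y,a_j\rangle>0$ for $j\in I^c$. By the dictionary, $\lambda:=A^\top y$ is a dependence of the $b$'s supported exactly on $I^c$ with strictly positive entries there. So $\sum_{j\in I^c}\lambda_j b_j=0$ with all $\lambda_j>0$, and Lemma~\ref{lem:pos_eq_lin_hull} gives $\pos(b_j:j\in I^c)=\lin(b_j:j\in I^c)$. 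The case $I^c=\varnothing$ is trivial.

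For (ii)$\Rightarrow$(i), Lemma~\ref{lem:pos_eq_lin_hull} provides $\lambda_j>0$ for $j\in I^c$ with $\sum_{j\in I^c}\lambda_jb_j=0$. Extend $\lambda$ by zeros on $I$ to get $\lambda\in\Ker B=\Row(A)$. Write $\lambda=A^\top y$. Then $\langle y,a_i\rangle=0$ for $i\in I$ and $>0$ for $j\in I^c$. Thus $y$ is nonnegative on $C_A$, so $F:=C_A\cap y^\perp$ is a face of $C_A$. Moreover, $a_i\in F$ exactly when $i\in I$, which is (i).

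The main subtlety, rather than an obstacle, is the existence in the first direction of a supporting functional that is \emph{strictly} positive on every generator outside $F$. I would justify it by the standard fact that every face of a polyhedral cone is exposed. Concretely, take $y$ in the relative interior of the normal cone of $F$, and use that a generator $a_j\notin F$ cannot lie in $y^\perp\cap C_A=F$. One should also note the conventions for $F=\{0\}$ and $F=C_A$. I would also remark that this lemma then yields Lemma~\ref{lem:faces-positive-spanning-intro-version}: under general position with $\#I\le d'-1$, the condition $\pos=\lin$ for the $\ge d''+1$ vectors $b_j$, $j\in I^c$, means their span is all of $\R^{d''}$, and the $a_i$, $i\in I$, are linearly independent, so $F=\pos(a_i:i\in I)$.
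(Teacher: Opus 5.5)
Your proof is correct and follows the paper's argument. Both directions reduce, via $\Row(A)=\Row(B)^\perp=\Ker B$, to the existence of a vector that vanishes on $I$ and is strictly positive on $I^c$, using Lemma~\ref{lem:pos_eq_lin_hull} on the $b$-side and an exposing functional on the $a$-side; your justification that every face of the polyhedral cone is exposed, so that generators outside $F$ receive strictly positive values, fills in the step the paper states without comment.
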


\begin{proof}
Let $A\in\mathbb R^{d'\times n}$ and $B\in\mathbb R^{d''\times n}$ be the matrices with columns
$a_1,\dots,a_n$ and $b_1,\dots,b_n$, respectively.
%By linear Gale duality we have
%$\rank(A)=d'$, $\rank(B)=d''$, and $AB^\top=0$, hence $\Row(A)\perp \Row(B)$.
%Since $\dim\Row(A)+\dim\Row(B)=d'+d''=n$, it follows that
%\begin{equation}\label{eq:row-orth-complements}
%\Row(A)=\Row(B)^\perp.
%\end{equation}

\smallskip
\noindent
\emph{Step 1.}
Every face of $C_A$ is of the form $F=\{y\in C_A:\langle u,y\rangle=0\}$ for some $u\in\mathbb R^{d'}$ such that $\langle u,y\rangle\ge 0$ for all $y\in C_A$. (When $F= C_A$, we may take $u=0$.) Thus, (i) holds if and only if there exists $u\in\mathbb R^{d'}$ such that
\[
\langle u,a_i\rangle=0 \quad  (\text{for } i\in I)
\qquad\text{and}\qquad
\langle u,a_j\rangle>0 \quad (\text{for } j\in I^c).
\]
With $x:=u^\top A\in\mathbb R^n$,  (i) is equivalent to the existence of a vector
\begin{equation}\label{eq:x-in-rowA}
x\in\Row(A)\quad\text{ with }\quad  x_i=0 \quad  (\text{for } i\in I), \quad  x_j>0 \quad (\text{for }j\in I^c).
\end{equation}
%(When $I=[n]$, one may take $u=0$, $x=0$, and $F=C_A$.)

\smallskip\noindent
\emph{Step 2.}
Apply Lemma~\ref{lem:pos_eq_lin_hull} to the vectors $b_j, j\in I^c$. Then (ii) is equivalent to the existence of
coefficients $x_j>0$ for $j\in I^c$ such that $\sum_{j\in I^c} x_j b_j=0$. Extending by $x_i:=0$ for $i\in I$ and defining $x= (x_1,\ldots, x_n)$, we see that (ii) is equivalent to the existence of a vector
\begin{equation}\label{eq:x-in-rowB}
x\in \Row(B)^\perp \quad\text{ with }\quad  x_i=0 \quad  (\text{for } i\in I), \quad  x_j>0 \quad (\text{for }j\in I^c).
\end{equation}

\smallskip\noindent
\emph{Step 3.}
By~\eqref{eq:Gale_duality_row_spaces_orthogonal}, $\Row(A) = \Row(B)^\perp$, proving the equivalence of~\eqref{eq:x-in-rowA} and~\eqref{eq:x-in-rowB}.   Hence (i) and (ii) are equivalent.
%Finally, if $I\neq[n]$, then $I^c\neq\varnothing$, so the condition $a_i\in F \Leftrightarrow i\in I$
%forces $F\neq C_A$, i.e.\ $F$ is proper.
\end{proof}

%\begin{lemma}
%Let $a_1,\dots,a_n \in \mathbb{R}^{d'}$ and $b_1,\dots,b_n \in \mathbb{R}^{d''}$ be in linear Gale duality. Let $I\subseteq [n]$ be a set. Then the following are equivalent:
%\begin{itemize}
%\item [(i)] There is a face $F$ of the cone $\pos (a_1,\ldots, a_n)$ such that $a_i \in F$ if and only if $i\in I$.
%\item[(ii)] $\pos (b_j : j\in I^c) = \lin (b_j : j\in I^c)$.
%\end{itemize}
%\end{lemma}
%\begin{proof}
%A proper face of $C_A:= \pos (a_1,\ldots, a_n)$ is an intersection of $C_A$ with a supporting hyperplane. Thus, (i) is equivalent to: There is a vector $u\in \R^{d'}$, $u\neq 0$, such that $\langle u, a_i\rangle = 0$ for all $i\in I$ and $\langle u, a_i\rangle > 0$  for all $i\in I^c$. Equivalently, the row space of $A$ contains a vector  $x=(x_1,\ldots, x_{n})\in \R^n$ such that $x_i = 0$ for $i\in I$ and $x_j >0$ for $j\in I^c$.
%
%On the other hand, $\pos (b_j : j\in I^c) = \lin (b_j : j\in I^c)$ is equivalent to: $0$ can be represented as  a linear combination of $b_j, j\in I^c$, with strictly positive coefficients. Equivalently, there is a vector $x= (x_1,\ldots, x_n)\in \R^n$ with $x_i= 0$ for $i\in I$ and $x_j >0$ for $j\in I^c$ such that $x$ is orthogonal to the row space of $B$.
%
%Since the row spaces of $A$ and $B$ are orthogonal complements of each other, we conclude that (i) is equivalent to (ii).
%\end{proof}

Recall that pairwise distinct vectors $v_1,\dots,v_m\in\R^d$, where $m\geq d$, are in general linear position if every $d$-element subset of $\{v_1,\dots,v_m\}$ is linearly independent.

\begin{lemma}\label{lem:gen_lin_pos_under_gale_duality}
Let $d',d''\in\mathbb N$ and put $n:=d'+d''$. Let $a_1,\dots,a_n\in\mathbb R^{d'}$ and
$b_1,\dots,b_n\in\mathbb R^{d''}$ be in linear Gale duality. If one of the configurations is in general linear position, then the other configuration is in general linear position as well.
\end{lemma}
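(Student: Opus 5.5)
The plan is to translate general linear position into a statement about coordinate subspaces of $\R^n$ and use the relation $\Row(A)=\Row(B)^\perp$ from \eqref{eq:Gale_duality_row_spaces_orthogonal}. By symmetry of the definition of linear Gale duality (the roles of $A$ and $B$ can be swapped), it suffices to assume that $a_1,\dots,a_n$ are in general linear position and to show the same for $b_1,\dots,b_n$. We interpret general linear position through the $d$-subset criterion of the remark. For $B$, this means every $d''$ columns $b_j$, $j\in J$, are linearly independent. Pairwise distinctness is then automatic when $d''\ge 2$; the case $d''=1$ has to be checked separately, and we postpone it.

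The key step is the following equivalence. For $J\subseteq[n]$ with $\#J=d''$, put $I:=J^c$, so that $\#I=d'$. We claim
\[
(b_j)_{j\in J}\ \text{linearly independent}
\iff
\Row(B)^\perp\cap\{x\in\R^n: x_i=0\ \forall i\in I\}=\{0\}
\iff
(a_i)_{i\in I}\ \text{linearly independent}.
\]
The first equivalence holds because a linear dependence $\sum_{j\in J}x_jb_j=0$ is the same thing as a vector $x\in\Ker B=\Row(B)^\perp$ supported on $J$. For the second, use $\Row(B)^\perp=\Row(A)$ and consider the linear map
\[
\R^{d'}\to\R^n,\qquad u\mapsto u^\top A .
\]
It is injective because $\rank A=d'$, and its image is $\Row(A)$. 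Hence the intersection is trivial exactly when no nonzero $u$ satisfies $\langle u,a_i\rangle=0$ for all $i\in I$. That is, the $d'\times d'$ matrix with columns $a_i$, $i\in I$, is nonsingular, i.e.\ $(a_i)_{i\in I}$ is linearly independent.

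Having this equivalence, let $J$ range over all $d''$-subsets; then $I=J^c$ ranges over all $d'$-subsets. General linear position of the $a$'s therefore gives independence of every $d''$-subfamily of the $b$'s.

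The main obstacle is bookkeeping rather than substance: the pairwise-distinctness clause when $d''=1$, and making sure the "$m\ge d$'' reformulation applies ($n\ge d''$ holds trivially). For $d''=1$, general linear position of the $b_j\in\R$ amounts to all $b_j\neq0$, which the argument above already gives. Pairwise distinctness, however, may genuinely fail in that case: for instance, $b_j=1$ for all $j$ is Gale dual to a general-position configuration. So here I would read the convention as requiring only the independence condition, treating distinctness as part of that condition for $d\ge2$. I would state this convention explicitly in the proof rather than try to force distinctness.
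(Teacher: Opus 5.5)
Your proof is correct and is essentially the paper's argument: a linear dependence among $b_j$, $j\in J$, gives a vector of $\Row(B)^\perp=\Row(A)$ that vanishes on $I=J^c$, and invertibility of the $d'\times d'$ submatrix $A_I$ forces that vector to be zero. Your two-sided equivalence only adds the unneeded converse direction.

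Your caveat about $d''=1$ is also right. The paper's proof likewise only establishes linear independence of every $d''$-subfamily. If pairwise distinctness is built into the definition, the statement fails for $d''=1$, as your example $b_j\equiv 1$ shows. So the independence-only reading you propose is the one under which the lemma holds.
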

\begin{proof}
Assume that $a_1,\dots,a_n$ are in general linear position.
%i.e.\ for every $I\subseteq[n]$
%with $|I|=d'$ the submatrix $A_I$ (consisting of the columns indexed by $I$) is invertible.
Let $J\subseteq[n]$ with $\# J=d''$ and put $I:=J^c$, so $\# I=d'$.
To show that $b_j, j\in J$, are linearly independent, let $x\in\mathbb R^n$ satisfy $x_1b_1 + \ldots +x_n b_n =0$ and $x_i=0$ for all $i\in I$ (so $x$ is supported on $J$). Thus $x\in \Row(B)^\perp$ and, by~\eqref{eq:Gale_duality_row_spaces_orthogonal}, $x\in \Row(A)$. Let $A_I$ be the $(d'\times d')$-submatrix of $A$ consisting of the columns indexed by $I$. Since $\# I = d'$ and $a_1, \ldots, a_n$ are in general position, the matrix $A_I$ is invertible. Now, $x\in \Row(A)$ is a linear combination of the rows of $A$ and  $x_i=0$ for all $i\in I$. On the other hand, the rows of $A_I$ are linearly independent. It follows that $x$ is a \emph{trivial} linear combination of the rows of $A$ and hence $x=0$.  This shows that the columns $b_j, j\in J$, are linearly independent.
\end{proof}

%\begin{lemma}[Faces vs.\ positive spanning under general linear position]
%Let $a_1,\dots,a_n \in \mathbb{R}^{d'}$ and $b_1,\dots,b_n \in \mathbb{R}^{d''}$ be in linear Gale duality and both in general linear position.   Let  $I\subseteq [n]$ be a set.  Then the following are equivalent:
%\begin{itemize}
%\item [(i)] $\pos (a_i : i\in I)$ is a face of the cone $\pos (a_1,\ldots, a_n)$;
%\item[(ii)] $\pos (b_j : j\in I^c) = \R^{d''}$.
%\end{itemize}
%\end{lemma}

Under the additional assumption of general linear position, the face-dependence correspondence
of Lemma~\ref{lem:faces-gale-dual} simplifies and can be phrased in terms of positive spanning
the whole space on the Gale-dual side. We are now ready to prove
Lemma~\ref{lem:faces-positive-spanning-intro-version}, which we restate for convenience.

\begin{lemma}[Faces vs.\ positive spanning subsets under general linear position]\label{lem:faces-positive-spanning}
Let $d',d''\in\mathbb N$ and put $n:=d'+d''$. Let $a_1,\dots,a_n\in\mathbb R^{d'}$ and
$b_1,\dots,b_n\in\mathbb R^{d''}$ be in linear Gale duality, and assume that both
configurations are in general linear position. Let $I\subseteq[n]$ satisfy $\#I\le d'-1$.
Then the following are equivalent:
\begin{enumerate}
  \item[(i)] $\pos(a_i:i\in I)$ is a face of $C_A:=\pos(a_1,\dots,a_n)$.
  \item[(ii)] $\pos(b_j:j\in I^c)=\mathbb R^{d''}$. Here $I^c := [n] \bsl I$ is the complement of $I$.
\end{enumerate}
\end{lemma}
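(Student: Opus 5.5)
The plan is to reduce the statement to Lemma~\ref{lem:faces-gale-dual} and use general linear position twice, once on each side of the duality.

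Suppose first that~(i) holds. Put $F:=\pos(a_i:i\in I)$, a face of $C_A$. To apply Lemma~\ref{lem:faces-gale-dual} I must check that $\{i\in[n]:a_i\in F\}=I$. Since $\#I\le d'-1$, the vectors $a_i$, $i\in I$, are linearly independent, so $\lin F$ has dimension $\#I\le d'-1$. If some $a_j$ with $j\notin I$ lay in $F\subseteq\lin(a_i:i\in I)$, then the $\#I+1\le d'$ vectors $\{a_i:i\in I\}\cup\{a_j\}$ would be linearly dependent. This contradicts general linear position. Lemma~\ref{lem:faces-gale-dual} then gives $\pos(b_j:j\in I^c)=\lin(b_j:j\in I^c)$. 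Because $\#I^c\ge d''+1\ge d''$, general linear position of the $b$'s gives $\lin(b_j:j\in I^c)=\R^{d''}$, which proves~(ii).

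Conversely, suppose~(ii) holds. Then $\pos(b_j:j\in I^c)=\R^{d''}=\lin(b_j:j\in I^c)$, and Lemma~\ref{lem:faces-gale-dual} yields a face $F$ of $C_A$ with $\{i:a_i\in F\}=I$. The remaining point is to show $F=\pos(a_i:i\in I)$.

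\begin{itemize}
\item Every face of $C_A=\pos(a_1,\dots,a_n)$ is generated by the generators it contains, so $F=\pos(a_i:a_i\in F)=\pos(a_i:i\in I)$. The justification is standard: if $y\in F$ is written as $y=\sum\lambda_ja_j$ with $\lambda_j\ge0$, then the supporting functional $u$ satisfies $0=\langle u,y\rangle=\sum\lambda_j\langle u,a_j\rangle$ with each summand $\ge0$, so $\lambda_j=0$ whenever $a_j\notin F$.
\item This shows that $\pos(a_i:i\in I)$ is a face, which is~(i).
\end{itemize}

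I expect no serious obstacle. The only delicate points are the bookkeeping that the face $F$ contains no extra generators, which needs $\#I\le d'-1$ together with general position, and the degenerate case $I=\varnothing$. For $I=\varnothing$ the same argument works with $F=\{0\}$, since no $a_j$ equals $0$. Also, $\lin(b_j:j\in I^c)=\R^{d''}$ uses $\#I^c\ge d''$, which holds because $\#I\le d'$. General linear position of the $b$'s follows from that of the $a$'s by Lemma~\ref{lem:gen_lin_pos_under_gale_duality}, but it is assumed in the statement anyway.
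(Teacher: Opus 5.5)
Your proposal is correct and follows the same route as the paper. Both directions are reduced to Lemma~\ref{lem:faces-gale-dual}; general linear position shows that $\pos(a_i:i\in I)$ contains no generator $a_k$ with $k\notin I$ and that $\lin(b_j:j\in I^c)=\R^{d''}$; and the face property identifies $F$ with $\pos(a_i:i\in I)$. The only difference is cosmetic: the paper treats $I=\varnothing$ as a separate case, whereas your argument covers it uniformly, which is valid since no $a_j$ is $0$.
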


\begin{proof}
%Set $C_A:=\pos(a_1,\dots,a_n)$.
We first consider the case $I=\varnothing$. Then $\pos(a_i:i\in I)=\{0\}$, so (i) says that
$\{0\}$ is a face of $C_A$.  Recall that $a_1,\ldots, a_n$ are in general linear position, in particular $a_i\neq 0$ for all $i\in[n]$.
Hence $\{0\}$ is a face of $C_A$ if and only if there exists a face $F$ of $C_A$ containing none
of the generators $a_1,\dots,a_n$.
Thus we may apply Lemma~\ref{lem:faces-gale-dual} with $I=\varnothing$,  which gives that (i) is equivalent to
\[
\pos(b_1,\ldots, b_n)=\lin(b_1,\ldots, b_n).
\]
By Definition~\ref{def:gale_duality_linear}, we have $\lin(b_1,\ldots, b_n)=\mathbb R^{d''}$.  Thus (i) is equivalent to  $\pos(b_1,\dots,b_n)=\mathbb R^{d''}$, which is exactly (ii). Hence the lemma holds for
$I=\varnothing$.

\smallskip\noindent
From now on  assume $1\le \# I\le d'-1$.

\smallskip\noindent
\emph{Claim.} If $a_1,\ldots, a_n$ are in general linear position and $I\subseteq [n]$ satisfies $1\le \# I\le d'-1$, then
\begin{equation}\label{eq:no-extra-generators-2}
\{k\in[n]: a_k\in \pos(a_i:i\in I)\}=I.
\end{equation}

\smallskip\noindent
\emph{Proof of the claim.} Clearly $I$ is contained in the set on the left. Conversely, let
$k\in[n]$ be such that $a_k\in\pos(a_i:i\in I)$. Then
$a_k\in\lin(a_i:i\in I)$.  If $k\notin I$, this gives a non-trivial linear dependence among the $\#I+1\le d'$ vectors
$a_i, i\in I \cup \{k\}$. Since $a_1,\ldots, a_n$ are in general linear position, every set of at
most $d'$ vectors is linearly independent, a contradiction. This proves~\eqref{eq:no-extra-generators-2}.
\qed

\smallskip\noindent
\emph{(i)$\Rightarrow$(ii).}
Assume $F:=\pos(a_i:i\in I)$ is a face of $C_A$. By the claim, $I = \{i\in [n]: a_i\in F\}$.  Hence Lemma~\ref{lem:faces-gale-dual} yields
\[
\pos(b_j:j\in I^c)=\lin(b_j:j\in I^c).
\]
Since $\#I\le d'-1$, we have $\# (I^c)=n-\#I\ge d''+1$. Since  $b_1,\ldots, b_n$ are in general linear
position, $\lin(b_j:j\in I^c)=\mathbb R^{d''}$. Therefore $\pos(b_j:j\in I^c)=\mathbb R^{d''}$, which is (ii).

\smallskip\noindent
\emph{(ii)$\Rightarrow$(i).}
Assume (ii). Then in particular $\pos(b_j:j\in I^c)= \lin(b_j:j\in I^c)$, so by
Lemma~\ref{lem:faces-gale-dual} there exists a face $F$ of $C_A$ such that $\{i\in [n]: a_i \in F\} = I$.
In particular, $\pos(a_i:i\in I)\subseteq F$.
For the reverse inclusion, let $x\in F\subseteq C_A$ and write $x=\sum_{k=1}^n \lambda_k a_k$ with $\lambda_k\ge 0$.
If $\lambda_k>0$ for some $k$, then $a_k\in F$ (because $F$ is a face of the cone $C_A$), hence
$k\in I$ recalling that $\{i\in [n]: a_i \in F\} = I$. Therefore $x$ is a nonnegative linear combination of
$a_i,i\in I$, i.e.\ $x\in \pos(a_i:i\in I)$. This shows $F\subseteq \pos(a_i:i\in I)$. Combining both inclusions yields $F=\pos(a_i:i\in I)$, so $\pos(a_i:i\in I)$ is a face of $C_A$, proving~(i).
\end{proof}

%By~\eqref{eq:no-extra-generators-2}, no generator $a_k$ with $k\notin I$ lies in $\pos(a_i:i\in I)$.

%%%Original paper of Gale: \cite{gale_neighboring_vertices}.
%%%Some information also in: \cite{buchstaber_panov_book_toric_topology}
%Linear Inequalities and Related Systems. G. B. DANTZIG

\section*{Acknowledgement}
This paper benefited from many helpful and illuminating discussions with ChatGPT, which sharpened both the perspective and presentation; the work would have been impossible without these exchanges.
Supported by the German Research Foundation under Germany's Excellence Strategy  EXC 2044/2 -- 390685587, \textit{Mathematics M\"unster: Dynamics - Geometry - Structure} and by the DFG priority program SPP 2265 \textit{Random Geometric Systems}.

\bibliography{rand_poly_cones_1_bib}
\bibliographystyle{plainnat}

\end{document}